\numberwithin{equation}{section}
\renewcommand{\section}{\@startsection{section}{1}{0pt}{20pt}{6pt}{\large\bf}}
\renewcommand{\@seccntformat}[1]{\csname the#1\endcsname.\ }
\def\ni{\noindent}
\renewcommand{\epsilon}{\varepsilon}
\newcommand{\spint}[4]{\int_{#1}^{#2} \! #3 \, \mathrm{d} #4}
\newcommand{\rplusr}[1][]{\mathbb{R}_+ \times \mathbb{R}^{#1}}
\newcommand{\rplustor}{\mathbb{R}_+ \rightarrow \mathbb{R}}
\newcommand{\rtor}{\mathbb{R} \rightarrow \mathbb{R}}
\newcommand{\rplusrtor}[1][]{\mathbb{R}_+ \times \mathbb{R}^{#1} \rightarrow \mathbb{R}}
\newcommand{\indic}[1]{\, \mathbbm{1}_{\{ #1 \}}}
\newcommand{\lt}[2]{\ell^{#1}_{#2}}
\newcommand{\ltint}[6]{\int_{#1}^{#2} \!\! \spint{#3}{#4}{#5}{\lt{x}{s} #6}}
\newcommand{\sltint}[1]{\ltint{0}{T}{\mathbb{R}}{}{#1}{}}
\newtheorem{theorem}{Theorem}[section]
\newtheorem{lemma}[theorem]{Lemma}
\newtheorem{remark}[theorem]{Remark}
\newtheorem{definition}[theorem]{Definition}
\begin{document}

\title{\bf The Local Time-Space Integral and \\Stochastic Differential Equations}
\author{Daniel Wilson}
\date{}
\maketitle

%%%%%%%%%%%%%%%%%%%%%%%%%%%%%%%%%%%%%%%%%%%%%%%%%%%%%%%%%%%%%%%%%%%%%%%%%%%%%%%
%%% Research Report %%%
%%%%%%%%%%%%%%%%%%%%%%%%%%%%%%%%%%%%%%%%%%%%%%%%%%%%%%%%%%%%%%%%%%%%%%%%%%%%%%%

%{\par \vspace{-15pc} \noindent \footnotesize
%
%\vspace{13.5pc} \par}

%%%%%%%%%%%%%%%%%%%%%%%%%%%%%%%%%%%%%%%%%%%%%%%%%%%%%%%%%%%%%%%%%%%%%%%%%%%%%%%
%%% Abstract %%%
%%%%%%%%%%%%%%%%%%%%%%%%%%%%%%%%%%%%%%%%%%%%%%%%%%%%%%%%%%%%%%%%%%%%%%%%%%%%%%%

{\par \leftskip=2.1cm \rightskip=2.1cm \footnotesize

\noindent Processes which arise as solutions to stochastic differential equations involving the local time (SDELTs), such as skew Brownian motion, are frequent sources of inspiration in theory and applications. Existence and uniqueness results for such equations rely heavily on the It\^o-Tanaka formula. Recent interest in time-inhomogeneous SDELTs indicates the need for comprehensive existence and uniqueness results in the time-dependent case, however, the absence of a suitable time-dependent It\^o-Tanaka formula forms a major barrier. Rigorously developing a two-parameter integral with respect to local time, known as the local time-space integral, we connect together and extend many known formulae from the literature and establish a general time-dependent It\^o-Tanaka formula. Then, we prove the existence of a unique strong solution for a large class of time-inhomogeneous SDELTs.
\par}

%%%%%%%%%%%%%%%%%%%%%%%%%%%%%%%%%%%%%%%%%%%%%%%%%%%%%%%%%%%%%%%%%%%%%%%%%%%%%%%
%%% Sections %%%
%%%%%%%%%%%%%%%%%%%%%%%%%%%%%%%%%%%%%%%%%%%%%%%%%%%%%%%%%%%%%%%%%%%%%%%%%%%%%%%

\section{Introduction}
%%%%%%%%%%%%%%%%%%%%%%
In \cite{legall83}, Le Gall provided a general treatment of stochastic differential equations involving the local time (SDELTs), of the form
\begin{equation} \label{sdeltintro1}
X_t = X_0 + \spint{0}{t}{\sigma(X_s)}{B_s} + \spint{\mathbb{R}}{}{\lt{a}{t}(X)}{\nu(a)}.
\end{equation}
To leverage classical existence and uniqueness results, he applied a transformation to remove the local time term which uses the It\^o-Tanaka formula. Namely, for a continuous semimartingale $X$ and a function $F$ which is the difference of two convex functions, we have
\begin{equation} \label{ito-tanaka}
F(X_t) = F(X_0) + \spint{0}{t}{F'_-(X_s)}{X_s} + \frac12 \spint{\mathbb{R}}{}{\lt{a}{t}}{F'_-(a)},
\end{equation}
where $\mathrm{d}F'_-$ is the Lebesgue-Stieltjes measure associated to the left derivative of $F$. 

Inspired by the recent work of \'Etor\'e and Martinez \cite{etore18}, we consider SDELTs with time-dependent coefficients of the form
\begin{equation} \label{sdeltintro2}
X_t = X_0 + \spint{0}{t}{b(s,X_s)}{s} + \spint{0}{t}{\sigma(s,X_s)}{B_s} + \spint{\mathbb{R}}{}{\,\spint{0}{t}{h(s,a)}{_s \lt{a}{s}(X)}}{\nu(a)}.
\end{equation}
The corresponding transformation which removes the local time component is time-dependent, and hence the It\^o-Tanaka formula no longer applies. Many authors have continued the work of It\^o and Tanaka by developing correction terms involving the local time for time-dependent functions, but none are quite sufficient for this purpose. Notably however, Eisenbaum \cite{eisenbaum00} and Ghomrasni and Peskir \cite{ghomrasni03} derived a change of variables formula of the form
\begin{equation}  \label{ghompeskintro}
F(t,X_t) - F(0,X_0) = \spint{0}{t}{F_t(s,X_s)}{s} + \spint{0}{t}{F_x(s,X_s)}{X_s} - \frac12 \spint{0}{t}{\int_{\mathbb{R}} F_x(s,a)}{\lt{a}{s}},
\end{equation}
where the final term is an expression which aims to unify correction terms depending on the local time, the so-called `local time-space integral'. It was pointed out by Ghomrasni and Peskir that formal manipulations of this final term yield some interesting and useful formulae, including many time-dependent extensions of the It\^o-Tanaka formula from the literature. As a final remark, they state that `it is an interesting problem to establish these formulas rigorously under natural conditions'. As the local time is of unbounded variation in the $a$ variable, the Lebesgue-Stieltjes construction cannot be applied, and other methods are required. 

The aim of this paper is twofold. Firstly, we construct the local time-space integral by extension from simple functions in analogy with the construction of the stochastic integral for left-continuous integrands. This allows us to rigorously establish the formal manipulations of Peskir and Ghomrasni, and go beyond them to obtain new results. Secondly, we use these tools to establish existence and uniqueness results for a class of stochastic differential equations involving the local time (SDELTs), of the form (\ref{sdeltintro2}).

In Section 2, various results of other authors on change of variables including the local time are summarised. In Section 3, beginning from simple functions and employing discrete integration by parts, we demonstrate three representations for the final term in (\ref{ghompeskintro}). The representation
\begin{equation}
\spint{0}{t}{\int_{\mathbb{R}} F_x(s,a)}{\lt{a}{s}} = \spint{\mathbb{R}}{}{H(T,a)}{_a \lt{a}{T}} - \int_0^{T-} \left( \spint{\mathbb{R}}{}{ h(u,a) }{_a \lt{a}{u}} \right) \, \mathrm{d} \mu(u)
\end{equation} has appeared before in the special case when $\mu$ is Lebesgue measure, but the generality here is new. The representation 
\begin{equation} 
\spint{0}{t}{\int_{\mathbb{R}} F_x(s,a)}{\lt{a}{s}} = - \int_\mathbb{R} \spint{0}{T}{g(u,a)}{_u \lt{a}{u}} \, \mathrm{d} \nu(a). \end{equation}
is well known in the case of Lebesgue measure, and has been established by Eisenbaum \cite{eisenbaum00} in the case of Brownian motion, and by Peskir \cite{peskir05} for the case of a Dirac measure. Here we treat semimartingales and general Radon measures. 

In Section 4 we develop a change of variables formula, which takes the form (\ref{ghompeskintro}). In Section 5, we consider the local time-space integral as an operator, and look at properties of the map $(H,X) \mapsto \int_\mathbb{R} \spint{0}{T}{H(s,a)}{ \lt{a}{s}(X)}$. Importantly the final term in (\ref{ghompeskintro}) is of bounded variation in many cases, meaning it provides the Doob-Meyer semimartingale decomposition of $F(X)$ in analogy with the classical It\^o and It\^o-Tanaka formulae. Theorem \ref{ctsinintegrand}, giving sufficient conditions for a type of dominated convergence theorem, is new but specific to our approach. Theorem \ref{ctsinsemimg} is new to the author's knowledge, proving continuity as a function of the semimartingale. To provide a sufficient condition for this, we also derive a condition for convergence of the local time of a sequence of semimartingales in probability in Lemma \ref{convergenceoflt}, namely when
\begin{equation}
\lt{a}{t}(X^n) \xrightarrow{\enskip\mathbb{P}\enskip} \lt{a}{t}(X) \text{ as } n \to \infty.
\end{equation} 
Again, this is new to the author's knowledge. A natural condition on the sequence of semimartingales is uniformity in $\mathcal{H}^1$, which we introduce in Definition \ref{localh1}.

In Section 6, we demonstrate several results which show how the local time-space calculus can be applied. Theorem \ref{ltcformula} was proved and applied by Peskir in \cite{peskiramerican05} to prove the uniqueness of the optimal stopping boundary for the American put option with finite time horizon. Here we derive it quickly using the local time-space calculus. Theorem \ref{ghomrasnithm} was established by Ghomrasni \cite{ghomrasni10}, along with other results, and we repeat it in our setting with the same method of proof. Theorem \ref{prottersanmartin} was previously established directly by Protter and San Mart\'in \cite{protter93}, but we may derive the result using local time-space calculus and an application of Theorem \ref{ctsinsemimg}, showing the power of this approach.

Finally in Section 7, we provide a brief overview of SDELTs of the form (\ref{sdeltintro2}), before stating and proving results for weak existence and pathwise uniqueness for such equations. This provides the existence of a unique strong solution by the well-known method of Yamada-Watanabe. Lemma \ref{ltlemma} is of independent interest, giving an expression for the local time of a time-dependent function of a semimartingale, taking the form
\begin{equation}
\lt{F(t,a)}{t}(Y) = \spint{0}{t}{F_{x}(s,a)}{_s \lt{a}{s}(X)},
\end{equation}
where $Y = F(t,X_t)$.

\section{Local time and correction terms} \label{ltctsection}
%%%%%%%%%%%%%%%%%%%%%%%%%%%%%%%%%%%%
This section gives an overview of local time integral terms which are relevant to the current work. We fix throughout a continuous semimartingale $X = (X_t)_{t \geq 0 }$ on a filtered probability space $(\Omega,\mathcal{F},(\mathcal{F}_t)_{t \geq  0},\mathbb{P})$. All functions are assumed to be Borel measurable. Following \cite{revuz04}, we define the right local time of $X$ as the almost-sure limit
\begin{equation} \label{localtimelimit} \lt{a}{t}(X) = \lim_{\epsilon \to 0} \, \frac{1}{\epsilon} \spint{0}{t}{\mathbbm{1}_{ \{ a \, \leq \, X_s  <  \,a + \epsilon \}  }}{\! \left< X,X \right>_s}, \end{equation}
for each $(t,a)   \in  \mathbb{R}_+ \! \times \mathbb{R}$. This coincides with its definition by means of the Tanaka formula 
\begin{equation}
L^a_t = |X_t - a| - |X_0 - a| - \int_0^t \, \text{sgn}(X_s - a) \, \mathrm{d} X_s,
\end{equation}
where we define the signum function
\begin{equation}
\text{sgn}(x) = \left\{ \begin{array}{cc}
1 & x > 0, \\
-1 & x \leq 0.
\end{array} \right.
\end{equation}
This field has a modification which is jointly continuous in $t$ and right continuous with left limits in $a$, which we always use. The time-dependent occupation time formula
\begin{equation} \label{otf}
\spint{0}{t}{G(s,X_s)}{\! \left< X,X \right>_s} = \, \spint{\mathbb{R}}{}{\spint{0}{t}{G(s,a)}{_s \lt{a}{s}}}{a},
\end{equation}
which holds for all bounded Borel measurable $G : \rplusrtor$, follows by the monotone class theorem after comparing It\^o's formula and the It\^o-Tanaka formula (\ref{ito-tanaka}). 

One may also use the right or symmetric local time by replacing the indicator in (\ref{localtimelimit}) by the right or symmetric form. This corresponds by replacing the signum function above by the obvious right continuous or symmetrically continuous alternative. The implications of this replacement are mentioned where relevant. For more details on the local time for continuous semimartingales, the reader may consult \cite{revuz04}.

A main strategy of proof involves the concept of weak convergence. This is more properly called weak* convergence, to use functional analytic terminology. Despite the fact that our measures are often signed measures rather than probability measures, we retain the probabilistic terminology and say that a sequence of measures $\mu^n$ converges to $\mu$ \textit{weakly} if $\mu^n(f) \to \mu(f)$ for all continuous functions $f$ of compact support.

Given a measure $\mu$, the measure $|\mu|$ denotes the total variation measure of $\mu$. If $\mu$ is defined on a measurable space $(S,\mathcal{S})$, then the total variation of $\mu$ is $|\mu|(S)$. Analogously, the measure $| \mathrm{d}f |$ denotes the total variation measure of $\mathrm{d} f$. The expression $\text{TV}( f )$ denotes the total variation of a function $f$ over its domain, which is equivalent to the total variation of $\mathrm{d} f$ as a measure.

\subsection{Bouleau-Yor}
 The Bouleau-Yor formula was first established in \cite{bouleau81}, but a transparent proof and further comments can be found in \cite{protter04}. Given a function $F \! : \! \rtor$, which is absolutely continuous with locally-bounded derivative $f$, we have that 
\begin{equation} \label{bouleau-yor}
F(X_t) - F(X_0) = \spint{0}{t}{f(X_s)}{X_s} - \frac12 \spint{\mathbb{R}}{}{f(a)}{_a \lt{a}{t}}.
\end{equation}
The final integral is defined as a limit in probability of approximating sums, which we will introduce in an analogous manner to construct the time-dependent version of this formula.

\subsection{Al-Hussaini and Elliot}
Using the Bouleau-Yor formula (\ref{bouleau-yor}) above, Al-Hussaini and Elliot \cite{hussaini87} proved that for a suitably regular function $F:\rplusrtor$, we have
\begin{equation} \label{al-h-elliot}
\begin{split}
F(t,X_t) - F(0,X_0) = &\spint{0}{t}{F_t(s,X_s)}{s} + \spint{0}{t}{F_x(s,X_s)}{X_s} \\ &- \frac12 \spint{\mathbb{R}}{}{F_x(t,a)}{_a \lt{a}{t}} + \frac12 \spint{0}{t}{ \int_{\mathbb{R}} F_{xt}(s,a)}{_a \lt{a}{s}} \, \mathrm{d}s,
\end{split}
\end{equation}
where the $\mathrm{d}_a \lt{a}{s}$ integral is the Bouleau-Yor integral. The final expression is a special case of the one defined in (\ref{densityexp3eqn}). Notably the work \cite{hussaini87} also contains some approximation results for the local time.

\subsection{Eisenbaum}
The formula established by Eisenbaum in \cite{eisenbaum00} is of the same form as (\ref{ghompesk}) below. The construction of the local time-space integral for Brownian motion follows from the representation 
\begin{equation}
\spint{0}{t}{\int_{\mathbb{R}} f(s,a)}{\lt{a}{s}} = \spint{0}{t}{f(s,B_s)}{B_s} - \spint{0}{t}{f(s,B_s)}{^* B_s},
\end{equation}
where the final integral is a backwards stochastic integral. A further interesting result is the representation of the local time-space integral for Brownian motion as an iterated integral with respect to a Radon measure in the same form as (\ref{iterated2}), which stems from the work of Azema, Jeulin, Knight and Yor \cite{azema98}. The idea of approximating by Riemann-type sums, in the same spirit as our construction, is also explored. 

In \cite{Eisenbaum06} and \cite{eisenbaum07}, the above representation is extended to  L\'evy processes (whose jumps are of bounded variation) and reversible semimartingales respectively. The construction of the local time-space integral makes use of the F\"ollmer-Protter-Shiryayev formula given in \cite{follmer95}, which depends on time-reversal of the process and hence cannot easily be generalised. The class of admissible functions $f$ is however particularly broad, and the corresponding change of variables formula requires only the existence of first-order Radon-Nikodym derivatives in each argument.

\subsection{Ghomrasni-Peskir}
The local time-space formula of Ghomrasni and Peskir \cite{ghomrasni03} is fundamentally the same as the one we obtain. If $F\! :  \rplusrtor$ is $C^1$ then
\begin{equation} \label{ghompesk}
F(t,X_t) - F(0,X_0) = \spint{0}{t}{F_t(s,X_s)}{s} + \spint{0}{t}{F_x(s,X_s)}{X_s} - \frac12 \spint{0}{t}{\int_{\mathbb{R}} F_x(s,a)}{\lt{a}{s}}.
\end{equation}
The construction of the final integral makes use of the local time on curves formula, which instead appears in our case as a consequence of the local time-space calculus, Theorem \ref{ltcformula}. The latter part of \cite{ghomrasni03} is devoted to demonstrating, mainly through non-rigorous manipulations, many of the other formulae that appear in this work.

\subsection{Elworthy-Truman-Feng-Zhao}
In \cite{elworthy05}, Elworthy, Truman and Zhao construct an integral of the form (\ref{densityexp3eqn}) below, and prove a corresponding change of variables formula. They then generalise this formula to higher dimensions in \cite{feng07}, using single parameter integrals which contain the components of the process along with their one-dimensional local times. Further, via a generalisation of the multi-parameter Young integral, Feng and Zhao directly construct the integral
\begin{equation}
\int \spint{[0,T] \times \mathbb{R}}{}{H(s,a)}{_{(s,a)}\lt{a}{s}}.
\end{equation}
We refer the reader to \cite{feng06} for this approach, which is not taken up in this work. In \cite{feng08} and \cite{feng10}, the local-time integral in (\ref{bouleau-yor}) is expressed as a rough-path integral, for continuous semimartingales and a certain class of L\'evy processes respectively.

\section{Constructing the local time-space integral}
%%%%%%%%%%%%%%%%%%%%%%%%%%%%%%%%%%%%
Below we will define the local time-space integral. This is motivated by the work of Ghomrasni and Peskir \cite{ghomrasni03}, where it was noticed that an integration-by-parts procedure applied to the formal expression
\begin{equation}
\sltint{F_x(s,x)}
\end{equation}
yields many of the previous expressions given in Section \ref{ltctsection}. We take this as inspiration for the construction, starting from the definition for simple sums and then employing discrete integration by parts and approximation by Riemann-type sums. It is remarkable that the naive equation (\ref{simpledef}), introduced by Eisenbaum \cite{eisenbaum00}, turns out to be the correct definition to unify the other It\^o formulae with local time correction terms.

Let us briefly note that the Bouleau-Yor integral is well defined by means of (\ref{bouleau-yor}), and versions of Fubini's theorem and the dominated convergence theorem for the $d_a \lt{a}{s}$ integral follow from the stochastic and deterministic versions.
\begin{definition} \label{bouleau-yor-def}
Given a locally-bounded function $f: \mathbb{R} \to \mathbb{R}$, we define the Bouleau-Yor integral of $f$ by
\begin{equation} \label{bouleau-yor2}
\spint{\mathbb{R}}{}{f(a)}{_a \lt{a}{t}} = 2 \left[ F(X_t) - F(X_0) - \spint{0}{t}{f(X_s)}{X_s} \right],
\end{equation}
where $F$ is any antiderivative of $f$.
\end{definition}

The fundamental definition of the local time-space integral of a product of indicator functions as introduced by Eisenbaum \cite{eisenbaum00} is
\begin{equation} \label{simpledef} \Lambda(\mathbbm{1}_{ (s,t] } \, \mathbbm{1}_{(x,y]}) = \sltint{\mathbbm{1}_{ (s,t] } \, \mathbbm{1}_{(x,y]}} =\lt{y}{t} - \lt{y}{s} - \lt{x}{t} + \lt{x}{s}. \end{equation}
This is extended to simple functions by linearity. We will use either of these two expressions to denote the local time-space integral. By considering the right-hand side of (\ref{simpledef}) above as a Lebesgue-Stieltjes integral, or Bouleau-Yor integral given by (\ref{bouleau-yor2}), we are led to the following three definitions. If a function $H$ satisfies the conditions of any of these definitions, we say that $H$ is \textit{local time-space integrable} (with respect to the semimartingale $X$).

\begin{definition} \label{densityexp}
Assume that $H \! :[0,T] \times \rtor$ is left continuous in each argument when the other is fixed. Further assume that $t \to H(t,a)$  admits a density $h$ with respect to some Radon reference measure $\mu$. That is 
\begin{equation}
H(t,a) - H(s,a) = \spint{s}{t-}{h(u,a)}{\mu(u)},
\end{equation}
for each $s,t \in [0,T]$ and $a \in \mathbb{R}$. Let $h$ be also left continuous in $a$ for each fixed $u$. Then we define the local time space integral $ \int_{0}^{T} \spint{\mathbb{R}}{}{H(s,x)}{\lt{x}{s}}$ of $H$ to be 
\begin{equation}  \label{densityexpeqn}
\spint{\mathbb{R}}{}{H(T,a)}{_a \lt{a}{T}} - \int_0^{T-} \left( \spint{\mathbb{R}}{}{ h(u,a) }{_a \lt{a}{u}} \right) \, \mathrm{d} \mu(u),
\end{equation}
where the inner integral is the Bouleau-Yor local time integral (\ref{bouleau-yor2}).
\end{definition}

\begin{definition} \label{densityexp2}
Let $H \! : [0,T] \times \rtor$ be left continuous in each argument when the other is fixed. Further, assume that $H$ admits a density $g$ in its latter argument, with respect to some Radon reference measure $\nu$. That is
\begin{equation}
H(t,y) - H(t,x) = \spint{x}{y-}{g(t,a)}{\nu(a)},
\end{equation}
for each $t \in [0,T]$ and $x,y \in \mathbb{R}$. Also let $g$ be left continuous in $t$ for each fixed $a$. Then the local time space integral $ \int_{0}^{T} \spint{\mathbb{R}}{}{H(s,x)}{\lt{x}{s}}$ of $H$ is defined by
\begin{equation} \label{iterated2}  - \int_\mathbb{R} \spint{0}{T}{g(u,a)}{_u \lt{a}{u}} \, \mathrm{d} \nu(a). \end{equation}
\end{definition}

\begin{definition} \label{densityexp3}
Let $H \! :[0,T] \times \rtor$ be left continuous in each variable when the other is fixed. Assume that H is of locally-bounded Vitali variation, meaning for each $L > 0$,
\begin{equation} \label{variationcondition}
\sup_{\pi} \left[ \,\sum_{i,j=0}^{n-1} \Big| H(t_{i+1},x_{j+1}) - H(t_{i},x_{j+1}) - H(t_{i+1},x_{j}) + H(t_{i},x_{j}) \Big| \right]< \infty,
\end{equation}
where the supremum is taken over all finite disjoint collections of rectangles $\pi$ with vertices $(t_i\,,x_j)$ which cover $[0,T] \times [-L,L]$. Further, assume that the map $a \mapsto H(0,a)$ is of locally bounded one-dimensional variation.
We define the local time space integral $ \int_{0}^{T} \spint{\mathbb{R}}{}{H(s,x)}{\lt{x}{s}}$ of $H$ as
\begin{equation} \label{densityexp3eqn}
- \spint{\mathbb{R}}{}{\lt{a}{T} }{_a H(T,a)} - \int  \!\! \int_{[0,T] \times \mathbb{R}}\lt{a}{u}\, d_{(u,a)} H(u,a),
\end{equation}
where the final integral is a two-parameter Lebesgue-Stieltjes integral.
\end{definition}

It can easily be seen that the stipulation that $a \mapsto H(t,a)$ be of bounded variation for one fixed $t$, along with bounded two-dimensional variation, ensures that $a \mapsto H(t,a)$ is of bounded variation for all fixed $t \in [0,T]$.

The following lemma establishes the equivalence of the different representations  for the local time space integral, starting from (\ref{simpledef}). It is also useful to establish convergence of the local time-space integral of smoothed functions to develop a change-of-variables formula, Theorem \ref{seconditoform}.

\begin{lemma} \label{equiv2}
Definitions \ref{densityexp}, \ref{densityexp2} and \ref{densityexp3} agree with each other almost surely on their common domains of definition.
\end{lemma}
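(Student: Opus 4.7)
The plan is to first verify agreement on a dense class of elementary test functions, then bootstrap to the full common domains of each pair of definitions by approximation and Fubini-type arguments. For a product of indicators $H(u,a) = \mathbbm{1}_{(s_1,s_2]}(u)\,\mathbbm{1}_{(x_1,x_2]}(a)$, Definition \ref{densityexp3} reduces immediately to the right-hand side of (\ref{simpledef}), since the signed two-parameter Lebesgue--Stieltjes measure $d_{(u,a)}H$ places point masses of magnitude $\pm 1$ at the four vertices of the rectangle. For Definition \ref{densityexp} one takes $\mu$ to be a suitable combination of Dirac masses at $s_1$ and $s_2$ so that $h$ becomes a single spatial indicator, and applies Definition \ref{bouleau-yor-def} together with the Tanaka formula to evaluate the inner Bouleau--Yor integral as $\lt{x_2}{u} - \lt{x_1}{u}$; the outer $d\mu$ integral then recovers the four-corner expression of (\ref{simpledef}). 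Definition \ref{densityexp2} is treated symmetrically. Linearity extends the agreement to all finite linear combinations of such products.

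For general $H$ in the common domain of Definitions \ref{densityexp} and \ref{densityexp3}, the density relation $H(t,a) - H(0,a) = \int_0^{t-} h(u,a)\,d\mu(u)$ combined with the Vitali variation hypothesis lets us identify the two-parameter Lebesgue--Stieltjes measure as the product $d_{(u,a)} H(u,a) = d_a h(u,a) \otimes d\mu(u)$. Fubini's theorem then disintegrates
\[
\iint_{[0,T]\times\mathbb{R}} \lt{a}{u}\,d_{(u,a)}H(u,a) = \int_0^{T-}\!\left(\int_\mathbb{R} \lt{a}{u}\,d_a h(u,a)\right) d\mu(u),
\]
and a spatial integration-by-parts identity relating the Bouleau--Yor integral against $d_a \lt{a}{u}$ to the Lebesgue--Stieltjes integral against $d_a h(u,a)$ converts the inner integral into the Bouleau--Yor integral $\int h(u,a)\,d_a \lt{a}{u}$ appearing in Definition \ref{densityexp}. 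The boundary terms $\int_\mathbb{R} H(T,a)\,d_a \lt{a}{T}$ and $-\int_\mathbb{R} \lt{a}{T}\,d_a H(T,a)$ match by applying the same identity at the terminal time. The comparison between Definitions \ref{densityexp3} and \ref{densityexp2} is entirely analogous, interchanging the roles of time and space and using the density $g$ with respect to $\nu$. Equivalence of Definitions \ref{densityexp} and \ref{densityexp2} on their common domain then follows transitively.

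The main obstacle is to rigorously establish the spatial integration-by-parts identity between the Bouleau--Yor and Lebesgue--Stieltjes integrals, because the Bouleau--Yor side is not itself a classical Stieltjes integral. I would prove this identity first for step functions $F$ directly from Definition \ref{bouleau-yor-def} via Tanaka's formula, and then extend by approximation, using dominated convergence for the Bouleau--Yor integral (inherited from the stochastic integral construction) and classical dominated convergence for the Lebesgue--Stieltjes side. The remaining technical work is to check that the Fubini interchange and the approximation respect the left-continuity and local-boundedness hypotheses placed on $H$, $h$, and $g$; localisation in the spatial variable is permissible because $a \mapsto \lt{a}{T}$ has compact support, so all integrals reduce to bounded regions on which the classical theorems apply without further integrability assumptions on the Radon measures $\mu$ or $\nu$.
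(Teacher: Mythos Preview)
Your strategy differs from the paper's and contains genuine gaps. The transitivity step at the end is invalid: an $H$ lying in the common domain of Definitions \ref{densityexp} and \ref{densityexp2} need not satisfy the Vitali-variation hypothesis of Definition \ref{densityexp3}, so you cannot route that comparison through \ref{densityexp3}, and your proposal offers no direct argument for this pair. There is a second, more technical difficulty in your comparison of \ref{densityexp} with \ref{densityexp3}: you assert that the two-parameter measure factorises as $d_a h(u,a)\otimes d\mu(u)$, which presupposes that $a\mapsto h(u,a)$ is of bounded variation for $\mu$-a.e.\ $u$. Definition \ref{densityexp} only asks that $h$ be left-continuous in $a$; although on the common domain $H$ is Vitali-BV, deducing bounded variation of the time-density $h(u,\cdot)$ from this requires a disintegration argument you have not supplied, and without it the right-hand side of your integration-by-parts identity has no meaning, so the step-function approximation cannot close on that side.

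The paper bypasses both issues. It mollifies $H$ to a smooth $H^{m,n}$, forms a left-endpoint Riemann approximation $\tilde H^{m,n}$, and performs a \emph{discrete} integration by parts on the finite double sum $\Lambda(\tilde H^{m,n})$ to rewrite it in three algebraically equivalent forms mirroring (\ref{densityexpeqn}), (\ref{iterated2}) and (\ref{densityexp3eqn}). Letting the mesh tend to zero and then $m,n\to\infty$ shows that each of the three expressions arises as the \emph{same} limit in probability of simple-function integrals; uniqueness of limits gives the agreement. Because $H^{m,n}$ is smooth, all intermediate Fubini exchanges are trivial, and the final passage to the limit for Definition \ref{densityexp3} uses a separate lemma (Lemma \ref{uniformlemma}) on uniform step-function approximation of the local-time field rather than any structural hypothesis on $h$ or $g$. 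All three pairwise comparisons are thus handled simultaneously, with no appeal to transitivity.
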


\begin{proof}
The proof follows by initially considering Riemann-type sums of smoothed functions, extending (\ref{simpledef}). As all the expressions agree by definition for simple functions, we take limits (in probability) to obtain the result. Uniqueness of limits in probability confirms that the definitions agree when mutually defined. 

By localisation of the underlying process, we may assume that the field of local times $(\lt{a}{t})_{a \in \mathbb{R}, 0 \leq t \leq T}$ is compactly supported. Let $[-K,K] \subset \mathbb{R}$ be a closed interval, such that $[0,T] \times [-K,K]$ contains the support of $\lt{}{}$. We take limits of integration to be consistent with left continuity of $H$ if not explicitly described. We define $H(s,a)$ for negative $s$ by $H(0,a)$, to preserve left continuity and simplify the proof. 

Mollify $H$ by taking a function $\rho \in C^\infty (\mathbb{R})$, supported on $[0,1]$, such that $\spint{\mathbb{R}}{}{\rho(x)}{x} = 1$. Define
\begin{equation} \label{mollifiedH}
\begin{split}
H^{m,n}(t,a) &= \spint{\mathbb{R}}{}{\spint{\mathbb{R}}{}{H(t- y/m,a - z/n) \, \rho(y) \, \rho(z) }{z}}{y} \\&= \spint{\mathbb{R}}{}{\spint{\mathbb{R}}{}{mn H(r,q) \, \rho(m(t-r)) \, \rho(n(a-q)) }{r}}{q},
\end{split}
\end{equation}
for each $m,n \in \mathbb{N}$. Then $H^{m,n}$ is smooth in both variables, and it is straightforward to show that $H^{m,n}$ converges to $H$ pointwise as $m,n \to \infty$ due to the left continuity of $H$. 

We fix an arbitrary partition of $[0,T] \times [-K,K]$ into rectangles, and label the vertices $(t_i,x_j)$ for $i=0,\dots,N$ and $j=0,\dots,M$. We ensure $x_0 = -K$, $x_M = K$, $t_0 = 0$ and $t_N = T$ by adding points if necessary. 

Take $\tilde{H}^{m,n} \! : [0,T] \times \mathbb{R} \to \mathbb{R}$ to be the left-endpoint approximation of $H$, namely
\begin{equation}
\tilde{H}^{m,n}(t,x) = \sum_{i=0}^{N-1} \sum_{j=0}^{M-1} H^{m,n}_{i,j} \mathbbm{1}_{(t_{i},t_{i+1}]} (t) \mathbbm{1}_{(x_{j},x_{j+1}]} (x),
\end{equation}
where $H^{m,n}_{i,j} = H^{m,n}(t_i,x_j)$. By definition, the local time-space integral of $\tilde{H}^{m,n}$ is 
\begin{equation} \label{firstsumexp}
 \Lambda(\tilde{H}^{m,n}) =  \sum_{i=0}^{N-1} \sum_{j=0}^{M-1} H^{m,n}_{i,j} \left[ \lt{x_{j+1}}{t_{i+1}} -  \lt{x_{j+1}}{t_{i}} -  \lt{x_{j}}{t_{i+1}} +  \lt{x_{j}}{t_{i}} \right].
\end{equation}
Note that this expression agrees with (\ref{simpledef}). By rearranging this sum in the style of a discrete integration by parts, we may express (\ref{firstsumexp}) in any of the following three ways:
\begin{equation} \label{midrep1}
\begin{split}
\Lambda(\tilde{H}^{m,n}) =  &\sum_{i=0}^{N-1} H^{m,n}_{i,M-1} \left[ \lt{x_{M}}{t_{i+1}} -  \lt{x_{M}}{t_{i}} \right] -  \sum_{i=0}^{N-1} H^{m,n}_{i,0} \left[ \lt{x_{0}}{t_{i+1}} -  \lt{x_{0}}{t_{i}} \right]\\  
&- \sum_{i=0}^{N-1} \sum_{j=1}^{M-1} \left[ H^{m,n}_{i,j} - H^{m,n}_{i,j-1} \right] \left[  \lt{x_{j}}{t_{i+1}} -  \lt{x_{j}}{t_{i}} \right].
\end{split}
\end{equation}

\begin{equation} \label{midrep2}
\begin{split}
\Lambda(\tilde{H}^{m,n}) =  &\sum_{j=0}^{M-1} H^{m,n}_{N-1,j} \left[ \lt{x_{j+1}}{t_{N}} -  \lt{x_{j}}{t_{N}} \right] -  \sum_{j=0}^{M-1} H^{m,n}_{0,j} \left[ \lt{x_{j+1}}{t_{0}} -  \lt{x_{j}}{t_{0}} \right] \\  
&- \sum_{i=1}^{N-1} \sum_{j=0}^{M-1} \left[ H^{m,n}_{i,j} - H^{m,n}_{i-1,j} \right] \left[  \lt{x_{j+1}}{t_{i}} -  \lt{x_{j}}{t_{i}} \right].
\end{split}
\end{equation}

\begin{equation} \label{midrep3}
\begin{split}
\Lambda(\tilde{H}^{m,n}) = \,& H^{m,n}_{N-1,M-1} \lt{x_M}{t_N} - H^{m,n}_{0,M-1} \lt{x_M}{t_0} - H^{m,n}_{N-1,0} \lt{x_0}{t_N} + H^{m,n}_{0,0} \lt{x_0}{t_0} \\ 
&- \sum_{i=1}^{N-1} \lt{x_M}{t_i} \left[ H^{m,n}_{i,M-1} - H^{m,n}_{i-1,M-1} \right] +  \sum_{i=1}^{N-1} \lt{x_0}{t_i} \left[ H^{m,n}_{i,0} - H^{m,n}_{i-1,0} \right]   \\  
&- \sum_{j=1}^{M-1} \lt{x_j}{t_N} \left[ H^{m,n}_{N-1,j} - H^{m,n}_{N-1,j-1} \right] + \sum_{j=1}^{M-1} \lt{x_j}{t_0} \left[ H^{m,n}_{0,j} - H^{m,n}_{0,j-1} \right]   \\  
&+ \sum_{i=1}^{N-1} \sum_{j=1}^{M-1} \lt{x_j}{t_i} \left[ H^{m,n}_{i,j} - H^{m,n}_{i-1,j} - H^{m,n}_{i,j-1} + H^{m,n}_{i-1,j-1} \right].
\end{split}
\end{equation}

Note that local times at $x_M$ and $x_0$ vanish by compactness of support.

We deal first with Definitions \ref{densityexp} and \ref{densityexp2}. Assume, for now, that $H$ satisfies the conditions of Definition \ref{densityexp}. By standard manipulations,
\begin{equation}
\begin{split}
H^{m,n}(t,a) - H^{m,n}(s,a) &= \spint{\mathbb{R}}{}{\spint{\mathbb{R}}{}{mn \, H(r,q) \, \Big[ \rho(m(t-r)) \\ & \hspace{120pt} - \rho(m(s-r)) \Big] \, \rho(n(a-q)) }{r}}{q} \\ &=  \spint{\mathbb{R}}{}{\spint{\mathbb{R}}{}{mn \, H(r,q) \, \spint{s}{t}{m\rho'(m(u-r))}{u} \, \rho(n(a-q)) }{r}}{q} \\ &= \spint{s}{t}{\spint{\mathbb{R}}{}{\spint{\mathbb{R}}{}{m^2 n \, H(r,q) \, \rho'(m(u-r)) \, \rho(n(a-q)) }{r}}{q}}{u}.
\end{split}
\end{equation}
Now applying properties of Lebesgue-Stieltjes integrals, 
\begin{equation}
\begin{split}
H^{m,n}(t,a) - H^{m,n}(s,a) &= \spint{s}{t}{\spint{\mathbb{R}}{}{\spint{\mathbb{R}}{}{-mn \, H(r,q) \, \rho(n(a-q)) }{_r \rho(m(u-r)) }}{q}}{u} \\ &=  \spint{s}{t}{\spint{\mathbb{R}}{}{\spint{\mathbb{R}}{}{mn \, \rho(m(u-r)) \, \rho(n(a-q)) }{_r H(r,q) }}{q}}{u} \\ &=  \spint{s}{t}{\spint{\mathbb{R}}{}{\spint{\mathbb{R}}{}{mn \, h(r,q) \, \rho(m(u-r)) \, \rho(n(a-q)) }{ \mu(r) }}{q}}{u},
\end{split}
\end{equation}
where for consistency we have chosen $h(r,q) = h(0,q)$ for negative $r$, and $\mu( (-\infty,0) ) = 0$.

We now follow the same steps when $H$ instead obeys Definition \ref{densityexp2}, which yields
\begin{equation} \label{firstpartfirstproof}
\begin{split}
H^{m,n}(t,y) - H^{m,n}(t,x) =  \spint{x}{y}{\spint{\mathbb{R}}{}{\spint{\mathbb{R}}{}{mn \, g(r,q) \, \rho(m(t-r)) \, \rho(n(p-q)) }{ \nu(q) }}{r}}{p}.
\end{split}
\end{equation}

Again, we proceed in the situation of Definition \ref{densityexp}. Following from (\ref{midrep2}), 
\begin{equation}
\begin{split}
\Lambda(\tilde{H}^{m,n}) =  &\sum_{j=0}^{M-1} H^{m,n}_{N-1,j} \left[ \lt{x_{j+1}}{t_{N}} -  \lt{x_{j}}{t_{N}} \right] -  \sum_{j=0}^{M-1} H^{m,n}_{0,j} \left[ \lt{x_{j+1}}{t_{0}} -  \lt{x_{j}}{t_{0}} \right] \\  
&- \sum_{i=1}^{N-1} \sum_{j=0}^{M-1} \spint{t_{i-1}}{t_i}{\spint{\mathbb{R}}{}{\spint{\mathbb{R}}{}{mn \, h(r,q) \, \rho(m(u-r)) \, \\& \hspace{150pt} \rho(n(x_j-q)) }{ \mu(r) }}{q}}{u}  \left[  \lt{x_{j+1}}{t_{i}} -  \lt{x_{j}}{t_{i}} \right].
\end{split}
\end{equation}
Incorporating indicator functions, we see
\begin{equation}
\begin{split}
\Lambda(\tilde{H}^{m,n}) =  &\sum_{j=0}^{M-1} \spint{\mathbb{R}}{}{H^{m,n}_{N-1,j} \indic{x_j < a \leq x_{j+1} }}{_a \lt{a}{t_N}} \\&- \sum_{i=1}^{N-1} \sum_{j=0}^{M-1} \spint{0}{T}{\spint{\mathbb{R}}{}{\spint{\mathbb{R}}{}{\spint{\mathbb{R}}{}{mn \, h(r,q) \, \rho(m(u-r)) \,\\ & \hspace{100pt} \rho(n(x_j-q)) \indic{t_{i-1} \leq u < t_i} \, \indic{x_j < a \leq x_{j+1} }}{ \mu(r) }}{q}}{_a \lt{a}{t_i}}}{u} .
\end{split}
\end{equation}

Letting the mesh of the time partition tend to zero, using continuity of local time in the time variable, we obtain
\begin{equation}
\begin{split}
&\sum_{j=0}^{M-1}  \spint{\mathbb{R}}{}{H^{m,n}(T,x_j) \indic{x_j < a \leq x_{j+1} }}{_a \lt{a}{T}} \\&- \sum_{j=0}^{M-1} \spint{0}{T}{\spint{\mathbb{R}}{}{\spint{\mathbb{R}}{}{\spint{\mathbb{R}}{}{mn \, h(r,q) \, \rho(m(u-r)) \, \rho(n(x_j-q)) \\ & \hspace{220pt}  \indic{x_j < a \leq x_{j+1} }}{ \mu(r) }}{q}}{_a \lt{a}{u}}}{u} .
\end{split}
\end{equation}

Now let the mesh of the space partition tend to zero. By smoothness of $\rho$, and left continuity of $H$ in space,
\begin{equation} \label{convolved1}
\begin{split}
\Lambda(H^{m,n}) =  & \spint{\mathbb{R}}{}{H^{m,n}(T,a)}{_a \lt{a}{T}} \\& - \spint{0}{T}{\spint{\mathbb{R}}{}{\spint{\mathbb{R}}{}{\spint{\mathbb{R}}{}{mn \, h(r,q) \, \rho(m(u-r)) \, \rho(n(a-q)) }{_a \lt{a}{u}}}{ \mu(r) }}{q}}{u} .
\end{split}
\end{equation}

Let us state the corresponding result for $H$ satisfying Definition \ref{densityexp2}. Recall that the local time is right continuous, and pointwise convergence of $\lt{x_j}{s}$ to $\lt{a}{s}$ implies weak convergence of the measures $\mathrm{d}\lt{x_j}{s}$ to $\mathrm{d}\lt{a}{s}$ for each fixed $a$. Thus, after following the same procedure in that case, with obvious modifications, we obtain
\begin{equation} \label{convolved2}
\begin{split}
\Lambda(H^{m,n}) = - \spint{\mathbb{R}}{}{\spint{\mathbb{R}}{}{\spint{\mathbb{R}}{}{\spint{0}{T}{mn \, g(r,q) \, \rho(m(s-r)) \, \rho(n(p-q)) }{_s \lt{q}{s} }}{\nu(q)}}{r}}{p}.
\end{split}
\end{equation}
Now we take limits as $m \to \infty$, then $n \to \infty$. Making the change of variables $z = m(u-r)$ in (\ref{convolved1}), we see
\begin{equation} \label{convolved3}
\begin{split}
\Lambda(H^{m,n}) =  & \spint{\mathbb{R}}{}{H^{m,n}(T,a)}{_a \lt{a}{T}} \\&- \spint{-mr}{m(T-r)}{\spint{\mathbb{R}}{}{\spint{\mathbb{R}}{}{\spint{\mathbb{R}}{}{n \, h(r,q) \, \rho(n(a-q)) }{_a \lt{a}{r + z/m}} \, \rho(z) }{ \mu(r) }}{q}}{z} .
\end{split}
\end{equation}
Taking a limit as $m \to \infty$, we note that continuity of local time in the time variable ensures the Bouleau-Yor integrals converge. Indeed, this follows from the continuity of the underlying process $X$ and the continuity of the stochastic integral. Further, unless $0 \leq r < T$ the limits of integration both diverge in the same direction, yielding zero by compactness of the support of $\rho$. So we have
\begin{equation}
\begin{split}
\Lambda(H^{n}) =  & \spint{\mathbb{R}}{}{H^{n}(T,a)}{_a \lt{a}{T}} - \spint{\mathbb{R}}{}{\spint{0}{T}{\spint{\mathbb{R}}{}{n \, h(r,q) \, \rho(n(a-q)) }{_a \lt{a}{r}}  }{ \mu(r) }}{q} .
\end{split}
\end{equation}
Finally, changing variables as before and taking a limit as $n \to \infty$, we obtain
\begin{equation}
\begin{split}
\Lambda(H) =  & \spint{\mathbb{R}}{}{H(T,a)}{_a \lt{a}{T}} - \spint{0}{T}{\spint{\mathbb{R}}{}{  h(r,a)  }{_a \lt{a}{r}}  }{ \mu(r) } .
\end{split}
\end{equation}

Again performing the same operations with obvious modifications, we see that (\ref{convolved2}) becomes
\begin{equation}
\begin{split}
\Lambda(H) = - \spint{\mathbb{R}}{}{\spint{0}{T}{ \, g(s,q) }{_s \lt{q}{s} }}{\nu(q)}.
\end{split}
\end{equation}

Now let us deal with Definition \ref{densityexp3}. We will require Lemma \ref{uniformlemma}, which is proved separately. Beginning from (\ref{midrep3}), we see
\begin{equation} 
\begin{split}
\Lambda(\tilde{H}^{m,n}) = &- \sum_{j=1}^{M-1} \spint{\mathbb{R}}{}{ \lt{x_j}{T} \indic{x_{j-1} \leq a < x_{j}} }{_a H^{m,n}(t_{N-1},a) }    \\  
&- \sum_{i=1}^{N-1} \sum_{j=1}^{M-1} \int  \!\! \int_{[0,T] \times \mathbb{R}} \lt{x_j}{t_i} \indic{t_{i-1} \leq u \leq t_i} \indic{x_{j-1} \leq a < {x_j}} \mathrm{d}_{(u,a)} H^{m,n}(u,a).
\end{split}
\end{equation}
Exchange the order of summation and integration, then take limits as the mesh of the time and space partitions converge to zero. Using the joint right continuity of local time and the dominated convergence theorem, we see
\begin{equation} \label{smoothedintegral}
\begin{split}
\Lambda(H^{m,n}) = \spint{\mathbb{R}}{}{ \lt{a}{T} }{_a H^{m,n}(T,a) }  - \int  \!\! \int_{[0,T] \times \mathbb{R}} \lt{a}{u} \, \mathrm{d}_{(u,a)} H^{m,n}(u,a).
\end{split}
\end{equation}

Using Lemma \ref{uniformlemma}, fix $\epsilon > 0$ and choose a right-continuous step function $\psi$ such that $\| \lt{}{} - \psi \|_\infty < \epsilon$. Then
\begin{equation} \label{pwconveqn}
\begin{split}
&\left| \int  \!\! \int_{[0,T] \times \mathbb{R}} \lt{a}{u} \, \mathrm{d}_{(u,a)} H^{m,n}(u,a) - \int  \!\! \int_{[0,T] \times \mathbb{R}} \lt{a}{u} \, \mathrm{d}_{(u,a)} H(u,a) \right| \\
& \leq \left| \int  \!\! \int_{[0,T] \times \mathbb{R}} \lt{a}{u} - \psi(a,u) \, \mathrm{d}_{(u,a)} H^{m,n}(u,a)  \right| + \left| \int_{[0,T] \times \mathbb{R}} \psi(a,u) \, \mathrm{d}_{(u,a)} ( H^{m,n} - H)(u,a) \right| \\ & \hspace{15pt} + \left| \int_{[0,T] \times \mathbb{R}} \psi(a,u) - \lt{a}{u} \, \mathrm{d}_{(u,a)} H(u,a) \right| \\
& \leq \epsilon \Big[ \text{TV}(H^{m,n}) + \text{TV}(H) \Big] + \Bigg|  \sum_{i=1}^m \sum_{j=1}^n a_{i,j} \, \Big[ H^{m,n}(t_i,x_j) - H(t_i,x_j) - H^{m,n}(t_{i-1},x_j) \\  & \hspace{30pt} + H(t_{i-1},x_j)  - H^{m,n}(t_i,x_{j-1})    + H(t_i,x_{j-1})  + H^{m,n}(t_{i-1},x_{j-1}) - H(t_{i-1},x_{j-1}) \Big] \Bigg|.
\end{split}
\end{equation}
By pointwise convergence of $H^{m,n}$ to $H$, each term in this final sum can be made arbitrarily small. A simpler version of the same procedure also shows that the former integral in (\ref{smoothedintegral}) converges.

Finally, let us note that the procedure we followed yields the same result independently of the choice of sequence of partitions.
\end{proof}

The following lemma is a technical result which was needed in the previous lemma, but will also be useful later.

\begin{lemma}
\label{uniformlemma}
The field of local times admits a uniform approximation by jointly right continuous step functions. That is, for all $\epsilon > 0$, there is a function $\psi$ of the form
\begin{equation}
\psi(s,a) = \sum_{i=1}^m \sum_{j=1}^n a_{i,j} \, \mathbbm{1}_{[ t_{i-1} , t_{i} )} (s) \, \mathbbm{1}_{[ x_{j-1} , x_j ) }(a),
\end{equation}
such that $\| \lt{}{} - \psi \|_{\infty} < \epsilon$ on the domain $[0,T] \times \mathbb{R}$.
\end{lemma}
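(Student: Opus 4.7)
Using a localisation argument, I would first stop $X$ so that the field $(t,a)\mapsto\ell^a_t$ is supported on a fixed compact rectangle $[0,T]\times[-K,K]$; on this set $\ell$ is continuous in $t$ and cadlag in $a$. The plan is then to construct a rectangular grid $\{t_i\}\times\{x_j\}$ of $[0,T]\times[-K,K]$ on which the oscillation of $\ell$ is at most $\epsilon$, and to take $\psi$ equal to $\ell^{x_{j-1}}_{t_{i-1}}$ on the cell $[t_{i-1},t_i)\times[x_{j-1},x_j)$. This matches the form required in the statement, and a triangle inequality will immediately yield $\|\ell-\psi\|_\infty<\epsilon$ on $[0,T]\times\mathbb{R}$.

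The spatial partition is built in two stages. The key structural remark is that the spatial jump $\ell^a_t-\ell^{a-}_t$ is non-decreasing in $t$, since it admits a representation as an integral against the bounded-variation part of $X$; consequently every jump of $a\mapsto\ell^a_t$ of size exceeding $\epsilon/4$, for any $t\in[0,T]$, must occur at a point of the finite set $D_\epsilon:=\{a\in[-K,K]:\ell^a_T-\ell^{a-}_T>\epsilon/4\}=\{a_1<\cdots<a_m\}$. I would insert these points into the partition and then further refine each subinterval $(a_{k-1},a_k)$ so that on each resulting cell $[x_{j-1},x_j)$ the oscillation of $a\mapsto\ell^a_t$ is below $\epsilon/4$ \emph{uniformly} in $t\in[0,T]$. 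Once the $x_j$ are fixed, the finitely many continuous maps $t\mapsto\ell^{x_{j-1}}_t$ share a common modulus of uniform continuity on the compact $[0,T]$, which supplies the time partition.

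The main obstacle is the uniform-in-$t$ spatial oscillation control in the refinement step: the oscillation of $a\mapsto\ell^a_t$ on a cell is not monotone in $t$, so smallness at $t=T$ does not propagate automatically to earlier times. To close this gap I would invoke the Tanaka-type decomposition of the difference,
\[
\ell^b_t-\ell^a_t=(|X_t-b|-|X_t-a|)-(|X_0-b|-|X_0-a|)+2\spint{0}{t}{\mathbf{1}_{\{a<X_s\le b\}}}{X_s},
\]
note that the two boundary terms are bounded by $2(b-a)$ uniformly in $t$, and handle the supremum over $t$ of the stochastic-integral term via Doob's maximal inequality combined with the occupation time formula $\int_0^T \mathbf{1}_{\{a<X_s\le b\}}\,\mathrm{d}\langle X\rangle_s=\int_a^b \ell^x_T\,\mathrm{d}x$, which is small when $b-a$ is small. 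The complementary bounded-variation contribution is controlled by the occupation measure $\int_0^T \mathbf{1}_{\{a<X_s\le b\}}\,\mathrm{d}|V|_s$, which vanishes as $b\downarrow a$ once the large-jump points in $D_\epsilon$ have been isolated, since the sum of spatial jump sizes $\sum_a(\ell^a_T-\ell^{a-}_T)$ is bounded by $2|V|_{[0,T]}$ and is therefore absolutely convergent.
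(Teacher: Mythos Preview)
Your approach differs from the paper's and has two issues worth flagging.

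First, the claim that the spatial jump $\ell^a_t-\ell^{a-}_t$ is non-decreasing in $t$ is not correct for a general continuous semimartingale: the representation is $\ell^a_t-\ell^{a-}_t=2\int_0^t\mathbf{1}_{\{X_s=a\}}\,\mathrm{d}A_s$, and the bounded-variation part $A$ need not be increasing on $\{X_s=a\}$. This is easily repaired: bound $\sup_{t\le T}|\ell^a_t-\ell^{a-}_t|\le 2\int_0^T\mathbf{1}_{\{X_s=a\}}\,\mathrm{d}|A|_s$, whose sum over $a$ is at most $2|A|_T$, so the set of $a$ carrying a jump exceeding $\epsilon/4$ at \emph{some} time $t\le T$ is still finite.

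The more substantial gap is in the martingale part. Doob's maximal inequality together with the occupation time formula gives an $L^2$ estimate of the form $\mathbb{E}\bigl[\sup_{t\le T}\bigl|\int_0^t\mathbf{1}_{\{a<X_s\le b\}}\,\mathrm{d}M_s\bigr|^2\bigr]\le C\int_a^b\mathbb{E}[\ell^x_T]\,\mathrm{d}x$. But the lemma is a pathwise statement: for (almost every) fixed $\omega$ you must produce a grid on which the oscillation is uniformly small. An $L^2$ bound on each cell does not deliver this directly; you would need to add either a Borel--Cantelli argument along a fixed sequence of dyadic refinements, or a Kolmogorov-type continuity theorem for $(t,a)\mapsto\int_0^t\mathbf{1}_{(-\infty,a]}(X_s)\,\mathrm{d}M_s$. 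Either route essentially re-derives the joint regularity of the local time field that the paper already takes as input.

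By contrast, the paper's proof stays entirely at the level of pathwise regularity. It fixes a time $t_0$, uses that the c\`adl\`ag slice $a\mapsto\ell^a_{t_0}$ is regulated and hence uniformly approximable by a step function, and then argues from the joint right-continuity (with left limits in $a$, continuity in $t$) via a sequential-compactness contradiction that $\sup_a|\ell^a_s-\ell^a_{t_0}|<\epsilon/2$ for all $s$ in a neighbourhood of $t_0$. This extends the one-dimensional step approximation to a time slab, and a bootstrap argument on the supremum of admissible times covers all of $[0,T]$. No Tanaka decomposition or maximal inequality is needed.
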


\begin{proof}
Fix $\epsilon > 0$. By localising the underlying process, we may take the field of local times $\lt{}{}$ to have compact support $[-K,K] \times [0,T] \subset \mathbb{R} \times \mathbb{R}_+$.  Let $A_\epsilon = \{ (t,a) \in [-K,K] \times [0,T] \, | \, $ there exists a right continuous step function $\psi$ such that $\| \lt{}{} - \psi \|_\infty < \epsilon$ on $[0,t] \times  [-K,a]  \}$.

As $a \mapsto \lt{a}{0}$ is right continuous with left limits (in fact the zero function), it admits a uniform approximation by step functions. Thus $[-L,L] \times \{0 \} \subset A$.

Choose such a uniform approximation with error at most $\frac{\epsilon}{2}$. Precisely, take $\phi: [-L,L] \to \mathbb{R}$ such that
\begin{equation}
\phi(a) = \sum_{i=1}^n a_i \indic{x_{i-1} \leq a < x_i},
\end{equation}
and $\| \phi - \lt{}{} \| < \frac{\epsilon}{2}$. Now we may choose $\delta > 0$ such that for all $a \in [-L,L]$ and $s \in [0,\delta)$, we have $|\lt{a}{s} - \lt{a}{0} | < \frac{\epsilon}{2}$. If this were not the case, there would be some sequence $(a_k,s_k)$ such that $0 < s_k < 1/k$ and $|\lt{a_k}{s_k} - \lt{a_k}{0}| > \frac{\epsilon}{2}$. The set $\{ a_k \}$ must have an accumulation point, and so by passing to a subsequence, we may assume $a_k \to a$ monotonically for some point $a$, meaning $(a_k,s_k) \to (a,0)$ monotonically in $a$. This contradicts the joint right continuity with left limits in space, and continuity in time, of the local time.

We may now extend $\phi$ by defining
\begin{equation}
\theta(a,s) = \sum_{i=1}^n a_i \indic{x_{i-1} \leq a < x_i} \indic{0 \leq s < \delta},
\end{equation}
and noting that $\| \theta - \lt{}{} \|_\infty \leq \| \phi - \lt{}{0} \|_\infty + \| \lt{}{} - \lt{}{0} \|_\infty < \epsilon$. Thus we know $[-L,L] \times \ [0,\delta) \subseteq A$.

Define $\mathcal{T} = \sup \, \{ \, t \; | \, $ $[-L,L] \times [0,t) \subseteq A \}$. Arguing by contradiction, assume $\mathcal{T} < T$. By the same procedure as before, using joint regularity of the local time, we may choose $\delta > 0$ such that $\| \lt{a}{\mathcal{T}} - \lt{a}{s} \| < \frac{\epsilon}{3}$, uniformly in $a$, for all $s \in (\mathcal{T} - \delta,\mathcal{T} + \delta)$. By definition of $\mathcal{T}$, we may choose some jointly right continuous step function $\theta$ such that $\| \theta - \lt{}{} \| < \frac{\epsilon}{3}$ on $[-L,L] \times [0,\mathcal{T} - \frac{\delta}{2})$. Finally, as $a \mapsto \lt{a}{\mathcal{T}}$ is regulated, we choose some step function $\phi : [-L,L] \to \mathbb{R}$ such that $\| \phi - \lt{}{\mathcal{T}} \| < \frac{\epsilon}{3}$. We extend the definition of $\theta$ by defining
\begin{equation}
\tilde{\theta}(a,s) = \left\{
\begin{array}{ll}
\theta(a,s) & 0 \leq s < \mathcal{T} - \frac{\delta}{2} \\
\phi(a) & \mathcal{T} - \frac{\delta}{2} \leq s < \mathcal{T} + \frac{\delta}{2}
\end{array} \right.
\end{equation}
Again by the triangle inequality, we may show that $\| \tilde{\theta} - \lt{}{} \| < \epsilon$. Thus we have $\mathcal{T} = T$.

Finally we may show that $[-K,K] \times [0,T] = A$ by the same procedure, as $a \mapsto \lt{a}{T}$ is regulated, and replacing $(\mathcal{T} - \delta, \mathcal{T} + \delta)$ by $(T - \delta,T]$ in the argument just given.

\end{proof}

\begin{remark}
The previous definitions and lemmas of this section hold also for random functions $H : \rplusr \times \Omega \to \mathbb{R}$ which obey the conditions pathwise almost surely. For the Bouleau-Yor integral to be well defined in Definition \ref{bouleau-yor-def}, it is sufficient that $(s,\omega) \mapsto H(s,X_s,\omega)$ be a predictable process.
\end{remark}

\begin{remark}
We may generalise Definitions \ref{densityexp} and \ref{densityexp2} to the case when $H$ can be described by the sum of a finite number of measures and densities. That is,
\begin{equation}
H(t,a) - H(s,a) = \sum_{i=1}^n \spint{s}{t-}{h^i(u,a)}{\mu^i(u)},
\end{equation}
where each $h^i$ and $\mu^i$ obey the required conditions, and the analogous extension holds for Definition \ref{densityexp2}. The final formula changes in an obvious way.
\end{remark}

\begin{remark}
We chose to use left limits and the right local time, which is right continuous in space. These may be replaced by right or symmetric limits and the left or symmetric local time, provided the corresponding conditions (mostly right continuity) are satisfied. Note that the left local time has a modification which is left continuous in space. Also note that if one uses right limits with left local time, then (\ref{densityexpeqn}) should be an integral with respect to $\lt{a-}{u}$ instead.
\end{remark}

\section{A local time-space change of variables formula}
%%%%%%%%%%%%%%%%%%%%%%%%%%%%%%%%%%%%
Our aim in constructing the local time-space integral was to unify various representations of the correction term in generalisations of It\^o's formula. The following change of variables formula achieves this.

\begin{theorem} \label{seconditoform}
Let $F : \rplusrtor$, given by $(t,x) \mapsto F(t,x)$ be left continuous in $t$ and continuous in $x$ when the other argument is fixed. Also assume that $F$ admits the following:
\begin{enumerate}
\item A density in the time variable $t$ with respect to some Radon reference measure $\mu$, denoted $F_t$, which admits left limits in the space variable $x$. That is,
\begin{equation} \label{itodensity}
F(t,x) - F(s,x) = \spint{s}{t-}{F_t(u,x)}{\mu(u)},
\end{equation}
for each $s,t \in [0,T]$ and $x \in \mathbb{R}$, where $F_t(u,x-)$ exists.
\item A left-partial derivative in the space variable $x$, denoted $F_x$, which is local time-space integrable in any of the given forms.
\end{enumerate}
Then we have 
\begin{equation} \label{bigito}
\begin{split}
 F(T,X_T) - F(0,X_0) = &\spint{0}{T-}{ \!\!\! F_{t}(s,X_s -)}{\mu(s)} + \spint{0}{T}{\!F_x(s,X_s)}{X_s} - \frac12  \spint{0}{T}{\! \int_\mathbb{R} F_{x}(s,a)}{\lt{a}{s}},
\end{split}
\end{equation}
for all $T \geq 0$.
\end{theorem}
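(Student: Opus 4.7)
The plan is to mollify $F$ in both variables by the same convolution used in the proof of Lemma \ref{equiv2}, producing a jointly smooth $F^{m,n}$, apply classical It\^o's formula to $F^{m,n}(T,X_T)$, and let $m,n \to \infty$. After first stopping $X$ so that it stays in a compact interval (rendering the local time field compactly supported and all relevant densities bounded), $F^{m,n}$ is $C^{1,2}$ and classical It\^o gives
\begin{equation*}
F^{m,n}(T,X_T) - F^{m,n}(0,X_0) = \spint{0}{T}{\partial_t F^{m,n}(s,X_s)}{s} + \spint{0}{T}{\partial_x F^{m,n}(s,X_s)}{X_s} + \tfrac12 \spint{0}{T}{\partial_{xx} F^{m,n}(s,X_s)}{\langle X,X\rangle_s}.
\end{equation*}
The occupation time formula (\ref{otf}) rewrites the third term as $\int_\mathbb{R} \int_0^T \partial_{xx} F^{m,n}(s,a) \, \mathrm{d}_s \ell^a_s \, \mathrm{d}a$, and Definition \ref{densityexp2} applied to $\partial_x F^{m,n}$ (whose spatial density against Lebesgue is $\partial_{xx} F^{m,n}$) identifies this with $- \int_0^T \int_\mathbb{R} \partial_x F^{m,n}(s,a) \, \mathrm{d}\ell^a_s$. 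Convergence of this local time-space integral to the target $\int_0^T \int_\mathbb{R} F_x(s,a) \, \mathrm{d}\ell^a_s$ is already implicit in the proof of Lemma \ref{equiv2}.

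The non-trivial new convergence is the time-integral term. Writing the mollified density as
\begin{equation*}
\partial_t F^{m,n}(u,X_u) = \spint{\mathbb{R}}{}{\spint{\mathbb{R}}{}{mn \, F_t(r,q) \, \rho(m(u-r)) \, \rho(n(X_u - q))}{\mu(r)}}{q},
\end{equation*}
integrating in $u$ against Lebesgue, applying Fubini, and substituting $z = m(u-r)$ transforms the $u$-bounds to $[-mr, m(T-r)]$, which contain the support of $\rho$ for each $r \in [0,T)$ once $m$ is large enough, and are disjoint from it at $r = T$. Pathwise continuity of $X$ permits bounded convergence as $m \to \infty$ to replace $X_{r+z/m}$ by $X_r$. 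The spatial mollifier $n\rho(n(X_r - \cdot))$ has support $[X_r - 1/n, X_r]$, so by the left-limit hypothesis on $F_t$ its convolution against $F_t(r,\cdot)$ converges to $F_t(r, X_r-)$ as $n \to \infty$. The combined limit is therefore $\int_0^{T-} F_t(s,X_s-) \, \mathrm{d}\mu(s)$. The stochastic integral converges by stochastic dominated convergence, since $\partial_x F^{m,n}(s,X_s) \to F_x(s,X_s-) = F_x(s,X_s)$ by left continuity of $F_x$, uniformly bounded after localisation. The boundary terms $F^{m,n}(0,X_0)$ and $F^{m,n}(T,X_T)$ converge to $F(0,X_0)$ and $F(T,X_T)$ by continuity in $x$ and left continuity in $t$.

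The principal obstacle is organising the iterated limit so that the conversion from the Lebesgue $\mathrm{d}u$-integration to the $\mathrm{d}\mu(r)$-integration is rigorous for a general Radon measure $\mu$, and so that the endpoint behaviour at $r = T$ is handled correctly (this is what produces the $T-$ in the statement, and what forces the left limit $F_t(s,X_s-)$ to appear rather than $F_t(s,X_s)$). Verifying that $\partial_x F^{m,n}$ satisfies the hypotheses of Definition \ref{densityexp2} and that all Fubini swaps are legitimate is routine given the compact supports introduced by the mollifier and the localisation, as is the final removal of the localising stopping time by monotone class / standard extension.
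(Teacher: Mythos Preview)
Your proposal is correct and follows essentially the same architecture as the paper: mollify $F$ into a jointly smooth $F^{m,n}$, establish the formula for the smooth approximant, and pass to the limit using the convergence already built into the proof of Lemma \ref{equiv2}. The time-integral manipulation you describe (the substitution $z=m(u-r)$, the collapse of the $u$-limits for $r\in[0,T)$, and the left-limit in space arising from the support of $\rho$) is exactly the paper's computation.

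The one structural difference is how the smooth case is handled. You apply classical It\^o directly to $F^{m,n}$ and then identify the quadratic-variation term with $-\tfrac12\Lambda(\partial_x F^{m,n})$ via Definition \ref{densityexp2} and the occupation time formula. The paper instead starts from the representation of Definition \ref{densityexp}, expands the Bouleau-Yor integrals using (\ref{bouleau-yor2}), and then recovers the It\^o terms through stochastic Fubini and the fundamental theorem of calculus. Your route is the more direct of the two; the paper's route has the mild advantage of exhibiting explicitly how the $\mathrm{d}_a\ell^a_s$-term in Definition \ref{densityexp} reconciles with the classical formula. Since Lemma \ref{equiv2} guarantees the three representations agree, both identifications are legitimate and the remaining approximation arguments coincide.
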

\begin{proof}
We first prove the result for functions of class $C^2$. We have that 
\begin{equation}
\begin{split}
\frac12 \int_0^T \! \! \spint{\mathbb{R}}{}{ F_{x}(s,a)}{\lt{a}{s}} = &\frac{1}{2} \left( \spint{\mathbb{R}}{}{F_x(T,a)}{_a \lt{a}{T}} - \int_0^{T} \left( \spint{\mathbb{R}}{}{ F_{xt}(s,a) }{_a \lt{a}{s}} \right) \, \mathrm{d} s \right),
\end{split}
\end{equation}
which follows from Definition \ref{densityexp}. We expand the right-hand side using (\ref{bouleau-yor2}), giving
\begin{equation}
\begin{split}
& F(T,X_T) - F(T,X_0) - \spint{0}{T}{F_x(T,X_u)}{X_u} \\ & - \int_0^{T} \Big( F_{t}(s,X_s) - F_{t}(s,X_0) - \spint{0}{s}{ F_{xt}(s,X_u) }{X_u} \Big) \, \mathrm{d} s.
\end{split}
\end{equation}
Now apply the stochastic Fubini theorem and fundamental theorem of calculus, to give
\begin{equation}
\begin{split}
&F(T,X_t) - F(0,X_0) - \spint{0}{T}{F_t(s,X_s)}{s} \\ &- \spint{0}{T}{F_x(T,X_u)}{X_u} + \spint{0}{T}{\spint{u}{T}{F_{xt}(s,X_u)}{s}}{X_u}.
\end{split}
\end{equation}
Another application of the fundamental theorem of calculus yields the result.

We then establish (\ref{bigito}) by approximating functions in the domain of definition of each representation by functions in $C^2$. Specifically, mollify $F$ in the same form as (\ref{mollifiedH}). Then we have
\begin{equation} \label{mollifiedF}
\begin{split}
F^{m,n}(t,a) &= \spint{\mathbb{R}}{}{\spint{\mathbb{R}}{}{F(t- y/m,a - z/n) \, \rho(y) \, \rho(z) }{z}}{y} \\&= \spint{\mathbb{R}}{}{\spint{\mathbb{R}}{}{mn F(r,q) \, \rho(m(t-r)) \, \rho(n(a-q)) }{r}}{q}.
\end{split}
\end{equation}
The stochastic and classical dominated convergence theorems and the proof of Lemma \ref{equiv2} give convergence of the stochastic and local time-space integrals. Only the time integral remains. We can see that
\begin{equation}
\begin{split}
\spint{0}{T}{F^{m,n}_{t}(s,X_s)}{s} &= \spint{0}{T}{ \frac{\mathrm{d}}{\mathrm{d}s} \left. \left( \spint{\mathbb{R}}{}{\spint{\mathbb{R}}{}{mn F(r,q) \, \rho(m(s-r)) \, \rho(n(a-q)) }{r}}{q} \right) \right|_{a = X_s} }{s}  \\ &= \spint{0}{T}{ \left. \left( \spint{\mathbb{R}}{}{\spint{\mathbb{R}}{}{m^2n F(r,q) \, \rho'(m(s-r)) \, \rho(n(a-q)) }{r}}{q} \right) \right|_{a = X_s} }{s}  \\ &= \spint{0}{T}{ \left. \left( \spint{\mathbb{R}}{}{\spint{\mathbb{R}}{}{-mn F(r,q)  \, \rho(n(a-q)) }{_r \rho(m(s-r))}}{q} \right) \right|_{a = X_s} }{s} \\ &= \spint{0}{T}{  \left. \left( \spint{\mathbb{R}}{}{\spint{\mathbb{R}}{}{mn F_t(r,q) \, \rho(m(s-r)) \, \rho(n(a-q)) }{ \mu(r) }}{q} \right) \right|_{a = X_s} }{s} \\ &= \spint{0}{T}{ \spint{\mathbb{R}}{}{\spint{\mathbb{R}}{}{mn F_t(r,q) \, \rho(m(s-r)) \, \rho(n(X_s -q)) }{ \mu(r) }}{q} }{s}.
\end{split}
\end{equation}
Now make the substitution $u = m(s-r)$. We get
\begin{equation}
\begin{split}
\spint{0}{T}{F^{m,n}_{t}(s,X_s)}{s} &= \spint{\mathbb{R}}{}{\spint{\mathbb{R}}{}{ \spint{-mr}{m(T-r)}{ n F_t(r,q) \, \rho(u) \, \rho(n(X_{r + u/m} -q))}{u} }{ \mu(r) }}{q}.
\end{split}
\end{equation}
Now we take a limit as $m \to \infty$. Recalling that $X$ is continuous, we also note that the limits of the $u$ integral diverge to the same limit unless $0 \leq r < T$. Thus we have
\begin{equation}
\begin{split}
\spint{0}{T}{F^{n}_{t}(s,X_s)}{s} &= \spint{\mathbb{R}}{}{\spint{0}{T}{ n F_t(r,q) \, \rho(n(X_{r} -q)) }{ \mu(r) }}{q}.
\end{split}
\end{equation}
Exchanging the order of integration, changing variables, then taking a limit as $n \to \infty$ and using the left limits of $F_t$ in space gives
\begin{equation}
\begin{split}
\spint{\mathbb{R}}{}{\spint{0}{T}{ n F_t(r,q) \, \rho(n(X_{r} -q)) }{ \mu(r) }}{q} \to \spint{0}{T}{ F_t(r,X_r -) }{ \mu(r) } .
\end{split}
\end{equation}
This completes the proof.
\end{proof}

\begin{remark} \label{itodensity2-remark}
We may generalise (\ref{itodensity}) by replacing $\mu$ and $F_t$ by a finite sum of arbitrary Radon measures and densities, which are left continuous in space, and obey the obvious analogue of (\ref{itodensity}).
\end{remark}

\section{Continuity and properties of local time-space integration}

A key consequence of the classical It\^o formula is that the semimartingale decomposition of $F(B_t)$ is given explicitly. Specifically, the quadratic variation integral is of bounded variation. The same holds for the local time-space integral, except for the integral with respect to the space variable of local time, which cannot be of bounded variation in time in general.

\begin{theorem}
If $H$ is local time-space integrable, then expression (\ref{iterated2}), both integrals in (\ref{densityexp3eqn}) and the two-variable integral in (\ref{densityexpeqn}) are of bounded variation.
\end{theorem}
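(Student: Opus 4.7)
The plan is to bound the pathwise total variation in $T$ of each listed expression. A standard localization reduces to the case where the field of local times is uniformly bounded and compactly supported in a strip $[0,T] \times [-K,K]$; write $L^\infty := \sup_{u,a} |\lt{a}{u}|$.

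For expression (\ref{iterated2}), the measure $d_u \lt{a}{u}$ is non-negative, so for any partition $0 = T_0 < T_1 < \dots < T_n = T$,
\begin{equation*}
\sum_{i=0}^{n-1} \left| \int_{T_i}^{T_{i+1}} g(u,a) \, d_u \lt{a}{u} \right| \leq \int_0^T |g(u,a)| \, d_u \lt{a}{u}.
\end{equation*}
Integrating against $d|\nu|(a)$ and invoking Fubini-Tonelli, the total variation is bounded by $\int_\mathbb{R} \int_0^T |g(u,a)| \, d_u \lt{a}{u} \, d|\nu|(a)$, which is finite by local boundedness of $g$, compact support of the local times, and the Radon property of $|\nu|$. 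The two-variable integral in (\ref{densityexpeqn}) is handled identically: $T$ enters only as the upper limit of a $d\mu$-integral whose integrand (the inner Bouleau-Yor integral) is pathwise locally $|\mu|$-integrable. For the two-parameter Lebesgue-Stieltjes integral in (\ref{densityexp3eqn}), increments over $[T_1,T_2]$ are controlled by $L^\infty \cdot |d_{(u,a)}H|((T_1,T_2] \times \mathbb{R})$, so summing over a partition gives a bound from the finite Vitali variation of $H$.

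The main obstacle is the first integral in (\ref{densityexp3eqn}), namely $\Phi(T) := \int_\mathbb{R} \lt{a}{T} \, d_a H(T,a)$, in which both integrand and integrator depend on $T$. The strategy is to exploit the Vitali bounded variation of $H$ to separate the $T$-dependence via the structural identity
\begin{equation*}
H(T,a) = H(0,a) + \big[ H(T,-K) - H(0,-K) \big] + \int_{(0,T] \times (-K,a]} d_{(u,b)}H(u,b).
\end{equation*}
Differentiating in $a$ annihilates the constant-in-$a$ middle term and yields $d_a H(T,a) = d_a H(0,a) + d\sigma_T(a)$ where $\sigma_T(A) := \int_{(0,T] \times A} d_{(u,b)}H$, so $\Phi = \Phi_1 + \Phi_2$ with $\Phi_1(T) = \int_\mathbb{R} \lt{a}{T} \, d_a H(0,a)$ and $\Phi_2(T) = \int_{(0,T] \times \mathbb{R}} \lt{a}{T} \, d_{(u,a)}H(u,a)$. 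For $\Phi_1$, the integrating measure $dH(0,\cdot)$ is fixed and $\lt{a}{\cdot}$ is non-decreasing in $T$; telescoping gives $\sum_i |\Phi_1(T_{i+1}) - \Phi_1(T_i)| \leq \int_\mathbb{R} \lt{a}{T} \, d|H(0,\cdot)|(a) < \infty$.

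For $\Phi_2$, decompose $\Phi_2(T_2) - \Phi_2(T_1)$ as the integral of $\lt{a}{T_2}$ against $d_{(u,a)}H$ on the strip $(T_1,T_2] \times \mathbb{R}$ plus the integral of $\lt{a}{T_2} - \lt{a}{T_1}$ against $d_{(u,a)}H$ on $(0,T_1] \times \mathbb{R}$. The first contribution summed over a partition is at most $L^\infty \cdot |d_{(u,a)}H|((0,T] \times \mathbb{R})$. The key technical step is the second: since the increments $\lt{a}{T_{i+1}} - \lt{a}{T_i}$ are non-negative, Fubini-Tonelli legitimately exchanges the sum over the partition with integration against $d|d_{(u,a)}H|$; the inner sum then telescopes to $\lt{a}{T}$, yielding again a bound of $L^\infty \cdot |d_{(u,a)}H|((0,T] \times \mathbb{R})$. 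Combining $\Phi_1$ and $\Phi_2$ gives bounded variation for $\Phi$, completing the proof.
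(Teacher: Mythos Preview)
Your proof is correct and follows the same overall strategy as the paper: bound each expression by the supremum of the local time together with the relevant variation of $H$ (one-dimensional, Vitali, or total variation of $\mu$/$\nu$). For expression (\ref{iterated2}) and the two-variable integral in (\ref{densityexpeqn}) your argument is essentially identical to the paper's.

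Where you differ is in the treatment of the first integral in (\ref{densityexp3eqn}), $\Phi(T)=\int_{\mathbb{R}}\lt{a}{T}\,d_aH(T,a)$. The paper disposes of both integrals in (\ref{densityexp3eqn}) in a single sentence, asserting that the variation is controlled by the supremum of the local time and the Vitali variation of $H$, without isolating the difficulty that in $\Phi(T)$ both integrand and integrator depend on $T$. Your decomposition $d_aH(T,\cdot)=d_aH(0,\cdot)+d\sigma_T$, with the telescoping/Fubini argument for $\Phi_2$, is a clean and correct way to substantiate that assertion; in effect you have supplied the missing justification. Note also that your bound for $\Phi_1$ uses the one-dimensional variation of $a\mapsto H(0,a)$, which is exactly the auxiliary hypothesis in Definition~\ref{densityexp3} and is not captured by the Vitali variation alone---so your argument is in fact slightly sharper than the paper's stated bound.
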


\begin{proof}
Take a partition $\pi$ of $[0,T]$, such that $t_0 = 0$ and $t_N = T$. Dealing with (\ref{iterated2}),
\begin{equation}
\begin{split} 
&\sum_{t_i \in \pi} \left| \int_\mathbb{R} \spint{t_i}{t_{i+1}}{g(u,a)}{_u \lt{a}{u}} \, \mathrm{d} \nu(a) \right| \leq \int_\mathbb{R}  \sum_{t_i \in \pi}  (\lt{a}{t_{i+1}} - \lt{a}{t_i}) \sup_{t_i \leq u \leq t_{i+1}} |g(u,a)| \, \mathrm{d} |\nu|(a) \\ & \leq \int_\mathbb{R} \lt{a}{T} \sup_{0 \leq u \leq T} |g(u,a)| \, \mathrm{d} |\nu|(a).
\end{split}
\end{equation}
By continuity of the underlying semimartingale, the local time is almost surely compactly supported. Thus the right-hand integral may be taken over a compact set, and so the right hand side is finite and independent of the partition. 

The variation of (\ref{densityexp3eqn}) over any partition is bounded by the supremum of the local time, which is almost-surely bounded pathwise, and the Vitali variation of $H$.

Finally, as $h$ is bounded in (\ref{densityexpeqn}), and $\mu$ is of finite total variation on compacts, the two-variable integral is of bounded variation.
\end{proof}

It is natural to be concerned with properties of the map $H \mapsto \Lambda(H)$, meaning continuity or a type of dominated convergence theorem.

\begin{theorem} \label{ctsinintegrand}
Let $H^n$ and $H$ be a sequence of local time-space integrable functions satisfying any one of the following conditions:
\begin{enumerate}
\item $H^n$ and $H$ satisfy Definition \ref{densityexp}, and there exists a measure $\gamma$ such that $\nu^n \ll \gamma$ and $\nu \ll \gamma$, where the densities $g^n \frac{\mathrm{d}\nu^n}{\mathrm{d}\gamma}$ are uniformly locally bounded and converge pointwise to $g \frac{\mathrm{d}\nu}{\mathrm{d}\gamma}$;
\item $H^n$ and $H$ satisfy Definition \ref{densityexp2}, and there exists a measure $\gamma$ such that $\mu^n \ll \gamma$ and $\mu \ll \gamma$, where the densities $h^n \frac{\mathrm{d}\mu^n}{\mathrm{d}\gamma}$ are uniformly locally bounded and converge pointwise to $h \frac{\mathrm{d}\mu}{\mathrm{d}\gamma}$;
\item $H^n$ and $H$ satisfy Definition \ref{densityexp3}, with uniformly locally-bounded total variation, and $H^n \to H$ pointwise.
\end{enumerate}
Then we have
\begin{equation}
\int_\mathbb{R} \, \spint{0}{T}{H^n(s,a)}{\lt{a}{s}(X)} \to \int_\mathbb{R} \, \spint{0}{T}{H(s,a)}{\lt{a}{s}(X)},
\end{equation}
where convergence holds uniformly on compacts in probability.
\end{theorem}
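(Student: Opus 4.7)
The plan is to treat the three items separately, in each case using the explicit representation of $\Lambda(H^n)$ from the corresponding definition and reducing to a form of dominated convergence. As a preliminary step, I would localise the underlying semimartingale so that $\lt{}{}(X)$ is pathwise compactly supported on $[0,T] \times [-K,K]$; this permits replacing convergence in probability by almost-sure convergence along a subsequence and ensures all outer integrals are over compact sets.

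For items~1 and~2, the strategy is symmetric. Taking the time-density case first, I would express
\begin{equation*}
\Lambda(H^n) = \spint{\mathbb{R}}{}{H^n(T,a)}{_a \lt{a}{T}} - \int_0^{T-} \Big( \spint{\mathbb{R}}{}{h^n(u,a) \tfrac{\mathrm{d}\mu^n}{\mathrm{d}\gamma}(u)}{_a \lt{a}{u}} \Big) \, \mathrm{d}\gamma(u).
\end{equation*}
The inner Bouleau--Yor integrals of uniformly locally-bounded, pointwise-convergent integrands converge in probability: this follows from the defining identity~(\ref{bouleau-yor2}) combined with stochastic dominated convergence applied to the stochastic integral term. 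A subsequent application of classical dominated convergence against $\mathrm{d}\gamma(u)$, valid by the uniform local bound and the compact support, handles the double integral. The first term is controlled analogously, using that $H^n(T,\cdot) \to H(T,\cdot)$ pointwise by the density representation. The space-density case is handled the same way after writing $\mathrm{d}\nu^n = (\mathrm{d}\nu^n/\mathrm{d}\gamma) \, \mathrm{d}\gamma$ and noting that for each fixed $(a,\omega)$, $u \mapsto \lt{a}{u}(X)$ is continuous and non-decreasing, so the $\mathrm{d}_u \lt{a}{u}$ integral is a pathwise Lebesgue--Stieltjes integral to which classical dominated convergence directly applies.

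For item~3, uniformly locally-bounded Vitali variation together with the pointwise convergence $H^n \to H$ implies weak convergence of the two-dimensional signed Borel measures $\mathrm{d}_{(u,a)} H^n \to \mathrm{d}_{(u,a)} H$, and similarly $\mathrm{d}_a H^n(T,\cdot) \to \mathrm{d}_a H(T,\cdot)$. Since $\lt{a}{u}(X)$ is continuous in $u$ and right continuous with left limits in $a$, its discontinuity set in $a$ is countable pathwise, and the finite limit measures charge at most countably many points. A Helly--Bray argument, or equivalently uniform approximation of $\lt{}{}$ by step functions as supplied by Lemma~\ref{uniformlemma}, then yields pathwise convergence of the Lebesgue--Stieltjes integrals appearing in~(\ref{densityexp3eqn}).

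Uniformity on compacts in $T$ is built into the arguments, since all bounds depend on $T$ only through monotone quantities like $\sup_{0 \leq t \leq T} \lt{a}{t}(X)$ and the uniform-on-compacts bounds on the integrands. The principal obstacle is item~3: extracting weak convergence of the two-dimensional signed measures $\mathrm{d}_{(u,a)} H^n$ from pointwise convergence alone, and then passing to the limit in the Stieltjes integral against the spatially discontinuous integrand $\lt{a}{u}$, are the most delicate technical steps and likely require a careful Helly-type compactness argument specific to planar bounded-variation functions.
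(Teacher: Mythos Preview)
Your treatment of items~1 and~2 matches the paper's: both reduce to the deterministic and stochastic dominated convergence theorems applied to the explicit representations, and your localisation step is the same compact-support reduction the paper uses throughout.

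For item~3 you are close but take a detour. The paper does not establish weak convergence of the planar signed measures $\mathrm{d}_{(u,a)}H^n$ at all, and thereby avoids the delicate issue you flag of integrating a spatially discontinuous $\lt{a}{u}$ against a weakly convergent sequence. Instead it applies Lemma~\ref{uniformlemma} directly: given $\epsilon>0$, choose a step function $\psi$ with $\|\lt{}{}-\psi\|_\infty<\epsilon$ and split
\[
\Big|\textstyle\iint \lt{}{}\,\mathrm{d}H^n - \iint \lt{}{}\,\mathrm{d}H\Big|
\le \epsilon\big(\mathrm{TV}(H^n)+\mathrm{TV}(H)\big) + \Big|\textstyle\iint \psi\,\mathrm{d}(H^n-H)\Big|.
\]
The first term is controlled by the uniform variation bound; the second is a \emph{finite} sum of point evaluations $H^n(t_i,x_j)-H(t_i,x_j)$ (with signs), which vanishes by pointwise convergence alone. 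This is exactly the argument at~(\ref{pwconveqn}) in the proof of Lemma~\ref{equiv2}, and it sidesteps both Helly compactness and the discontinuity obstruction you identify. You already name Lemma~\ref{uniformlemma} as an alternative route; the point is that it is the \emph{whole} route, not a substitute for a Helly--Bray step.
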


\begin{proof}
The final case follows in the same way as the final part of the proof of Lemma \ref{equiv2}, namely (\ref{pwconveqn}) with $H^{m,n}$ there replaced by $H^n$.
The other two cases are straightforward applications of the deterministic and stochastic dominated convergence theorems.
\end{proof}

We are motivated by \cite{protter93} to also consider some form of continuity depending on the underlying semimartingale. Let us first introduce a type of localisation. Given a sequence of semimartingales $(X^n)_{n \geq 1}$, we will refer to the bounded variation and local martingale components of $X^n$ as $A^n$ and $M^n$ respectively (which vanish at time zero almost surely). We define $\mathcal{H}^1$ to be the set of continuous semimartingales $X$, with decomposition $X_0 + M+A$, such that $\| X \|_{\mathcal{H}^1} = \mathbb{E} \left [ \left< M,M \right>_\infty^{1/2} + \int_0^\infty  | \mathrm{d} A_s | \, \right]$ is finite.

\begin{definition}  \label{localh1}
We say that a sequence of semimartingales $\left( X^n \right)_{n \geq 1}$ are uniformly locally in $\mathcal{H}^1$ if there exists a sequence of stopping times $T_m$, increasing to infinity almost surely, such that for all $n \in \mathbb{N}$ we have $\| X^n_{\, \cdot \,  \land T_m} \|_{\mathcal{H}^1} \leq K(m)$ for some constant $K(m)$ independent of $n$. \end{definition}

Note that all continuous semimartingales are locally in $\mathcal{H}^1$. Asking that a sequence be uniformly locally in $\mathcal{H}^1$ prevents the following situation. Take $X^n$ to be a Brownian motion starting at zero, reflected between barriers at $0$ and $1/n$. The quadratic variation of each $X^n$ is independent of $n$, and as $n \to \infty$ the $X^n$ converge uniformly on compacts in probability to the zero process. One may see that $\int_0^t  | \mathrm{d} A^n_s |$ is unbounded in $n$ for any fixed $t$, so the sequence does not obey Definition \ref{localh1}. In general, however, we have the following result, due to Barlow and Protter \cite{barlow90}.

\begin{lemma}[\cite{barlow90} Thm 1, Corr 2] \label{barlowresult}
Let $X^n$ be a sequence of semimartingales in $\mathcal{H}^1$,  such that $\spint{0}{\infty}{| \!}{A^n_s|} \leq K$ for some non-random constant $K$. Assume that there is some process $X$ such that $\mathbb{E} \left[ (X^n - X)^* \right] \to 0$. Then $X$ is a semimartingale, and both $\lim_{n \to \infty} \| (M^n - M)^* \|_{\mathcal{H}^1} = 0$ and $\mathbb{E} \left[ (A^n - A)^* \right] \to 0$, with $\spint{0}{\infty}{| \!}{A_s|} \leq K$.
\end{lemma}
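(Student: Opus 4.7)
The plan is to combine a compactness argument on the finite-variation parts with the closedness of local martingales in a suitable Banach space. The uniform bound $\int_0^\infty | \mathrm{d} A^n_s | \leq K$ provides compactness for the $A^n$, while the control $\mathbb{E}[(X^n - X)^*] \to 0$ will transfer to the martingale pieces once a limiting $A$ has been identified and subtracted off.

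First I would pass, via a Koml\'os-type diagonal argument, to a sequence of convex combinations $\tilde A^n = \sum_{k \geq n} \lambda_k^n A^k$ converging almost surely, uniformly on compact time intervals, to some adapted process $A$. Lower semicontinuity of total variation then yields $\int_0^\infty |\mathrm{d} A_s | \leq K$. The corresponding combinations $\tilde X^n = \sum_{k \geq n} \lambda_k^n X^k$ still satisfy $\mathbb{E}[(\tilde X^n - X)^*] \to 0$, so $\tilde M^n := \tilde X^n - X_0 - \tilde A^n$ converges to $M := X - X_0 - A$ in $\mathbb{E}[(\cdot)^*]$.

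Next I would argue that $M$ is a local martingale. Each $\tilde M^n$ is a convex combination of members of $\mathcal{H}^1$, hence itself in $\mathcal{H}^1$, and the Burkholder--Davis--Gundy inequality controls its $\mathcal{H}^1$ norm via the uniform bound on $\tilde A^n$ and the $L^1$ supremum of $\tilde X^n$. After localising by a common sequence of stopping times, the closedness of the space of local martingales under $\mathcal{H}^1$ convergence identifies $M$ as a local martingale with $\|\tilde M^n - M\|_{\mathcal{H}^1} \to 0$. The semimartingale decomposition $X = X_0 + M + A$ is then established. To pass from convex combinations to the full sequence I would invoke uniqueness of the canonical decomposition: any subsequence of $(M^n, A^n)$ admits a further subsequence along which the Koml\'os construction yields some limit $(M', A')$; but then $X = X_0 + M' + A'$, so $M' = M$ and $A' = A$, forcing $\|(M^n - M)^*\|_{\mathcal{H}^1} \to 0$ and $\mathbb{E}[(A^n - A)^*] \to 0$ along the whole sequence.

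The main obstacle is the first step: upgrading the weak compactness of the signed measures $\mathrm{d} A^n$ (which is immediate from their uniformly bounded total variation) to genuine pathwise uniform convergence of the primitives $\tilde A^n$ to an \emph{adapted} limit. This is the technical heart of Barlow and Protter's argument, requiring a careful selection simultaneously for almost every $\omega$ so that adaptedness and the total variation control are preserved in the limit; once that is in hand, the rest of the argument is driven by uniqueness of the Doob--Meyer decomposition and the Banach-space structure of $\mathcal{H}^1$.
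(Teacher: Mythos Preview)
The paper does not give its own proof of this lemma; it is quoted verbatim from Barlow and Protter \cite{barlow90} and used as a black box. So there is no ``paper's proof'' to compare against, only the original source.

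That said, your proposed route has a genuine gap, and it is not the one you flag. The difficulty is \emph{not} in producing an adapted limit $A$ from the Koml\'os selection; it is in the final step, where you pass from convergence of convex combinations back to convergence of the original sequence. Your argument is: every subsequence of $(M^n,A^n)$ admits a further subsequence along which \emph{convex combinations} converge to the unique pair $(M,A)$. But this does not force $M^n\to M$ or $A^n\to A$. The standard sub-subsequence principle requires that along the further subsequence the \emph{terms themselves} converge, not merely averages of them; otherwise the alternating sequence $a_n=(-1)^n$ would converge to $0$. Once you know $X=X_0+M+A$ with $\int|\mathrm{d}A|\le K$, you still face exactly the original problem for $X^n-X=(M^n-M)+(A^n-A)$: a small semimartingale whose finite-variation part has total variation bounded only by $2K$, and you must show its martingale part is small. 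Nothing in the Koml\'os machinery supplies that.

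What closes this is precisely the quantitative estimate that drives Barlow and Protter's own argument: for a continuous semimartingale $Y=N+B$ with $\int_0^\infty|\mathrm{d}B_s|\le K$ almost surely, one has a bound of the form
\[
\mathbb{E}\big[\langle N\rangle_\infty^{1/2}\big]\ \le\ c\,\mathbb{E}[Y^*_\infty]+c\big(K\,\mathbb{E}[Y^*_\infty]\big)^{1/2},
\]
so that the right-hand side tends to zero with $\mathbb{E}[Y^*_\infty]$ even though $K$ stays fixed. Applying this to $Y=X^n-X^m$ shows directly that $(M^n)$ is Cauchy in $\mathcal{H}^1$, after which $A^n=X^n-X_0^n-M^n$ converges and lower semicontinuity of total variation gives $\int|\mathrm{d}A|\le K$. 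No compactness or convex combinations are needed. If you want to salvage your approach, you will at some point need an inequality of this type anyway, at which point the direct Cauchy argument is shorter.
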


We now impose stronger conditions on a sequence of semimartingales.

\begin{definition}
A sequence of semimartingales $X^n$ will be called admissible if;
\begin{enumerate}
\item For each $n\in \mathbb{N}$, $X^n_0 = 0$ almost surely.
\item There exists a sequence of stopping times $T_m$ such that $T_m \uparrow \infty$ almost surely, and
\begin{equation}
\sup_{s \in \mathbb{R}_+} \left| X^n_{T_m \land s} \right| < K(m)
\end{equation}
almost surely, for some constant $K(m)$ independent of $n$.
\item The sequence $X^n$ is locally uniformly in $\mathcal{H}^1$.
\end{enumerate}
\end{definition}

We now determine when convergence of an admissible sequence implies convergence of the associated local time-space integrals.

\begin{theorem} \label{ctsinsemimg}
Let $X^n$ be a sequence of admissible semimartingales converging to a semimartingale $X$ uniformly on compacts in probability. Let $H$ be local time-space integrable in the sense of Definition \ref{densityexp2} or Definition \ref{densityexp3}. Assume that $\left| \mathrm{d} A^n \right|$ converges weakly in probability to some (random) measure $\lambda$, meaning
\begin{equation}
\spint{0}{t}{f(s) |\!}{A^n_s|} \to \spint{0}{t}{f(s) }{\lambda(s)}
\end{equation}
in probability, for each continuous function $f: \mathbb{R}_+ \to \mathbb{R}$ and $t \in \mathbb{R}_+$. If for all  $a \in \mathbb{R}$
\begin{equation} \label{ctsassump}
\mathbb{E} \left[ \spint{0}{t}{\indic{X_s = a} }{\lambda(s)} \right] = 0
\end{equation} 
for all $t \in \mathbb{R}_+$, then we have
\begin{equation}
\int_\mathbb{R} \, \spint{0}{T}{H(s,a)}{\lt{a}{s}(X^n)} \to \int_\mathbb{R} \, \spint{0}{T}{H(s,a)}{\lt{a}{s}(X)},
\end{equation}
where convergence holds uniformly on compacts in probability.
\end{theorem}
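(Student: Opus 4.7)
The plan is to exploit the two explicit representations given in Definitions \ref{densityexp2} and \ref{densityexp3} to reduce the problem to a convergence statement for the local time field itself, which will then come from Lemma \ref{convergenceoflt}. First I would apply admissibility to choose stopping times $T_m \uparrow \infty$ almost surely, so that the stopped processes $X^n_{\cdot \wedge T_m}$ are uniformly bounded by $K(m)$ and uniformly in $\mathcal{H}^1$. Since the local times vanish outside a compact spatial window, and ucp convergence is a local notion, a standard stopping argument reduces the statement to the stopped processes. With this reduction in hand, the hypotheses of Lemma \ref{convergenceoflt} are satisfied on each $[0,T]$, namely ucp convergence of $X^n \to X$, uniform $\mathcal{H}^1$ bound, weak convergence of $|\mathrm{d} A^n|$ to $\lambda$, and the non-charging condition (\ref{ctsassump}). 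This yields $\lt{a}{t}(X^n) \to \lt{a}{t}(X)$ in probability for every $(t,a)$. Because $t \mapsto \lt{a}{t}(X^n)$ is continuous and non-decreasing with continuous limit, a Dini-type argument upgrades this pointwise convergence to uniform convergence in $t$ on $[0,T]$ in probability, for each fixed $a$.

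For $H$ satisfying Definition \ref{densityexp3}, the integral equals the sum of two Lebesgue-Stieltjes integrals of $\lt{a}{u}$ against the signed measures $d_a H(T,\cdot)$ and $d_{(u,a)} H$, both of finite total variation on $[0,T] \times [-K(m), K(m)]$. After localisation, the integrands $\lt{a}{u}(X^n)$ are uniformly bounded in probability using the $\mathcal{H}^1$ estimate applied to the stopped $X^n$, and converge pointwise in probability by the preceding step, so dominated convergence gives ucp convergence of each term.

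For $H$ satisfying Definition \ref{densityexp2}, the integral reads $-\int_\mathbb{R} \int_0^T g(u,a)\, d_u \lt{a}{u}(X^n)\, d\nu(a)$. For each fixed $a$, the continuous non-decreasing functions $\lt{a}{\cdot}(X^n)$ converge uniformly in probability on $[0,T]$ to the continuous limit $\lt{a}{\cdot}(X)$, so the associated atomless measures $d_u \lt{a}{u}(X^n)$ converge weakly in probability to $d_u \lt{a}{u}(X)$. Since $g(\cdot, a)$ is bounded and left continuous, I would approximate it uniformly on $[0,T]$ by left-continuous step functions, for which the inner integral reduces to a sum of local-time increments that converges by the pointwise convergence of $\lt{a}{\cdot}(X^n)$. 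A three-epsilon argument then yields convergence of the inner integral in probability, and a dominated convergence argument in the outer variable, with uniform bound $\sup_u |g(u,a)| \cdot \lt{a}{T}(X^n)$ on the compact support of $\nu$, completes this case.

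The principal obstacle lies upstream, in Lemma \ref{convergenceoflt}: the non-charging condition (\ref{ctsassump}) is needed precisely to prevent the bounded-variation components $A^n$ from concentrating mass on the level sets $\{X = a\}$ in the limit, which would otherwise obstruct local time convergence. Granted that lemma, the remaining work here is a careful but essentially standard combination of dominated convergence, Fubini, and weak convergence of monotone step approximations, organised according to which of the two representations applies.
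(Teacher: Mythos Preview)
Your overall architecture matches the paper's: localise via the admissibility stopping times, invoke Lemma \ref{convergenceoflt} to get convergence of local times, then pass to the limit in each of the two explicit representations via a step-function approximation and dominated convergence. Two points, though.

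First, a minor redundancy: Lemma \ref{convergenceoflt} already asserts convergence of $\lt{a}{\cdot}(X^n)$ to $\lt{a}{\cdot}(X)$ \emph{uniformly on compacts in probability in $s$}, so your Dini upgrade is not needed.

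Second, and more important, your dominated convergence step for the outer integral is not quite closed. In the Definition \ref{densityexp2} case you propose the bound $\sup_u |g(u,a)|\cdot \lt{a}{T}(X^n)$, which depends on $n$ and is therefore not a dominating function; the $\mathcal{H}^1$ estimate gives only a uniform bound on $\mathbb{E}[\lt{a}{T}(X^n)]$, not a pathwise one. The paper handles this by taking expectations \emph{before} applying Fubini: restricting to $\{T_m \geq T\}$, one first derives from Tanaka and the $\mathcal{H}^1$ bound that $\sup_n \sup_{(t,a)} \mathbb{E}[\lt{a}{t}(X^n)] = C < \infty$, then bounds
\[
\mathbb{E}\Big[\Big|\int_\mathbb{R}\int_0^T g\, d_s(\lt{a}{s}(X^n)-\lt{a}{s}(X))\, d\nu(a)\Big|\,\indic{T_m \geq T}\Big]
\leq \int_\mathbb{R} \mathbb{E}\Big[\Big|\int_0^T g\, d_s(\lt{a}{s}(X^n)-\lt{a}{s}(X))\Big|\indic{T_m \geq T}\Big]\, |d\nu|(a).
\]
For fixed $a$ the inner expectation is handled exactly as you describe (step-function approximation of $g(\cdot,a)$, then Lemma \ref{convergenceoflt} plus boundedness to upgrade convergence in probability to convergence in expectation), and the outer integral now has the $n$-independent dominating function $2C\sup_u|g(u,a)|$. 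The same expectation-first manoeuvre works for Definition \ref{densityexp3}. With that adjustment your argument goes through and coincides with the paper's.
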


\begin{proof}
Take a sequence of stopping times $T_m$ with $T_m \uparrow \infty$ almost surely, such that
\begin{equation}
\sup_{s \in \mathbb{R}+} \left| X^n_{s \wedge T_m} \right| < K(m)
\hspace{15pt} \text{ and } \hspace{15pt}
\| X^n_{\cdot \, \wedge T_m} \|_{\mathcal{H}^1} < K(m)
\end{equation} 
almost surely for some $K(m)$ independent of $n$. Fix $T \in \mathbb{R}_+$ and take $M \in \mathbb{N}$ such that $\mathbb{P} \left( T_m < T \right) < \epsilon$ for all $m \geq M$. On the complement of this set, that is on $\left\{T_m \geq T \right\}$, $\lt{a}{t}(X^n)$ is compactly supported in $[0,T] \times [-K(M),K(M)]$. Again on $\left\{T_m \geq T \right\}$, using the Tanaka formula and triangle inequality, we can show
\begin{equation}
C = \sup_n \left( \sup_{[0,T] \times [-K,K]} \mathbb{E} \left[ \lt{a}{t}(X^n) \right] \right) < \infty.
\end{equation}
It follows that the same bound holds for $X$ in place of $X^n$ on the set $\left\{T_m \geq T\right\}$ with the same constant $C$ by Lemma \ref{barlowresult}.

Beginning with Definition \ref{densityexp2}, by elementary bounds we have
\begin{equation} \label{originalexpression}
\begin{split}
&\left|\int_\mathbb{R} \, \spint{0}{T}{g(s,a)}{_s \lt{a}{s}(X^n)} \, \mathrm{d} \nu(a) - \int_\mathbb{R} \, \spint{0}{T}{g(s,a)}{_s \lt{a}{s}(X)} \, \mathrm{d} \nu(a) \right| \\
&\leq \spint{\mathbb{R}}{}{\left| \int_0^T \! g(s,a) \, \mathrm{d}_s (\lt{a}{s}(X^n) - \lt{a}{s}(X))\right| }{|\nu|(a)} .
\end{split}
\end{equation}
Restricting to the set $\left\{ T_m \geq T \right\}$, take an expectation and apply Fubini's theorem. We then determine the limit as $n \to \infty$ of 
\begin{equation} \label{expectationexpression}
\mathbb{E} \left[ \left| \int_0^T \! g(s,a) \, \mathrm{d}_s (\lt{a}{s}(X^n) - \lt{a}{s}(X))\right| \indic{T_m \geq T}\right],
\end{equation} 
for each fixed $a$. Fix $\delta > 0$. As $g$ is regulated in $s$, we may choose some left continuous step function $\psi$ such that $\| g(\cdot,a) - \psi \| < \delta/2C$. Using the triangle inequality, we may bound (\ref{expectationexpression}) by
\begin{equation} 
\begin{split}
&\mathbb{E} \left[ \left| \int_0^T \! \big( g(s,a) - \psi(s) \big) \, \mathrm{d}_s (\lt{a}{s}(X^n) - \lt{a}{s}(X))\right| \indic{T_m \geq T} \right] \\ &+ \mathbb{E} \left[ \left| \int_0^T \!  \psi(s) \, \mathrm{d}_s (\lt{a}{s}(X^n) - \lt{a}{s}(X))\right| \indic{T_m \geq T} \right].
\end{split}
\end{equation} 
The former term is bounded by $2 \delta$. We use the following Lemma \ref{convergenceoflt} to see that the latter term is bounded and converges to zero in probability as $n \to \infty$, giving convergence in expectation. Now we may apply the dominated convergence theorem to the expression
\begin{equation}
\spint{\mathbb{R}}{}{\mathbb{E} \left[ \left| \int_0^T \! g(s,a) \, \mathrm{d}_s (\lt{a}{s}(X^n) - \lt{a}{s}(X))\right| \indic{T_m \geq T}\right] }{|\nu|(a)},
\end{equation}
to obtain convergence to zero. This gives convergence in probability of (\ref{originalexpression}).

A more straightforward version of the same method gives the result for Definition \ref{densityexp3}.
\end{proof}

Now we establish conditions for convergence in probability of the local times. Afterwards, we shall consider ways to verify the assumptions of the following lemma which are useful for us, but by no means exhaustive. 
\begin{lemma} \label{convergenceoflt}
In the setting of Theorem \ref{ctsinsemimg}, we have $\lt{a}{s}(X^n) \to \lt{a}{s}(X)$ uniformly on compacts in probability in $s$, for each fixed $a \in \mathbb{R}$.
\end{lemma}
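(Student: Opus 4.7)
The strategy is to invoke Tanaka's formula to reduce the problem to convergence of stochastic integrals, then handle the discontinuity of the signum function by Lipschitz approximation, drawing on Lemma \ref{barlowresult} and hypothesis (\ref{ctsassump}). Since $X^n_0 = 0$ almost surely by admissibility, and hence $X_0 = 0$ almost surely, Tanaka's formula applied to $X^n$ and $X$ yields
\[
\lt{a}{s}(X^n) - \lt{a}{s}(X) = \big( |X^n_s - a| - |X_s - a| \big) - \left( \int_0^s \text{sgn}(X^n_r - a) \, \mathrm{d} X^n_r - \int_0^s \text{sgn}(X_r - a) \, \mathrm{d} X_r \right).
\]
The difference of absolute values tends to $0$ ucp by the reverse triangle inequality, reducing the problem to ucp convergence of the bracketed stochastic integral difference.

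Decomposing $X^n = M^n + A^n$ and $X = M + A$, then adding and subtracting $\int_0^s \text{sgn}(X_r - a) \, \mathrm{d} M^n_r$ and $\int_0^s \text{sgn}(X_r - a) \, \mathrm{d} A^n_r$ splits the difference into four pieces. The two pieces with the fixed bounded integrand $\text{sgn}(X_r - a)$ integrated against $\mathrm{d}(M^n - M)$ and $\mathrm{d}(A^n - A)$ are handled directly: Lemma \ref{barlowresult} gives $\|(M^n - M)^*\|_{\mathcal{H}^1} \to 0$, so the Burkholder-Davis-Gundy (BDG) inequality controls the martingale piece, while uniform convergence $A^n \to A$ with uniformly bounded total variation implies weak convergence $\mathrm{d}A^n \to \mathrm{d}A$, which together with a Lipschitz approximation of $\text{sgn}(X_\cdot - a)$ handles the BV piece (with residual controlled as in the last paragraph).

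The two remaining pieces, with integrand $\text{sgn}(X^n_r - a) - \text{sgn}(X_r - a)$, form the core difficulty: the signum is discontinuous at $a$ and $X^n_r - X_r$ need not vanish at that level. I would smooth sgn by a Lipschitz $\phi_\delta$ agreeing with $\text{sgn}$ outside $[-\delta,\delta]$ and decompose
\[
\text{sgn}(X^n_r - a) - \text{sgn}(X_r - a) = \big[ \phi_\delta(X^n_r - a) - \phi_\delta(X_r - a) \big] + E^n_r - E_r,
\]
where $E^n_r := \text{sgn}(X^n_r - a) - \phi_\delta(X^n_r - a)$ is bounded by $2$ and supported in $\{|X^n_r - a| \leq \delta\}$. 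For fixed $\delta$, the Lipschitz piece integrated against $\mathrm{d}M^n$ or $\mathrm{d}A^n$ converges to $0$ ucp by Lipschitz continuity of $\phi_\delta$, ucp convergence $X^n \to X$, and BDG or the uniform total variation bound. The $\mathrm{d}M^n$ error is handled by BDG together with the occupation-time formula (\ref{otf}): on the localising event $\{T_m \geq T\}$,
\[
\mathbb{E} \left[ \int_0^T \mathbbm{1}_{\{|X^n_r - a| \leq \delta\}} \, \mathrm{d}\langle M^n\rangle_r \right] = \mathbb{E} \left[ \int_{a - \delta}^{a + \delta} \lt{y}{T}(X^n) \, \mathrm{d}y \right] \leq 2 \delta \sup_{n, y} \mathbb{E} \big[ \lt{y}{T}(X^n) \big],
\]
where the final supremum is finite by Tanaka's formula and the uniform $\mathcal{H}^1$ bound, so this term is $O(\delta^{1/2})$ after BDG.

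The hard part will be the error against $\mathrm{d}|A^n|$, where the occupation-time shortcut is unavailable; this is precisely where hypothesis (\ref{ctsassump}) enters. Dominate $\mathbbm{1}_{[-\delta,\delta]}$ from above by a continuous $\psi_\delta \leq \mathbbm{1}_{[-2\delta,2\delta]}$. For $n$ large, $(X^n - X)^*_T < \delta$ with high probability, whence $\mathbbm{1}_{\{|X^n_r - a| \leq \delta\}} \leq \psi_\delta(X_r - a)$. Weak convergence $|\mathrm{d}A^n| \to \lambda$ with $r \mapsto \psi_\delta(X_r - a)$ continuous yields
\[
\int_0^T \psi_\delta(X_r - a) \, \mathrm{d}|A^n|_r \; \xrightarrow{\; \mathbb{P} \;} \; \int_0^T \psi_\delta(X_r - a) \, \mathrm{d}\lambda(r),
\]
and the latter tends to $\int_0^T \mathbbm{1}_{\{X_r = a\}} \, \mathrm{d}\lambda(r)$ as $\delta \to 0$ by dominated convergence; hypothesis (\ref{ctsassump}) forces this final quantity to vanish in $L^1$. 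Sending $n \to \infty$ and then $\delta \to 0$ controls all four pieces simultaneously and delivers the claimed ucp convergence.
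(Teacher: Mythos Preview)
Your proof is correct and follows the same overall architecture as the paper: expand via Tanaka, split the stochastic-integral difference into four pieces by adding and subtracting cross terms, control the ``fixed integrand'' pieces using Lemma~\ref{barlowresult} and BDG, and control the ``integrand difference'' pieces by isolating the contribution near the level $a$, invoking the occupation-time formula for the martingale part and hypothesis~(\ref{ctsassump}) for the bounded-variation part. Your add--subtract is $\int \mathrm{sgn}(X_r-a)\,\mathrm{d}X^n_r$ rather than the paper's mixed choice, but this is cosmetic.

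The one genuine tactical difference is in the bounded-variation error term. The paper decomposes $\{X_s\neq a\}$ into dyadic shells $U_i=\{2^{-(i+1)}\le |X_s-a|<2^{-i}\}$, on each of which the sign difference is eventually zero, and then sums; the residual on $\{X_s=a\}$ is dispatched via~(\ref{ctsassump}). You instead smooth $\mathrm{sgn}$ by a Lipschitz $\phi_\delta$, bound the support of the error by a continuous $\psi_\delta(X_r-a)$, and pass through the weak limit $|\mathrm{d}A^n|\to\lambda$ before sending $\delta\to 0$. Your route is slightly cleaner in that it uses the weak-convergence hypothesis on $|\mathrm{d}A^n|$ directly and makes the role of~(\ref{ctsassump}) transparent; the paper's shell argument avoids naming $\lambda$ in this step but is a bit more delicate at the level set. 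Two small points to make explicit in a full write-up: handle both error pieces $E^n$ and $E$ (not just $E^n$), using $\{|X_r-a|\le\delta\}\subset\{|X^n_r-a|\le 2\delta\}$ on the high-probability event; and state the localisation reduction up front rather than invoking $\{T_m\ge T\}$ mid-proof.
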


\begin{proof}
First let us assume that $X^n$ and $X$ are uniformly bounded in $\mathcal{H}^1$, with $\mathbb{E} \left[ (X^n - X)^* \right] \to 0$. This gives that $\mathbb{E} \left[ \left< M^n -M, M^n - M \right>_\infty^\frac{1}{2} \right] \to 0$ and $\mathrm{d}A^n \to \mathrm{d}A$ weakly in expectation. Furthermore, $\mathrm{d}A^n$ and $\mathrm{d}A$ have total variation uniformly bounded by some deterministic 
constant $K$. 

Expanding via the Tanaka formula, we see
\begin{equation}
\begin{split}
&|\lt{a}{t}(X^n) - \lt{a}{t}(X)| \\
& = \Big| \, \Big( |X^n_t - a| - |X^n_0 - a| - \spint{0}{t}{\text{sgn}(X^n_s - a)}{X^n_s} \Big ) \\ & \hspace{15pt} - \Big( |X_t - a| - |X_0 - a|  - \spint{0}{t}{\text{sgn}(X_s - a)}{X_s} \Big) \Big| \\
&\leq \Big| |X^n_t - a| - |X_t - a| \Big| + \Big| |X^n_0 - a| - |X_0 - a| \Big| \\
& \hspace{15pt}+ \left| \spint{0}{t}{\text{sgn}(X^n_s - a)}{X^n_s} - \spint{0}{t}{\text{sgn}(X_s - a)}{X_s} \right|
\end{split}
\end{equation}
The first two terms converge to zero uniformly on compacts in in probability by uniform convergence in expectation of $X^n$ to $X$, and the continuous mapping theorem. Using the decompositions of $X^n$ and $X$, the difference of the stochastic integrals with respect to the local martingale parts is bounded by
\begin{equation} \label{triangleineq}
\begin{split}
&| \spint{0}{t}{\text{sgn}(X^n_s - a)}{M^n_s} - \spint{0}{t}{\text{sgn}(X^n_s - a)}{M_s} | \\
&+ | \spint{0}{t}{\text{sgn}(X^n_s - a)}{M_s} - \spint{0}{t}{\text{sgn}(X_s - a)}{M_s} |.
\end{split}
\end{equation}
Dealing with each part separately, we have
\begin{equation} \label{triangleineq2}
\begin{split}
&\mathbb{E} \left[ | \spint{0}{t}{\text{sgn}(X^n_s - a)}{M^n_s} - \spint{0}{t}{\text{sgn}(X^n_s - a)}{M_s} | \right]  \\
&= \mathbb{E} \left[ \left( \spint{0}{t}{\text{sgn}(X^n_s - a)^2}{ \!\left< M^n - M,M^n-M \right>_s} \right)^{\frac12} \right] \\ &\leq \mathbb{E} \left[ \left( \spint{0}{t}{}{ \!\left< M^n - M,M^n-M \right>_s} \right)^{\frac12}\right] \\ &= \mathbb{E}[\left< M^n - M, M^n - M \right>_t^{\frac12}] \leq \mathbb{E}[\left< M^n - M, M^n - M \right>_{\infty}^{\frac12}].
\end{split}
\end{equation}
This converges to zero by assumption, and is independent of $t$.

Expanding the other part, fixing some $\epsilon > 0$, we see
\begin{equation} \label{triangleineq3}
\begin{split}
&\mathbb{E} \left[ | \spint{0}{t}{\text{sgn}(X^n_s - a)}{M_s} - \spint{0}{t}{\text{sgn}(X_s - a)}{M_s} |^2 \right] \\ &= \mathbb{E} \left[ \spint{0}{t}{\left( \text{sgn}(X^n_s - a) - \text{sgn}(X_s - a) \right)^2 }{\!\left< M,M \right>_s} \right] \\ & \leq \mathbb{E} \bigg[ \int_{0}^{t}{ \left( \text{sgn}(X^n_s - a) - \text{sgn}(X_s - a) \right)^2 \left( \indic{X_s \notin (a-\epsilon,a+\epsilon)}\right.}  \\& \hspace{200pt} +  {\left.\indic{X_s \in (a-\epsilon,a+\epsilon)} \right)\, }{\mathrm{d}\! \left< M,M \right>_s} \bigg] \\
\end{split}
\end{equation}
\begin{equation*}
\begin{split}
&\leq \mathbb{E} \left[ \spint{0}{t}{ \left( \text{sgn}(X^n_s - a) - \text{sgn}(X_s - a) \right)^2 \indic{X_s \notin (a-\epsilon,a+\epsilon)} }{\!\left< M,M \right>_s} \right] \\ & \hspace{15pt} + \mathbb{E} \left[ \spint{0}{t}{ \indic{X_s \in (a-\epsilon,a+\epsilon)} }{\!\left< M,M \right>_s} \right].
\end{split}
\end{equation*}
For each fixed $\epsilon > 0$, as $n \to \infty$ the former term converges to zero by uniform convergence of $X^n$ to $X$ in expectation. The latter term may be re-expressed using the occupation time formula (\ref{otf}) as
\begin{equation}
\mathbb{E} \left[ \spint{\mathbb{R}}{}{\indic{x \in (a-\epsilon,a+\epsilon)} \lt{x}{t} }{x} \right].
\end{equation}
Letting now $\epsilon \to 0$, this converges to zero. Uniformity in $t$ follows after noting that the integrands are positive, and the measures $\mathrm{d} \!\left< M,M \right>$ are non-negative.

We follow the same procedure for the Lebesgue-Stieltjes integrals. Let $U_0 = \{ (s,\omega) \in \mathbb{R}_+ \times \Omega \, : \, | X_s - a | \geq 1 \}$ and $U_i = \{ (s,\omega) \in \mathbb{R}_+ \times \Omega \, : \, 2^{-(i+1)} \leq | X_s - a | < 2^{-i} \}$ for each $i \in \mathbb{N}$. Employing Tonelli's theorem,
\begin{equation} \label{triangleineq4}
\begin{split}
&\mathbb{E} \left[ | \spint{0}{t}{\text{sgn}(X^n_s - a)}{A^n_s} - \spint{0}{t}{\text{sgn}(X_s - a)}{A^n_s} | \right]  \\ & = \mathbb{E} \left[ |  \spint{0}{t}{\sum_{i=0}^\infty \Big( \text{sgn}(X^n_s - a) - \text{sgn}(X_s - a) \Big) \mathbbm{1}_{U_i} }{A^n_s} | \right] \\ & \hspace{10pt} + \mathbb{E} \left[ |  \spint{0}{t}{ \Big( \text{sgn}(X^n_s - a) - \text{sgn}(X_s - a) \Big) \indic{X_s = a} }{A^n_s} | \right]
\\ &\leq \sum_{i=0}^\infty \mathbb{E} \left[ \, \sup_{0 \leq s \leq t} \Big| \text{sgn}(X^n_s - a) - \text{sgn}(X_s - a) \, \Big| \mathbbm{1}_{U_i}  \, \text{TV} \left( \mathrm{d} A^n_s \, |_{U_i} \right) \, \right]
\\& \hspace{15pt} + \mathbb{E}  \left[ | \spint{0}{t}{ \Big( \text{sgn}(X^n_s - a) - \text{sgn}(X_s - a) \Big) \indic{X_s = a}}{A^n_s} | \right]
\end{split}
\end{equation}
Uniform convergence of $X^n$ to $X$ in expectation implies that the sum converges to zero as $n \to \infty$, after employing the dominated convergence theorem. Again this is uniform in $t$. The final term is zero by assumption (\ref{ctsassump}) for each $t$.
Finally,
\begin{equation}
\begin{split}
&\mathbb{E} \left[ | \spint{0}{t}{\text{sgn}(X_s - a)}{A^n_s} - \spint{0}{t}{\text{sgn}(X_s - a)}{A_s} | \right] \to 0
\end{split}
\end{equation}
as the set of possible discontinuity points $\left\{ X_s = a \right\}$ is $\lambda$-null, again by (\ref{ctsassump}). One can also observe this by writing the signum function as a difference of two indicator functions, and considering measures of the sets $\{ X_s \geq a \}$ and $\{ X_s < a \}$. This is independent of $t$ by assumption.

Finally, let $X^n$ and $X$ be as in the statement of the theorem. Then there exists a sequence of stopping times $T_m$ which increase to infinity almost surely, such that $\| X^n_{\cdot \, \wedge T_m} \|_{\mathcal{H}^1} < K(m)$, and $\| X_{\cdot\, \wedge T_m} \|_{\mathcal{H}^1} < K(m)$, with each of the stopped processes bounded. Fix $t_0 > 0$ and positive constants $\epsilon$ and $\delta $. Take $M$ such that $\mathbb{P} \left( T_M \leq t_0 \right) < \epsilon$. Then it follows
\begin{equation}
\begin{split}
&\mathbb{P} \left( \sup_{0 \leq t \leq t_0} |\lt{a}{t}(X^n) - \lt{a}{t}(X)| > \delta \right) \\ &\leq \mathbb{P} \left( \sup_{0 \leq t \leq t_0} |\lt{a}{t}(X^n_{\cdot \, \wedge T_m}) - \lt{a}{t}(X_{\cdot\, \wedge T_m})| > \delta \right) + \mathbb{P} \left( T_M \leq t_0 \right).
\end{split}
\end{equation}
Applying the former part of this proof to the stopped semimartingales $X^n_{\cdot \, \wedge T_m}$ and $X_{\cdot\, \wedge T_m}$, noting that convergence in probability and uniform boundedness implies convergence in expectation, we obtain the result.
\end{proof}

The next result is a probabilistic version of a general result in the theory of functions of bounded variation, namely that convergence in $L^1$ and convergence of total variations in $\mathbb{R}$ implies weak convergence. This helps to verify (\ref{ctsassump}) when $\lambda = \left| \mathrm{d} A_s \right|$. 

\begin{lemma} \label{totalvarconv}
Let $A^n$ be a sequence of processes of locally uniformly bounded variation, meaning that there exists a sequence of stopping times $T^m$ increasing to $\infty$ almost surely, such that $\spint{0}{\infty}{|\!}{ A_{s \wedge T_m}^n |} < K(m)$ for constants $K(m)$. Further assume there exists a process of locally bounded variation $A$ such that $\spint{0}{t}{\left| (A^n_s - A_s) \right|}{s} \to 0 $ in probability for each $t \in \mathbb{R}_+$. If we have $\spint{0}{t}{| \!}{A^n_s|} \to \spint{0}{t}{| \!}{A_s|}$ in probability for each $t \in \mathbb{R}_+$, then also $| \mathrm{d}A^n| \to | \mathrm{d}A |$ weakly in probability.
\end{lemma}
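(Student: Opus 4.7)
The plan is to reduce this probabilistic statement to its classical deterministic analogue via the subsequence principle, and then prove the deterministic fact (for BV functions, $L^1$-convergence plus convergence of total variations in $\mathbb{R}$ forces weak convergence of the total-variation measures) by a Banach-Alaoglu plus Jordan decomposition argument.

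First, to show $\int_0^t f(s) \, |dA_s^n| \to \int_0^t f(s) \, |dA_s|$ in probability for every continuous $f$ of compact support and every $t \geq 0$, it suffices to show that every subsequence admits a further subsequence along which convergence holds almost surely. So fix such an $f$ with $\mathrm{supp}\,f \subseteq [0,T_0]$ and an arbitrary subsequence. Fix $T > T_0$ and localize on $\{T_m \geq T\}$, which has probability at least $1-\epsilon$ for large $m$; on this event $|dA^n|([0,T]) \leq K(m)$ uniformly in $n$. Both hypothesised convergences $\int_0^T |A^n_s - A_s| \, ds \to 0$ and $|dA^n|([0,T]) \to |dA|([0,T])$ hold in probability, so a further diagonal subsequence, still indexed by $n$, can be extracted along which both convergences hold almost surely.

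On the resulting almost-sure event we are then in the deterministic setting: BV functions $A^n$ on $[0,T]$, uniformly bounded in total variation, converging in $L^1([0,T])$ to a BV function $A$, with $\mathrm{TV}(A^n) \to \mathrm{TV}(A)$. I would prove $|dA^n| \to |dA|$ weakly on $[0,T]$ as follows. By Banach-Alaoglu, extract a sub-subsequence along which $|dA^n| \to \nu$ weakly for some positive Radon measure $\nu$ on $[0,T]$; extracting further, the Jordan components $(dA^n)_{\pm}$ also converge weakly to positive measures $\sigma_{\pm}$ satisfying $\sigma_+ + \sigma_- = \nu$. The $L^1$ convergence $A^n \to A$, combined with integration by parts against smooth test functions vanishing at the endpoints and extension by density using the uniform total-variation bound, yields $dA^n \to dA$ weakly on $[0,T]$, so that $dA = \sigma_+ - \sigma_-$. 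Minimality of the Jordan decomposition of $dA$ then forces $(dA)_{\pm} \leq \sigma_{\pm}$, hence $|dA| \leq \nu$. The equality $\nu([0,T]) = \lim |dA^n|([0,T]) = |dA|([0,T])$ then upgrades this to $\nu = |dA|$. Since every sub-subsequential weak limit of $|dA^n|$ equals $|dA|$, the full deterministic sequence converges weakly.

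Applying this pathwise on the almost-sure event and on $\{T_m \geq T\}$ yields $\int f \, d|dA^n| \to \int f \, d|dA|$ almost surely along the extracted subsequence; letting $\epsilon \to 0$ concludes convergence in probability. The main obstacle I expect is the deterministic inequality $|dA| \leq \nu$, which requires comparing the Jordan decomposition of the limit signed measure $dA$ to the weak limits of $(dA^n)_{\pm}$ rather than to $(dA)_{\pm}$ directly; boundary effects at the endpoints of $[0,T]$ I would avoid by taking $T$ strictly larger than the support of $f$ and using that the continuous processes $A^n$, $A$ starting at $0$ give measures with no atom at $\{0\}$ or $\{T\}$.
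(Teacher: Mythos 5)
Your proof is correct and follows the same overall strategy as the paper: localise via the stopping times $T_m$, reduce convergence in probability to almost-sure convergence along subsequences, and then invoke the deterministic fact that $L^1$-convergence of BV functions together with convergence of their total variations forces weak convergence of the total-variation measures. The two differences are local. First, where the paper simply cites this deterministic result (Ambrosio--Fusco--Pallara, Prop.~3.15), you prove it from scratch via Banach--Alaoglu, weak limits of the Jordan components, identification of the limit of $\mathrm{d}A^n$ by integration by parts, and the minimality of the Jordan decomposition to get $|\mathrm{d}A| \leq \nu$ before matching total masses; this is the standard proof of that proposition and is sound, including your handling of the endpoint atoms by choosing $T$ beyond the support of $f$. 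Second, the paper routes the probabilistic reduction through uniform integrability and the Dunford--Pettis theorem to obtain weak $L^1(\mathbb{P})$ convergence and then convergence of expectations, whereas you use the subsequence characterisation of convergence in probability directly; your route is the more elementary of the two and avoids the somewhat opaque final step of the paper's argument (testing against $\mathbbm{1}_\Omega$). The only point worth tightening is the claim that $A$ and the $A^n$ give measures with no atom at $\{0\}$ or $\{T\}$: the lemma does not assume continuity of these processes (and in the application to Theorem~\ref{prottersanmartin} the $A^n$ are genuinely discontinuous), but since the hypothesis on convergence of total variations holds for every $t$, you can simply choose $T$ outside the (at most countable) atom set of $|\mathrm{d}A|$, so this is cosmetic rather than a gap.
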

\begin{proof}
By taking a minimum, we assume that $T^m$ also localises $\spint{0}{\infty}{| \!}{A_s |}$ with the same constant $K(m)$. Fix $\epsilon > 0$ and $t_0 > 0$ and take $M$ such that $\mathbb{P} \left( T_M \leq t_0 \right) < \epsilon$. Fixing a continuous $f$, by straightforward bounds we find
\begin{equation}
\mathbb{E} \left[ \left| \spint{0}{t_0}{f(s) |\!}{A^n_{s \wedge T_M} |} - \spint{0}{t_0}{f(s) |\!}{A_{s \wedge T_M}|} \, \right|^2 \right] \leq 4 \, K(M) \sup_{0 \leq s \leq t_0} \left| f(s) \right|^2,
\end{equation}
meaning that this sequence of random variables is uniformly integrable and lies in $L^1(\mathbb{P})$. Thus it is relatively sequentially compact in the weak topology by the Dunford-Pettis theorem. Given any convergent subsequence, by passing to a further subsequence, we obtain a subsequence converging almost surely. The deterministic Theorem \cite[Prop 3.15, p. 126]{ambrosio00}  applies to this subsequence, which therefore converges to zero in expectation. As any subsequence has a convergent subsequence with the same limit, we obtain that the original sequence converges to zero in the weak topology. By using the bounded test function $\indic{\Omega} (\omega)$ we obtain the result.
\end{proof}

\begin{remark}
Note that condition (\ref{ctsassump}) is satisfied when $\lambda$ admits a density with respect to Lebesgue measure almost surely, and $\mathbb{P} \left( X_{s} = a \right) = 0$ for each $0 < s \leq t$, by applying Fubini's theorem. In particular, this holds in the important special case when $X$ is a one-dimensional It\^o process with strictly positive diffusion coefficient.
\end{remark}

\section{Applications}
%%%%%%%%%%%%%%%%%%%%%%%%%%%%%%%%%%%%
The following theorem, giving the local time on curves formula, follows as a direct consequence of the local time-space calculus, namely Theorem \ref{seconditoform}. The formula was first established by Peskir \cite{peskir05}, dealing with a function which is sufficiently smooth except over a time-dependent curve, and was extended to higher dimensions and to include processes with jumps in \cite{peskir07}. Here we deal with the original time-space case.

\begin{theorem} \label{ltcformula}
Let $X = (X_t)_{t \geq 0} $ be a continuous semimartingale, with corresponding field of local times $\lt{a}{s}$, and let $b: \rplustor$ be a continuous function of bounded variation. Define
\begin{gather*}
 C = \{ (t,x) \in \rplusr \; | \; x < b(t) \}, \\
 D = \{ (t,x) \in \rplusr \; | \; x > b(t) \}.
\end{gather*}
Suppose we are given a continuous function $F: \rplusrtor$ such that
\begin{gather*}
F \text{ is } C^{1,2} \text{ on } \bar{C}, \\
F \text{ is } C^{1,2} \text{ on } \bar{D}.
\end{gather*}
The precise meaning of this condition is that the restriction of $F$ to $C$ can be extended to a $C^{1,2}$ function on the whole of $\rplusr$, and likewise for $D$. Then the following change of variable formula holds
\begin{equation}
\begin{split}
F(t,X_t) = F(0,X_0) &+  \spint{0}{t}{F_t(s,X_s-) }{s}
+ \spint{0}{t}{F_x(s,X_s-)}{X_s} \\ 
&+ \frac{1}{2} \spint{0}{t}{ F_{xx}(s,X_s) \,  \indic{X_s \neq b(s)}}{\! \left<X,X\right>_s} \\ 
& +  \spint{0}{t}{\frac{1}{2} \Big( F_x(s,b(s)+) - F_x(s,b(s)-) \, \Big) \indic{X_s = b(s)}}{_s \lt{b(s)}{s}}.
\end{split}
\end{equation}
Equivalently, we may express this as
\begin{equation} \label{ltcsecond}
\begin{split}
F(t,X_t) = F(0,X_0) &+  \spint{0}{t}{F_t(s,X_s -) }{s} +  \spint{0}{t}{F_x(s,X_s-)}{X_s} \\ 
&+ \frac{1}{2} \spint{\mathbb{R}}{}{\spint{0}{t}{ F_{xx}(s,a + b(s)) \,  \indic{a \neq b(s)}}{_s \lt{b(s) + a}{s}}}{a} \\ 
&+  \spint{0}{t}{\frac{1}{2} \Big( F_x(s,b(s)+) - F_x(s,b(s)-) \, \Big) \indic{X_s = b(s)}}{_s \lt{b(s)}{s}},
\end{split}
\end{equation}
where $\lt{b(s) + a}{s}$ is the local time of $X-b$ at $a$.
\end{theorem}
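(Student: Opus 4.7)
The plan is to apply Theorem~\ref{seconditoform} directly to $F$ and then evaluate the resulting local time-space integral of $F_x$. Let $F^-$ and $F^+$ denote the $C^{1,2}$ extensions of $F|_{\bar{C}}$ and $F|_{\bar{D}}$ to all of $\mathbb{R}_+ \times \mathbb{R}$, which agree on the curve $\{a = b(s)\}$ by continuity of $F$. With the left-limit convention in space, $F_x(s,a) = F^-_x(s,a)\indic{a \leq b(s)} + F^+_x(s,a)\indic{a > b(s)}$ and analogously for $F_t$. The hypotheses of Theorem~\ref{seconditoform} are met: $F$ admits $F_t$ as density in time with respect to Lebesgue measure, left-continuous in $t$ with left limits in $x$ (so $F_t(s, X_s-)$ picks up $F^-_t(s, b(s))$ on $\{X_s = b(s)\}$); and $F_x$ is local time-space integrable via Definition~\ref{densityexp3}, its Vitali variation on $[0,T] \times [-L,L]$ being finite thanks to the $C^{1,1}$ regularity of $F^{\pm}_x$ away from the curve together with a contribution along the curve bounded by $\sup_{s \leq T} |F^+_x(s, b(s)) - F^-_x(s, b(s))| \cdot \text{TV}(b|_{[0,T]})$.

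The remaining task is to evaluate $\int_0^T \int_\mathbb{R} F_x(s,a)\,dL^a_s$. I decompose $F_x = F^-_x + \Delta \,\indic{a > b(s)}$ with $\Delta := F^+_x - F^-_x$. Comparing It\^o's classical formula applied to the $C^{1,2}$ function $F^-$ with Theorem~\ref{seconditoform} applied to $F^-$ immediately yields
\begin{equation*}
\int_0^T \int_\mathbb{R} F^-_x(s,a)\,dL^a_s = -\int_0^T F^-_{xx}(s,X_s)\,d\langle X,X\rangle_s.
\end{equation*}
For the jump piece, approximate $\indic{a > b(s)}$ by $\phi_n(a - b(s))$ with $\phi_n$ a smooth approximation of $\indic{(0, \infty)}$ whose derivative $\phi_n'$ approximates $\delta_0$. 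Each $H_n(s,a) := \Delta(s,a)\phi_n(a - b(s))$ is $C^{1,1}$ and fits Definition~\ref{densityexp2}, so $\int_0^T\int_\mathbb{R} H_n\,dL^a_s = -\int_\mathbb{R}\int_0^T \partial_a H_n(s,a)\,d_s L^a_s\,da$. Splitting $\partial_a H_n = \Delta_a \phi_n + \Delta \phi_n'$: the first term converges, via Theorem~\ref{ctsinintegrand} and the occupation time formula, to $-\int_0^T (F^+_{xx} - F^-_{xx})(s,X_s)\indic{X_s > b(s)}\,d\langle X,X\rangle_s$, while the second concentrates on the curve to yield $-\int_0^T [F_x(s,b(s)+) - F_x(s,b(s)-)]\indic{X_s = b(s)}\,d_s L^{b(s)}_s$, best understood via the shifted process $Y = X - b$ so that $L^{b(s)}_s$ becomes the standard local time $L^0_s(Y)$.

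Combining the two pieces, using $F_{xx}(s,X_s)\indic{X_s \neq b(s)} = F^-_{xx}(s,X_s)\indic{X_s < b(s)} + F^+_{xx}(s,X_s)\indic{X_s > b(s)}$ together with $\int_0^T \indic{X_s = b(s)}\,d\langle X,X\rangle_s = 0$ (which follows from the occupation time formula applied to $Y = X - b$ at level $0$, using $\langle Y, Y\rangle = \langle X, X\rangle$ since $b$ is of bounded variation), produces the first stated identity. The alternative form (\ref{ltcsecond}) then follows by the occupation time formula rewriting of $\int_0^T F_{xx}(s,X_s)\indic{X_s \neq b(s)}\,d\langle X,X\rangle_s$ in terms of the local times $L^a_s(Y)$ at the shifted levels. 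The main technical obstacle will be the rigorous passage to the limit in the $\Delta \phi_n'$ term — identifying it as the on-curve local time integral requires combining the continuity of local time results from Section 5 applied to $Y$ with the weak convergence of $\phi_n'$ to $\delta_0$.
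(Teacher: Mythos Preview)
Your overall plan---apply Theorem~\ref{seconditoform} and then identify the local time-space integral of $F_x$---is the right idea, but the execution diverges from the paper's in a way that leaves a real gap.

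The paper does \emph{not} apply Theorem~\ref{seconditoform} to $F$ and $X$ directly. Instead it first shifts: set $G(s,y)=F(s,y+b(s))$ and $Y=X-b$. The point is that the discontinuity of $G_y$ now sits at the \emph{fixed} level $y=0$, so $G_y$ fits Definition~\ref{densityexp2} cleanly with the single Radon measure $\nu=\delta_0+\mathrm{Leb}$ and density $G_{yy}(s,0)=F_x(s,b(s)+)-F_x(s,b(s)-)$ at the atom. Theorem~\ref{seconditoform} applied to $(G,Y)$ then gives~(\ref{ltcsecond}) immediately, with no approximation step needed; the on-curve term drops straight out of the $\delta_0$ part of $\nu$.

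Your route---working with $F$ and $X$ and approximating $\mathbbm{1}_{\{a>b(s)\}}$ by $\phi_n(a-b(s))$---runs into the following obstruction. The functions $H_n=\Delta\cdot\phi_n(a-b(s))$ do \emph{not} have uniformly bounded Vitali variation: for smooth $b$ the mixed partial of $H_n$ contains the term $-\Delta(s,a)\,b'(s)\,\phi_n''(a-b(s))$, and since $\Delta(s,b(s))\neq 0$ in general while $\int|\phi_n''|\to\infty$, the Vitali variation on any rectangle meeting the curve blows up. So option~3 of Theorem~\ref{ctsinintegrand} fails. Options~1 and~2 are also unavailable, because the limit $\Delta\,\mathbbm{1}_{\{a>b(s)\}}$ does not fit Definition~\ref{densityexp} or~\ref{densityexp2}: the jump in $a$ occurs at $a=b(s)$, which depends on $s$, so there is no single reference measure $\nu$ on $\mathbb{R}$. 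This is precisely the difficulty that the shift $y=x-b(s)$ is designed to remove. Your parenthetical remark that the $\phi_n'$ limit is ``best understood via the shifted process $Y=X-b$'' is exactly right, but that observation should be promoted to the very first step of the proof rather than invoked at the end; once you do that, the approximation becomes unnecessary.
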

\begin{proof}
The equivalence of the representations follows immediately from the time-dependent occupation time formula (\ref{otf}). Write $G(s,x) = F(s, x + b(s))$, and note that $Y = X - b$ is a semimartingale as $b$ is of bounded variation. Then $F(s,X_s) = G(s,Y_s)$ by definition.

We can see that $G$ admits a second space derivative as a density with respect to the measure $\nu$, given by
\begin{equation}
\nu(A) = \delta_0 (A) + \text{Leb}(A),
\end{equation}
where $\delta_0$ is the Dirac mass at $0$, and
\begin{equation}
G_{xx}(s,a) = \left\{ 
\begin{array}{ll}
F_x(s,b(s)+) - F_x(s,b(s)-) & {a = 0}, \\
F_{xx}(s,a + b(s)) &  a \neq 0.
\end{array} \right.
\end{equation}
This means $G_x$ is local time-space integrable in the form of Definition \ref{densityexp2}. As $b$ is of bounded variation, it generates a Lebesgue-Stieltjes measure $\mathrm{d}b$. Denoting by $\mathrm{d}b^c$ and $\mathrm{d}b^\bot$ the Lebesgue-continuous and Lebesgue-singular part of $\mathrm{d}b$ respectively, we have by the Lebesgue-Stieltjes chain rule
\begin{equation}
\begin{split}
G(t,x) = \, & G(s, x)  + \spint{s}{t}{F_t(u, x + b(u)-) + F_x(u,x + b(u) -) \, \frac{\mathrm{d}b^c}{\mathrm{d}u}}{u} \\ \, &+ \spint{s}{t}{F_x(u, x + b(u) -)}{b^\bot (\mathrm{d}u)}.
\end{split}
\end{equation}
So we have something as described in Remark \ref{itodensity2-remark}, with left-continuous integrands in space.

Now apply Theorem \ref{seconditoform} to $G$ and $Y$. Substituting in the expressions of $G$ and its derivatives in terms of $F$, we obtain
\begin{equation}
\begin{split}
F(T,X_T) - F(0,X_0) = &\spint{0}{T}{F_t(s, X_s -)}{s} + \spint{0}{T}{F_x(s,X_s -)}{b(s)} \\ &+ \spint{0}{T}{F_x(s,X_s-)}{(X - b)_s} \\ &- \frac12 \int_\mathbb{R} \spint{0}{T}{F_{xx}(s,a + b(s)) \indic{a \neq 0}}{_s \lt{a}{s}(Y)} \, \mathrm{d}a \\ & + \spint{0}{T}{\left( F_x(s,b(s)+) - F_x(s,b(s)-) \right)}{_s \lt{0}{s}(Y)}.
\end{split}
\end{equation}
where the expression $\lt{a}{s}(Z)$ refers to the local time of a semimartingale $Z$. The $\mathrm{d}b(s)$ integrals cancel with each other. Finally, note that the measure $\mathrm{d}_s \lt{a}{s}(Y)$ gives full measure to the set $\{s \, | \, Y_s = a \}$, which is the same as $\{s \, | \, X_s = a + b(s) \}$. This gives (\ref{ltcsecond}) and completes the proof.
\end{proof}

In Ghomrasni and Peskir's work \cite{ghomrasni03}, it is observed that the a similar formula can be derived by formal manipulations in the case when $F$ is instead non-smooth over a curve $c: \mathbb{R} \to \mathbb{R}$  of space, taking values in the time parameter. Indeed, they also mention that a `similar candidate formula' can be formally derived for a general curve $\gamma : [0,1] \to [0,T] \times \mathbb{R}$ over which $F$ is non-smooth. It may be possible to derive a partial result in this direction from Definition \ref{densityexp3}. However, taking $\gamma(t)= (t,t)$, it is easily seen that the function
\begin{equation}
G(s,x) = \left \{ \begin{array}{ll} 1 & \text{if } x>s \\ 0 & \text{otherwise} \end{array} \right.
\end{equation}
is not of bounded variation in the sense we described in Definition \ref{densityexp3}, so the present method does not apply. The author hopes to establish a more general time-space local time on curves formula in a future work.

The following theorem is a result from Ghomrasni \cite{ghomrasni05}, \cite{ghomrasni10}, which we present here using the same idea of proof, but with an alternative construction of the local time-space integral. Other results in Ghomrasni \cite{ghomrasni10} follow directly using the same methods.

\begin{theorem} \label{ghomrasnithm}
Let $H: [0,T] \times \mathbb{R} \to \mathbb{R}$ be local time-space integrable. Then
\begin{equation}
\lim_{\epsilon \to 0} \frac{1}{\epsilon} \int_0^T H(s,X_s) - H(s,X_s - \epsilon) \, \mathrm{d} \left<X,X \right>_s = \int_\mathbb{R} \, \spint{0}{T}{H(s,a)}{\lt{a}{s}},
\end{equation}
where the limit is taken in probability.
\end{theorem}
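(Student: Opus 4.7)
\medskip

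\noindent\textbf{Proof proposal.} The natural first step is to apply the time-dependent occupation time formula \eqref{otf} to the integrand $G(s,x)=\tfrac{1}{\epsilon}\bigl[H(s,x)-H(s,x-\epsilon)\bigr]$, which recasts the left-hand side as
\begin{equation*}
\int_{\mathbb{R}} \int_0^T \frac{H(s,a)-H(s,a-\epsilon)}{\epsilon}\, d_s\ell^a_s\, da.
\end{equation*}
A translation $a\mapsto a+\epsilon$ in the shifted summand rearranges this into $\frac{1}{\epsilon}\int_{\mathbb{R}}\int_0^T H(s,a)\,d_s\bigl[\ell^a_s-\ell^{a+\epsilon}_s\bigr]\,da$, i.e.\ precisely a finite-difference Bouleau-Yor-type approximation in the space variable of local time. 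This representation already suggests that the limit should match the local time-space integral constructed in Section~3.

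First I will verify the identity directly on the generating simple functions $H(u,a)=\mathbbm{1}_{(s_0,t_0]}(u)\,\mathbbm{1}_{(x,y]}(a)$. For sufficiently small $\epsilon>0$, the indicator difference $\mathbbm{1}_{(x,y]}(X_s)-\mathbbm{1}_{(x,y]}(X_s-\epsilon)$ collapses to $\mathbbm{1}_{(x,x+\epsilon]}(X_s)-\mathbbm{1}_{(y,y+\epsilon]}(X_s)$, so the defining limit \eqref{localtimelimit} of the local time immediately produces $(\ell^x_{t_0}-\ell^x_{s_0})-(\ell^y_{t_0}-\ell^y_{s_0})$, which agrees (up to the sign convention) with the value prescribed by \eqref{simpledef}. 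Extending by linearity then gives the identity for every simple function.

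Second, for a general local time-space integrable $H$, I approximate $H$ by the mollified step-function approximant $\tilde H^{m,n}$ constructed in the proof of Lemma \ref{equiv2}. Theorem \ref{ctsinintegrand} supplies convergence of the right-hand side local time-space integrals as $m,n\to\infty$. To transport the simple-function identity to the limit, the left-hand sides must converge uniformly enough in $\epsilon$ to commute with the limit in $m,n$. When $H$ satisfies Definition \ref{densityexp} or \ref{densityexp2} I substitute the density representation $\tfrac{1}{\epsilon}\int_{a-\epsilon}^{a-} g(s,q)\,d\nu(q)$ for the difference quotient and invoke Fubini to obtain an expression that is locally bounded uniformly in $\epsilon$; the classical and stochastic dominated convergence theorems then close this case.

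The main obstacle is Definition \ref{densityexp3}, where $H$ is only of locally bounded Vitali variation and the difference quotient itself carries no uniform pointwise bound. Here my plan is to exchange the Lebesgue $da$-integral with the Lebesgue-Stieltjes measure $d_{(u,a)}H$, rewriting the post-occupation-time expression as a Stieltjes integral of an $\epsilon$-average of $\ell^{\cdot}_s$ against $d_{(u,a)}H$. Lemma \ref{uniformlemma} then provides uniform step-function control of the local time field, and the final approximation argument of the proof of Lemma \ref{equiv2} (compare \eqref{pwconveqn}) supplies the required estimate. Once the left-hand side is controlled in this uniform sense, sending first $m,n\to\infty$ and then $\epsilon\to 0$ through the simple-function identity completes the proof, with uniformity on compact sets in probability inherited from Theorem \ref{ctsinintegrand}.
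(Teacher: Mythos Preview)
Your proposal will work, but it takes a considerably longer route than the paper and the simple-function verification you set up is never actually used by your own argument.

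The paper's key observation is that the left-hand side is \emph{already} a local time-space integral: setting $H_\epsilon(s,a)=\tfrac{1}{\epsilon}\int_{a-\epsilon}^{a}H(s,x)\,dx$, one has $\partial_a H_\epsilon(s,a)=\tfrac{1}{\epsilon}\bigl(H(s,a)-H(s,a-\epsilon)\bigr)$, so by Definition~\ref{densityexp2} with $\nu$ Lebesgue and the occupation time formula,
\[
\Lambda(H_\epsilon)=\frac{1}{\epsilon}\int_0^T\!\bigl(H(s,X_s)-H(s,X_s-\epsilon)\bigr)\,d\langle X,X\rangle_s.
\]
The whole problem thus collapses to showing $\Lambda(H_\epsilon)\to\Lambda(H)$, which the paper does in each representation separately (for Definition~\ref{densityexp2} by a direct Fubini and right-continuity of $a\mapsto\ell^a$; for Definitions~\ref{densityexp} and~\ref{densityexp3} by an integration by parts and the same mechanism). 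No approximation by simple functions or commuting of double limits is needed.

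Your detour through $\tilde H^{m,n}$ and Theorem~\ref{ctsinintegrand} would demand uniform-in-$\epsilon$ control of the left side along the approximating sequence, which you correctly identify as the main obstacle. But notice that your proposed fix for Definition~\ref{densityexp3}---rewriting the post-occupation expression as an integral of the $\epsilon$-average $\tfrac{1}{\epsilon}\int_a^{a+\epsilon}\ell^x_s\,dx$ against $d_{(u,a)}H$---is precisely the computation of $\Lambda(H_\epsilon)$ in the Definition~\ref{densityexp3} representation, applied directly to $H$ rather than to $\tilde H^{m,n}$. Once you see that, the simple-function step and the $m,n$ limit become superfluous: you can just take $\epsilon\to0$ in that expression using boundedness of local time and the finite Vitali variation of $H$. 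In short, your endgame is the paper's whole game; the opening moves can be dropped.
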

\begin{proof}
Define $H_\epsilon (s,a) = \frac{1}{\epsilon} \spint{a-e}{a}{H(s,x)}{x}$. Note that $H_\epsilon \to H$ pointwise as $\epsilon \to 0$. Note also that $\frac{\partial}{\partial a}H_\epsilon (s,a) = \frac{1}{\epsilon} \left( H(s,a) - H(s,a-\epsilon) \right)$. Thus by the occupation time formula,
\begin{equation}
\begin{split}
\Lambda(H_\epsilon) &= - \int_\mathbb{R} \spint{0}{T}{\frac{H(s,a) - H(s,a-\epsilon)}{\epsilon}}{_s \lt{a}{s}} \, \mathrm{d} a \\&= \frac{1}{\epsilon} \int_0^T H(s,X_s) - H(s,X_s - \epsilon) \, \mathrm{d} \left<X,X \right>_s.
\end{split}
\end{equation}
Assume that $H$ satisfies the conditions of Definition \ref{densityexp2}. Then we may write
\begin{equation}
\int_\mathbb{R} \spint{0}{T}{\frac{H(s,a) - H(s,a-\epsilon)}{\epsilon}}{_s \lt{a}{s}} \, \mathrm{d} a = \int_\mathbb{R} \spint{0}{T}{ \spint{\mathbb{R}}{}{\frac{h(s,x)}{\epsilon} \indic{a-\epsilon \leq x < a}}{\nu(x)}}{_s \lt{a}{s}} \, \mathrm{d} a.
\end{equation}
Rewriting the indicator function and exchanging the order of integration, we obtain
\begin{equation}
\spint{\mathbb{R}}{}{ \int_\mathbb{R} \spint{0}{T}{ \frac{h(s,x)}{\epsilon} \indic{x < a \leq x + \epsilon}}{_s \lt{a}{s}} \, \mathrm{d} a} {\nu(x)} = \spint{\mathbb{R}}{}{\frac{1}{\epsilon} \int_{x}^{x+\epsilon} \spint{0}{T}{h(s,x)}{_s \lt{a}{s}} \, \mathrm{d} a} {\nu(x)}.
\end{equation}
Left continuity of $h$ in the time variable and right-continuity of local time in the space variable allows us to determine that the function $\phi(a) = \spint{0}{T}{h(s,x)}{_s \lt{a}{s}}$ is right continuous at $x$. Taking a limit as $\epsilon \to 0$, we obtain the result.

The corresponding result when $H$ satisfies instead Definitions \ref{densityexp} or \ref{densityexp3} follow by integration by parts in $s$ then a similar method.
\end{proof}

We now recast a result of Protter and San Martin \cite{protter93} in terms of local time-space integration. We require the local time for a general c\`adl\`ag semimartingale, which can be defined by (\ref{localtimelimit}). In fact, if this semimartingale has jumps of bounded variation, the regularity properties and representation via the Tanaka formula still hold. We refer to \cite{protter04} for more details.

\begin{theorem} \label{prottersanmartin}
Fix a semimartingale $X$. Let $H$ be a continuous process, and $\theta$ a continuous process of bounded variation. Assume that
\begin{equation} \label{newctsass}
\spint{0}{t}{\indic{X_s = \theta_s} }{A_s} = \spint{0}{t}{\indic{X_s = \theta_s} }{\theta(s)} = 0
\end{equation}
for each $t \in \mathbb{R}_+$ almost surely. Then for a refining sequence of partitions $(\pi_n)_{n \geq 1}$ of $[0,t]$, whose mesh converges to zero, we have
\begin{equation}
\lim_{n \to \infty} \sum_{t_i \in \pi_n} H_{t_i} \left( \lt{\theta_{t_{i}}}{t_{i+1}}(X) - \lt{\theta_{t_{i}}}{t_{i}}(X) \right) = \spint{0}{t}{H_s}{_s \lt{\theta}{s}(X)},
\end{equation}
where convergence is uniformly on compacts in probability.
\end{theorem}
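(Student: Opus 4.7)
Plan: Following the Protter--San Martín strategy, I would combine a Tanaka expansion of each local-time increment at the frozen level $\theta_{t_i}$ with the continuity results of Section 5. Set $Y := X - \theta$, which is a continuous semimartingale since $\theta$ is of bounded variation. Applying Tanaka to $X$ at the constant level $\theta_{t_i}$ yields
\begin{equation*}
\lt{\theta_{t_i}}{t_{i+1}}(X) - \lt{\theta_{t_i}}{t_i}(X) = |X_{t_{i+1}} - \theta_{t_i}| - |X_{t_i} - \theta_{t_i}| - \int_{t_i}^{t_{i+1}} \operatorname{sgn}(X_s - \theta_{t_i}) \, dX_s,
\end{equation*}
and after adding and subtracting $|X_{t_{i+1}} - \theta_{t_{i+1}}|$ inside the first two terms, the Riemann sum splits as $R_n = S^{(1)}_n + S^{(2)}_n - S^{(3)}_n$, where $S^{(1)}_n = \sum_i H_{t_i}(|Y_{t_{i+1}}| - |Y_{t_i}|)$, $S^{(2)}_n = \sum_i H_{t_i}(|X_{t_{i+1}} - \theta_{t_i}| - |X_{t_{i+1}} - \theta_{t_{i+1}}|)$, and $S^{(3)}_n = \sum_i H_{t_i} \int_{t_i}^{t_{i+1}} \operatorname{sgn}(X_s - \theta_{t_i}) \, dX_s$.

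I would then identify the three limits. The first is a standard Riemann sum against the continuous semimartingale $|Y|$, so $S^{(1)}_n \to \int_0^t H_s \, d|Y|_s$. For $S^{(2)}_n$, the identity $|x - \alpha| - |x - \beta| = -\int_\beta^\alpha \operatorname{sgn}(x - a) \, da$ together with continuity of $H$ and $\theta$ gives $S^{(2)}_n \to \int_0^t H_s \operatorname{sgn}(X_s - \theta_s) \, d\theta_s$. For $S^{(3)}_n$, the piecewise-frozen integrand converges pointwise to $\operatorname{sgn}(X_s - \theta_s)$ off $\{X_s = \theta_s\}$, and dominated convergence for stochastic integrals delivers $S^{(3)}_n \to \int_0^t H_s \operatorname{sgn}(X_s - \theta_s) \, dX_s$. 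Finally, Tanaka applied globally to $Y$ gives
\begin{equation*}
\int_0^t H_s \, d|Y|_s = \int_0^t H_s \operatorname{sgn}(X_s - \theta_s) \, dX_s - \int_0^t H_s \operatorname{sgn}(X_s - \theta_s) \, d\theta_s + \int_0^t H_s \, d_s \lt{0}{s}(Y),
\end{equation*}
so substituting into the limit of $S^{(1)}_n + S^{(2)}_n - S^{(3)}_n$ produces cancellations leaving $\int_0^t H_s \, d_s \lt{0}{s}(Y)$, which is precisely $\int_0^t H_s \, d_s \lt{\theta_s}{s}(X)$ under the identification used in the proof of Theorem \ref{ltcformula}.

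The main obstacle is the discontinuity of $\operatorname{sgn}$ across $\{X_s = \theta_s\}$ when passing to the limit in $S^{(2)}_n$ and $S^{(3)}_n$. Hypothesis \eqref{newctsass} provides $|dA|$- and $|d\theta|$-nullness of this set, while the occupation time formula applied to $Y$ gives $d\langle X,X \rangle$-nullness, so the frozen integrands converge almost everywhere with respect to every measure that appears and the relevant dominated convergence theorems apply. To promote the convergence from probability/$L^1$ to uniform-on-compacts-in-probability, and to organise the BV, martingale and local-time pieces in a single unified limit, I would exhibit $R_n$ as a local time-space integral $\Lambda(H^n)$ of the simple function $H^n(s,a) = \sum_i H_{t_i} \mathbbm{1}_{(t_i, t_{i+1}]}(s) \mathbbm{1}_{(-K, \theta_{t_i}]}(a)$ in the sense of Definition \ref{densityexp2} (with reference measure the finite sum of Dirac masses at the $\theta_{t_i}$) and then invoke Theorem \ref{ctsinsemimg} together with the $\mathcal{H}^1$-localisation of Section 5 to carry out the limit passage in one stroke.
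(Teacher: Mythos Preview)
Your main argument (the first three paragraphs) is correct and is essentially the original Protter--San Mart\'in proof: expand each local-time increment by Tanaka at the frozen level, split into three pieces, and pass to the limit in each using continuity of $H$, $\theta$, $X$ together with the null-set hypothesis \eqref{newctsass} and the occupation-time formula for the quadratic-variation part. The cancellation via Tanaka applied to $|Y|$ then leaves exactly $\int_0^t H_s\, d_s\lt{0}{s}(Y)$, which is the right-hand side. This is a valid and self-contained route, and in fact UCP convergence already follows directly from it (Riemann sums for stochastic integrals converge UCP, and the $S^{(2)}_n$ piece is a pathwise Stieltjes argument), so you do not need the detour in your final paragraph.

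The paper takes a genuinely different path, designed to showcase the machinery of Section~5. Rather than freezing the \emph{level} inside a Tanaka expansion, it freezes the \emph{semimartingale}: it sets $X^n_s = X_s - \sum_i \theta(t_i)\indic{t_i \leq s < t_{i+1}}$ and $H^n(s,a) = \sum_i H_{t_i}\indic{t_i < s \leq t_{i+1}}\indic{a>0}$, recognises the Riemann sum as $\int_\mathbb{R}\int_0^t H^n(s,a)\,d_s\lt{a}{s}(X^n)\,d\delta_0(a)$, and then applies Theorem~\ref{ctsinsemimg} (continuity of the local time-space integral in the underlying semimartingale), verifying condition \eqref{ctsassump} via Lemma~\ref{totalvarconv} and the hypothesis \eqref{newctsass}. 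Your final paragraph gestures toward this idea but gets the setup wrong: you try to vary the integrand's reference measure (Dirac masses at the moving points $\theta_{t_i}$) while keeping $X$ fixed, which neither Theorem~\ref{ctsinintegrand} nor Theorem~\ref{ctsinsemimg} handles cleanly, since there is no common dominating measure $\gamma$ for that sequence. The paper's trick is precisely to push the $\theta$-dependence into the semimartingale so that the reference measure stays fixed at $\delta_0$. Your elementary approach buys independence from the Section~5 machinery; the paper's approach buys a demonstration that Theorem~\ref{ctsinsemimg} subsumes Protter--San Mart\'in as a special case.
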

\begin{proof}
For each $n$, define pathwise the processes $H^n(s,a) = \sum_{t_i \in \pi_n} H_{t_i} \indic{t_i < s \leq t_{i+1}} \indic{a > 0}$ and $X^n_s = X_s - \sum_{t_i \in \pi_n} \theta(t_i) \indic{t_i \leq s < t_{i+1}}$. It is easily confirmed that $X^n$ are locally uniformly in $\mathcal{H}^1$. Then we may express the sum on the left hand side as 
\begin{equation}
\int_\mathbb{R} \spint{0}{t}{H^n(s,a)}{_s \lt{a}{s}(X^n)} \, \mathrm{d}\delta_0(a).
\end{equation}
By the triangle inequality,
\begin{equation}
\begin{split}
& \left| \spint{0}{t}{H^n(s,a)}{_s \lt{a}{s}(X^n)} - \spint{0}{t}{H_s}{_s \lt{\theta}{s}}(X-\theta) \right| \\
&\leq \left| \spint{0}{t}{\big( H^n(s,a) - H(s,a) \big)}{_s \lt{a}{s}(X^n)} \right| + \left| \spint{0}{t}{H(s,a)}{_s \big(\lt{a}{s}(X^n) - \lt{a}{s}(X - \theta) \big)} \right|.
\end{split}
\end{equation}
The former integral is bounded by
\begin{equation}
\sup_{0 \leq s \leq t} |H^n(s,a) - H(s,a) | \, \text{TV} \left( \mathrm{d}_s  \lt{a}{s}(X^n) \right) \leq \sup_{0 \leq s \leq t} |H^n(s,a) - H(s,a) |  \, \lt{a}{t}(X^n),
\end{equation}
which converges to zero by uniform convergence of $H^n$ to $H$, and boundedness of the sequence of local times.

We now verify convergence in probability of local times to apply Theorem \ref{ctsinsemimg}. Note that the bounded variation component of $X^n$ is $A - \sum_{t_i \in \pi_n} \theta(t_i) \indic{t_i \leq s < t_{i+1}}$, and the bounded variation component of $X - \theta$ is $A - \theta$. Using Lemma \ref{totalvarconv}, we see that 
\begin{equation}
\left| \mathrm{d}_s \left( A_s - \sum_{t_i \in \pi_n} \theta(t_i) \indic{t_i \leq s < t_{i+1}}\right) \right| \to \left| \mathrm{d}_s \left( A_s - \theta_s \right) \right|
\end{equation} 
weakly in probability. To apply Lemma \ref{convergenceoflt}, we must show (\ref{ctsassump}). Given a set $V \in \mathcal{B}(\mathbb{R}_+)$ and a Borel measure $\mu$ on $\mathcal{B}(\mathbb{R}_+)$, the collection $\mathcal{N}_V = \{ U \in \mathcal{B}(\mathbb{R}) \, | \, \mu(U \cap V) = 0 \}$ is a monotone class. We now note that (\ref{newctsass}) implies $(u,v) \in \mathcal{N}_{\{s \, : \, X_s = A_s \}}$ for all $u<v$, where $\mu = \mathrm{d}A_s$ or $\mathrm{d} \theta (s)$. By the monotone class theorem, $\mathcal{N}_V = \mathcal{B}(\mathbb{R})$. 

Finally as $X^n \to X$ uniformly on compacts in probability, we obtain the result.
\end{proof}

\section{Stochastic differential equations involving the local time}

The most fundamental stochastic differential equation involving local time (SDELT) is given simply by
\begin{equation} \label{skbm}
X_t = X_0 + B_t + \beta \lt{a}{t}(X).
\end{equation}
The solution to this equation is the well-known skew Brownian motion with parameter $\beta$ (denoted SkBM), first described in this form by Harrison and Shepp \cite{harrison81}. The SkBM models a particle which travels like a Brownian motion, except at the origin where it hits a permeable barrier with unequal probability of transmission or reflection. If the probability of transmission or reflection is time-dependent, then a natural extension of the SkBM would be a process solving an equation of the form
\begin{equation} \label{sdelt}
X_t = X_0 + \spint{0}{t}{b(s,X_s)}{s} + \spint{0}{t}{\sigma(s,X_s)}{B_s} + \spint{\mathbb{R}}{}{\,\spint{0}{t}{h(s,a)}{_s \lt{a}{s}(X)}}{\nu(a)}.
\end{equation}
In this section, we prove results on existence and pathwise uniqueness of equations of this form. In Section \ref{sdepwsec} we provide some motivation and recap work of other authors on equations of this form. By extending the method of Le Gall, given in the time-independent case, we establish a bijective correspondence between equations of the form (\ref{sdelt}) and standard It\^o equations of the form (\ref{fundamentalsde}) in Lemma \ref{equivlemma}. This requires an extended definition of the local time-space integral, Lemma \ref{densityexp4}, and a change of local time result, Lemma \ref{ltlemma}. Finally we obtain an existence and uniqueness result, Theorem \ref{existenceuniqueness}, in the case when $b = 0$. We then extend this to include a drift term under relatively strong conditions in Theorem \ref{driftexistenceuniqueness}. Finally we discuss the close connections with SDELTs with local time on curves, of the form (\ref{etoremartinezsde}) below.

\subsection{Previous work} \label{sdepwsec}
The study of such equations was initiated by Stroock and Yor \cite{stroock81} in the time homogeneous case, where we have 
\begin{equation} \label{sdelthom}
X_t = X_0 + \spint{0}{t}{\sigma(X_s)}{B_s} + \spint{\mathbb{R}}{}{\lt{a}{t}(X)}{\nu(a)}.
\end{equation}
Their aim was to study the `purity' of certain martingales. Le Gall provided a general treatment of the time homogeneous case in the paper \cite{legall83}, relying upon what has been called the `method of local times' for stochastic differential equations which he introduced in \cite{legall832}. It should be noted that choosing $b=0$, $\sigma = 1$ and $\nu$ a point mass at zero, we obtain (\ref{skbm}) as a special case. See \cite{Lejay06} for a thorough survey of constructions of SkBM.

The results of Le Gall are based upon a bijective transformation which removes the local time component by means of the It\^o-Tanaka formula. Using similar machinery, Rutowski \cite{rutkowski87} and Bass and Chen \cite{bass05} weakened the conditions on the coefficients of (\ref{sdelthom}). We can establish existence and uniqueness results for the transformed equation using the method of local times. The bijection transforms these solutions back to the original equation and so preserves existence and uniqueness.

The general conditions for existence and pathwise uniqueness of the solutions to stochastic differential equations of the form
\begin{equation} \label{fundamentalsde}
\mathrm{d}X_t = b(t,X_t) \mathrm{d}t + \sigma(t,X_t) \mathrm{d}B_t,
\end{equation}
are the Lipschitz and linear growth conditions on both the drift and diffusion coefficients. However in dimension one, in the time-homogeneous case, there are two well-known generalisations attributed to Yamada-Watanabe and Nakao. The Yamada-Watanabe conditions are effectively H\"older conditions, and are known to be sharp. The Nakao condition is more suited to our requirements, as it allows a discontinuous diffusion coefficient, and is effectively a bound on its quadratic variation. We will use a modification of Le Gall's \cite{legall832} proof of this result.

The first development in the time inhomogeneous case is due to Weinryb \cite{weinryb83}, generalised by more recent work by Ouknine and Bouhadou \cite{ouknine13} and \'Etor\'e and Martinez \cite{etore12}, who examined the equation
\begin{equation}
X_t = X_0 + \spint{0}{t}{b(X_s)}{s} + \spint{0}{t}{\sigma(X_s)}{B_s} + \spint{0}{t}{\beta(s)}{_s \lt{0}{s}(X)}.
\end{equation}
More recently,  \'Etor\'e and Martinez \cite{etore18} have extended this to the case when the final term is the local time on a curve, namely by the equation 
\begin{equation} \label{etoremartinezsde}
X_t = X_0 + \spint{0}{t}{b(s,X_s)}{s} + \spint{0}{t}{\sigma(s,X_s)}{B_s} + \spint{0}{t}{\beta(s)}{_s \lt{r}{s}(X)},
\end{equation}
for some curve $r$ of class $C^1$. This uses a slight generalisation of the `local time on curves' formula of Theorem originally due to Peskir \cite{peskir05}, instead of the It\^o-Tanaka formula.

The treatment which we present here uses the change of variables formula Theorem \ref{seconditoform} in place of the It\^o-Tanaka formula, which is crucial in the time homogeneous case. We establish weak existence and pathwise uniqueness results for equations of the form (\ref{sdelt}), which provides existence of a unique strong solution.

\subsection{Existence and uniqueness results} \label{sdeeusection}
Throughout, we let $\sigma: \mathbb{R}_+ \times \mathbb{R} \to \mathbb{R}$ and $b: \mathbb{R}_+ \times \mathbb{R} \to \mathbb{R}$ be bounded and measurable. Further, we assume that $\sigma \geq \epsilon > 0$ for some $\epsilon > 0$. This is sufficient \cite[Co. IX 1.14]{revuz04} to ensure existence and uniqueness in law for the equation (\ref{fundamentalsde}).

Allowing $\sigma$ to approach zero is outside the scope of our method, but it should be noted that if one also appropriately constrains the `soujourn time' at points where $\sigma$ vanishes, then existence, uniqueness and the Markov property can be obtained for SDEs of the form (\ref{fundamentalsde}), see \cite{engelbert84}.

We further assume that $\nu$ is a finite a regular finite Borel measure, $h: \mathbb{R}_+ \times \mathbb{R} \to \mathbb{R}$ is bounded and measurable, and that $\left| h(s,a) \nu ( \left\{ a \right\} )\right| < 1/2$ for all $(s,a) \in \mathbb{R}_+ \times \mathbb{R}$. Essentially this condition cannot be relaxed, as if $\left|h(s,a) \nu ( \left\{ a \right\} )\right| > 1/2$ then even a weak solution cannot exist in general, and if $\left| h(s,a)\nu ( \left\{ a \right\} )\right| = 1/2$ then the equation may admit a weak solution but no strong solution. This was first discussed by Harrison and Shepp \cite{harrison81}, and the paper of Engelbert and Blei \cite{engelbert14} provides a thorough account of these cases in the time homogeneous setting.

We assume that for some strictly increasing function $\rho$, the diffusion coefficient $\sigma$ obeys
\begin{equation} \label{sigmanakao}
(\sigma(t,y) - \sigma(t,x))^2 \leq \left| \rho(y) - \rho(x) \right|,
\end{equation}
for all $(t,x,y) \in \mathbb{R}_+ \times \mathbb{R}^2$. 

Given $h$ and $\nu$ obeying the previous assumptions, we now define the function $F$ which will transform away the local time term of (\ref{sdelt}). This is a natural modification of the method of Le Gall \cite{legall83} to the time dependent case, using exactly the same construction with time-dependent coefficients. His proof uses the It\^o-Tanaka formula, whereas ours uses the corresponding Theorem \ref{seconditoform}. Let
\begin{equation} \label{spacetransform}
F(t,x) = \spint{0}{x}{\exp \left( \zeta(t,a) \right) \psi(t,a)}{a},
\end{equation}
where the functions $\zeta$ and $\psi$ are defined by
\begin{equation}
\zeta(t,x) = \spint{-\infty}{x}{h(t,z)}{\nu^c(z)},
\end{equation}
where $\nu^c$ is the continuous part of $\nu$, and
%\begin{equation}
%\psi(t,x) = \prod_{z < x} \frac{1 - h(t,z)\nu(\{z\})}{1 + h(t,z)\nu(\{z\})}\,,
%\end{equation}
\begin{equation}
\psi(t,x) = \prod_{z < x} \Big( 1 - h(t,z)\nu(\{z\} )\Big),
\end{equation}
where an empty product is taken to be $1$. We let $F_x$ denote the left derivative of $F$, and define the function $G$ by 
\begin{equation}
G(t,y) = \left[ F(t,\cdot) \right]^{-1} (y).
\end{equation}
To compress notation, we define $\tilde{G}(t,y) = (t,G(t,y))$ and $\tilde{F}(t,x) = (t,F(t,x))$. Then $F \circ \tilde{G}$ and $G \circ \tilde{F}$ are identity maps. We stipulate now that $h$ is differentiable in its former argument when the latter is fixed, with $h_t$ being continuous in time and admitting left limits in space. This ensures that $F_t$ exists and is continuous in time with left limits in space. We can also establish that $G$ can be represented as
\begin{equation} \label{gexpression}
G(t,y) = \spint{0}{y}{\frac{1}{\exp \left( \zeta \circ \tilde{G}(t,a) \right) \psi \circ \tilde{G}(t,a)}}{a}.
\end{equation}
\begin{lemma} \label{equivlemma}
The process $X$ solves the stochastic differential equation with local time 
\begin{equation}
X_t = X_0 + \spint{0}{t}{\sigma(s,X_s)}{B_s} + \spint{\mathbb{R}}{}{\,\spint{0}{t}{h(s,a)}{_s \lt{a}{s}(X)}}{\nu(a)},
\end{equation} 
if and only if the process $Y_t = F(t,X_t)$ solves the stochastic differential equation 
%\begin{equation} \label{sdetransformed}
%Y_t = Y_0 + \spint{0}{t}{\left(F_{t-} + F_x \, b \right) \circ \tilde{G}(s,Y_s)}{s} + \spint{0}{t}{(F_{x} \,\sigma) \circ \tilde{G}(s,Y_s)}{B_s},
%\end{equation}
\begin{equation} \label{sdetransformed}
Y_t = Y_0 + \spint{0}{t}{F_{t-} \circ \tilde{G}(s,Y_s)}{s} + \spint{0}{t}{(F_{x} \,\sigma) \circ \tilde{G}(s,Y_s)}{B_s},
\end{equation} 
where $F_{t-}$ represents the left limit of $F_t$ in the space variable.
\end{lemma}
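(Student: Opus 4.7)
The plan is to apply the local time-space change of variables formula, Theorem~\ref{seconditoform}, to $F(t,X_t)$ and $G(t,Y_t)$ in the two directions, and to use the defining construction of $F$ via $\zeta$ and $\psi$ to make the local time contributions cancel. The construction is engineered, as a time-dependent generalisation of Le Gall's transformation, so that the density of $F_x(t,\cdot)$ with respect to $\nu$ satisfies the distributional identity $\partial_a F_x = -2 h F_x \, d\nu$, combining atomic and continuous parts of $\nu$, which is precisely what makes the cancellation go through.

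For the forward direction, suppose $X$ solves the SDELT. I first verify the hypotheses of Theorem~\ref{seconditoform}: $F$ is continuous and strictly increasing in $x$ with left derivative $F_x(t,a) = \exp(\zeta(t,a))\psi(t,a) > 0$; it is left-continuous in $t$ with time density $F_t$ obtained by differentiating $\zeta$ and $\psi$ in $t$, with the required left limits in space inherited from the assumption that $h_t$ is continuous in $t$ and admits left limits in space. I then verify that $F_x$ meets Definition~\ref{densityexp2} with reference measure $\nu$: from $F_x = \exp(\zeta)\psi$, the continuous part $\nu^c$ contributes through $\partial_a \exp(\zeta) = h\exp(\zeta)\,d\nu^c$, and each atom $z$ contributes through the jump of the left-continuous product $\psi$ across $z$; combining these yields the density $g$ in the form needed for the cancellation.

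Applying Theorem~\ref{seconditoform} to $F(t,X_t)$ and substituting the SDELT into $\int_0^T F_x(s,X_s)\,dX_s$ decomposes that integral into the Brownian piece $\int_0^T F_x(s,X_s)\sigma(s,X_s)\,dB_s$ and a local time piece which, after using that $d_s\ell^a_s$ is carried by $\{X_s = a\}$, equals $\int_\mathbb{R}\int_0^T F_x(s,a)h(s,a)\,d_s\ell^a_s\,d\nu(a)$. The closing term $-\tfrac12\int_0^T\int_\mathbb{R} F_x(s,a)\,d\ell^a_s$ of the formula, rewritten via Definition~\ref{densityexp2} with the density $g$ computed above, is the exact negative of this local time piece, so the two cancel. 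What remains is $Y_t = Y_0 + \int_0^t F_t(s,X_s-)\,ds + \int_0^t F_x(s,X_s)\sigma(s,X_s)\,dB_s$, and the substitution $X_s = G(s,Y_s)$ delivers precisely~(\ref{sdetransformed}). For the reverse direction I run the symmetric argument on $X_t = G(t,Y_t)$: the inverse $G$ is continuous and strictly increasing in $y$ with $G_y(t,y) = 1/(F_x \circ \tilde{G})(t,y)$ off the countable set of $y$-values mapping to atoms of $\nu$, and inherits time regularity from the implicit identity $F(t,G(t,y)) = y$; applying Theorem~\ref{seconditoform} (through Remark~\ref{itodensity2-remark} to accommodate the mixed time density of $G$) together with Lemma~\ref{ltlemma} to relate $\ell^a(Y)$ back to $\ell^a(X)$ reverses the cancellation and recovers the SDELT.

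The main obstacle is the careful bookkeeping of the density of $F_x$ with respect to the mixed measure $\nu$ and checking that the cancellation respects all sign and left/right continuity conventions --- in particular, ensuring that the left limit $F_{t-}$ appearing in~(\ref{sdetransformed}) emerges naturally from the $F_t(s,X_s-)$ term of Theorem~\ref{seconditoform}. A secondary difficulty in the reverse direction is that $G$ is not given by a closed expression, so its time and space regularity must be extracted from $F$ via the implicit relation, and the interchange between the local times of $X$ and $Y$ depends essentially on Lemma~\ref{ltlemma}.
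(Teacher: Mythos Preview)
Your overall strategy matches the paper's: apply Theorem~\ref{seconditoform} to $F$ for the forward direction and to $G$ for the reverse, with Lemma~\ref{ltlemma} translating between the local times of $X$ and $Y$. The forward direction is correct and is exactly what the paper does.

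The reverse direction has a gap you have not identified. To apply Theorem~\ref{seconditoform} to $G$ you need $G_y$ to be local time-space integrable in one of the senses of Definitions~\ref{densityexp}--\ref{densityexp3}. But the jumps of $G_y(t,\cdot)$ occur at the points $F(t,z)$ for atoms $z$ of $\nu$, and these locations move with $t$; more generally the space density of $G_y$ is naturally written with respect to the pushforward $F(t,\cdot)_\#\nu$, which is a $t$-dependent reference measure, so Definition~\ref{densityexp2} does not apply as stated. Remark~\ref{itodensity2-remark}, which you invoke, concerns only the structure of the time density and does not resolve this. The paper handles it by introducing an explicit extension, Lemma~\ref{densityexp4}, allowing the increment of $H$ in space to be expressed as an integral over $[\Phi(t,x),\Phi(t,y))$ against a fixed measure $\nu$, with the resulting local time-space integral taking the form
\[
-\int_{\mathbb{R}}\int_0^T g(u,a)\,\mathrm{d}_u \ell^{\Psi(\cdot,a)}_u\,\mathrm{d}\nu(a),
\]
where $\Psi = \Phi^{-1}$. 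Applied with $\Phi = G$, this produces integrals against $\mathrm{d}_s\ell^{F(\cdot,a)}_s(Y)$, the local time of $Y$ on the curve $s\mapsto F(s,a)$, and it is precisely this object that Lemma~\ref{ltlemma} converts into $\mathrm{d}_s\ell^a_s(X)$. Without this extension, the local time on curves never appears in your expansion of $G(t,Y_t)$, so Lemma~\ref{ltlemma} has nothing to act on; your phrase ``relate $\ell^a(Y)$ back to $\ell^a(X)$'' is a symptom of this, since it is $\ell^{F(\cdot,a)}(Y)$, not $\ell^a(Y)$, that must be related to $\ell^a(X)$.
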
 
\begin{proof}
Proving the forward direction consists of applying Theorem \ref{seconditoform}, using representation (\ref{iterated2}). The local time component of (\ref{sdelt}) cancels with the local time-space integral of $F$ by construction.

We show the reverse direction. Note that $X_t = G(t,Y_t)$ by definition. Using the Lebesgue-Stieltjes chain rule then pushing forward,
\begin{equation}
\begin{split}
G_y(t,y) &= G_y(t,x) - \spint{x}{y-}{\frac{1}{(F_x \circ \tilde{G}(t,a))^2} }{_a ( F_x \circ \tilde{G}(t,\cdot))	 } \\ & \hspace{20pt} + \sum_{x < a < y}\frac{1}{(F_{x+} \circ \tilde{G}(t,a))} - \frac{1}{(F_x \circ \tilde{G}(t,a))} \\ & \hspace{60pt}- \frac{1}{(F_x \circ \tilde{G}(t,a))^2} \left( F_{x+} \circ \tilde{G}(t,a) - F_x \circ \tilde{G}(t,a) \right)
 \\[0.8em]
&= \spint{G(t,x)}{G(t,y)-}{\frac{h(t,a)}{F_{x+} (t,a)} }{ \nu(a)},
\end{split}
\end{equation}
where $F_{x+}$ refers to the right-hand space derivative of $F$. Applying Theorem \ref{seconditoform} to the function $G$ and semimartingale $Y$, using the representation (\ref{iterated4}) below, we see
%\begin{equation}
%\begin{split}
%G(t,Y_t) = & \; G(0,Y_0) + \spint{0}{t}{G_{t-} + G_y \cdot (F_{t -} \circ \tilde{G} + (F_x b) \circ \tilde{G})  (s,Y_s)}{s} \\&+ \spint{0}{t}{G_y \cdot (F_{x } \,\sigma \circ \tilde{G}) (s,Y_s)}{B_s} + \spint{\mathbb{R}}{}{\spint{0}{t}{\frac{h(s,a)}{F_x(s,a)}}{_s \lt{F(s,a)}{s}(Y)}}{\nu(a)}
%\end{split}
%\end{equation}
\begin{equation}
\begin{split}
G(t,Y_t) = & \; G(0,Y_0) + \spint{0}{t}{G_{t-} + G_y \cdot \big( F_{t -} \circ \tilde{G} \big) (s,Y_s)}{s} \\&+ \spint{0}{t}{G_y \cdot (F_{x } \,\sigma \circ \tilde{G}) (s,Y_s)}{B_s} + \spint{\mathbb{R}}{}{\spint{0}{t}{\frac{h(s,a)}{F_{x+}(s,a)}}{_s \lt{F(s,a)}{s}(Y)}}{\nu(a)}.
\end{split}
\end{equation}
It then remains to show that
\begin{gather}
G_{t-} +  G_y \cdot \big( F_{t -} \circ \tilde{G} \big)  = 0 ,\\
G_y \cdot (F_{x } \,\sigma \circ \tilde{G}) = \sigma \circ \tilde{G} ,\\
\spint{\mathbb{R}}{}{\spint{0}{t}{\frac{h(s,a)}{F_{x+}(s,a)}}{_s \lt{F(s,a)}{s}(Y)}}{\nu(a)} = \spint{\mathbb{R}}{}{\spint{0}{t}{h(s,a)}{_s \lt{a}{s}(X)}}{\nu(a)}.
\end{gather}
The first two equations follow by passing derivatives under the integral in (\ref{gexpression}), and the one-sided chain rule. To prove the final equation, we use Lemma \ref{ltlemma}.
\end{proof}

It is apparent from the previous proof that we require an extension to the notion of local time-space integral to allow limits which may vary in time. The following extension suffices for our purposes, and demonstrates how further specific cases could be obtained.
\begin{lemma} \label{densityexp4}
Let $H \! : [0,T] \times \rtor$ be left continuous in each argument when the other is fixed. Further, assume that $H$ can be written as
\begin{equation} \label{density4}
H(t,y) - H(t,x) = \spint{\Phi(t,x)}{\Phi(t,y)-}{g(t,a)}{\nu(a)},
\end{equation}
for each $t \in [0,T]$ and $x,y \in \mathbb{R}$. Also let $g$ be left continuous in $t$ for each fixed $a$. Assume that $\Phi$ is invertible as a function of its latter argument for each fixed $t$, and write $\Psi(t,z) = \left[ \Phi(t,\cdot) \right]^{-1} (z)$. Further assume that $\Psi$ is a continuous function of bounded variation in $t$ for each fixed $z$. Then the local time space integral $ \int_{0}^{T} \spint{\mathbb{R}}{}{H(s,x)}{\lt{x}{s}}$ of $H$ is defined by
\begin{equation} \label{iterated4}  - \int_\mathbb{R} \spint{0}{T}{g(u,a)}{_u \lt{\Psi(\cdot,a)}{u}} \, \mathrm{d} \nu(a). \end{equation}
This expression extends the previous representations and the corresponding version of Theorem \ref{seconditoform} holds.
\end{lemma}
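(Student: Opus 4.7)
My approach is to adapt the proof of Lemma \ref{equiv2} for Definition \ref{densityexp2} to accommodate the time-varying endpoints $\Phi(t, \cdot)$. The consistency claim is immediate: when $\Phi(t, x) \equiv x$ one has $\Psi(t, a) \equiv a$, so the curve $\Psi(\cdot, a)$ collapses to the constant level $a$, the local time $\lt{\Psi(\cdot, a)}{u}$ coincides with $\lt{a}{u}$, and (\ref{iterated4}) reduces to (\ref{iterated2}).

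For the change of variables formula analogous to Theorem \ref{seconditoform}, I would mollify $F$ to a smooth $F^{m, n}$ as in (\ref{mollifiedF}), so the spatial derivative $F^{m, n}_x$ inherits a smoothed version of the density (\ref{density4}). Starting from a Riemann approximation based on (\ref{midrep1}) applied to $\tilde F^{m, n}_x$, I would expand each spatial increment $F^{m, n}_{x}(t_i, x_j) - F^{m, n}_{x}(t_i, x_{j - 1})$ via (\ref{density4}) as a $\nu$-integral of $g^{m, n}(t_i, \cdot)$ over the shifted interval $[\Phi(t_i, x_{j - 1}), \Phi(t_i, x_j)]$, then take the mesh to zero. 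The classical It\^o formula applied to $F^{m, n}$ together with the occupation time formula (\ref{otf}) yields a semimartingale decomposition whose correction term should match the limit of these Riemann sums; identifying the two gives (\ref{iterated4}) and hence the change of variables formula.

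The main obstacle is rigorously identifying the limit as a local time on the time-varying curves $\Psi(\cdot, a)$. Here Lemma \ref{ltlemma} is the key tool: applied with its $F$ taken as $\Phi$ and the transformed semimartingale $Y = \Phi(\cdot, X)$, the identity $\lt{\Phi(t, a)}{t}(Y) = \spint{0}{t}{\Phi_x(s, a)}{_s \lt{a}{s}(X)}$ lets us express the Stieltjes measure $\mathrm{d}_u \lt{\Psi(\cdot, a)}{u}(X)$ in terms of $\mathrm{d}_u \lt{a}{u}(Y)$. This reduction recasts the candidate expression (\ref{iterated4}) as an ordinary Definition \ref{densityexp2} integral for the transformed function $\bar{F}(t, y) = F(t, \Psi(t, y))$ applied to the semimartingale $Y$, so the already-proven Theorem \ref{seconditoform} applies in the transformed coordinates. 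Translating back via $F(t, X_t) = \bar{F}(t, Y_t)$ yields the result, with stochastic dominated convergence handling the passage from $F^{m, n}$ to $F$.
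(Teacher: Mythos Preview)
Your reduction via Lemma \ref{ltlemma} is a different route from the paper's, and it does not quite go through under the hypotheses stated in Lemma \ref{densityexp4}. To invoke Lemma \ref{ltlemma} with its $F$ equal to $\Phi$ (so its $G$ equal to $\Psi$), you need $\Psi$ to satisfy the full conditions of Theorem \ref{seconditoform} with $\Psi_y$ obeying Definition \ref{densityexp2}; the statement of Lemma \ref{densityexp4} only assumes $\Psi$ is continuous and of bounded variation in $t$ for each fixed $z$, and says nothing about the spatial regularity of $\Psi_y$ or even the existence of $\Phi_x$. Your quoted identity $\lt{\Phi(t,a)}{t}(Y) = \int_0^t \Phi_x(s,a)\,\mathrm{d}_s\lt{a}{s}(X)$ is also not what Lemma \ref{ltlemma} delivers; the correct output with $G=\Phi$ and your $Y=\Phi(\cdot,X)$ is $\lt{a}{t}(Y)=\int_0^t \Phi_x(s,\Psi(s,a)+)\,\mathrm{d}_s\lt{\Psi(\cdot,a)}{s}(X)$, which is what you actually need but requires $\Phi_x$ to exist and be suitably regular. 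So your argument would establish the lemma only under additional smoothness assumptions on $\Phi$ that are absent from the statement (though present in the eventual application in Lemma \ref{equivlemma}).

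The paper instead stays within the mollification scheme of Lemma \ref{equiv2} and avoids any appeal to Lemma \ref{ltlemma}. After pushing forward (\ref{density4}) to write the spatial increment as an integral against $(\Psi(t,\cdot)_\#\nu)$ and running the mollification, one arrives at an expression of the form $\int\!\int\!\int_0^T n\,g(s,q)\,\rho(n(p-\Psi(s,q)))\,\mathrm{d}_s\lt{p}{s}(X)\,\mathrm{d}\nu(q)\,\mathrm{d}p$. The key step is then purely elementary: expand $\lt{p}{s}(X)$ via the limit (\ref{localtimelimit}) through the occupation time formula, translate $p\mapsto p+\Psi(s,q)$ inside the Lebesgue integral, and use that $\langle X,X\rangle_s=\langle X-\Psi(\cdot,q),X-\Psi(\cdot,q)\rangle_s$ (since $\Psi(\cdot,q)$ is of bounded variation) to recognise the result as the local time of $X-\Psi(\cdot,q)$. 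This uses only the bounded variation hypothesis on $t\mapsto\Psi(t,z)$ and no further regularity of $\Phi$ or $\Psi$, which is why the paper's argument matches the stated hypotheses while yours would need to strengthen them.
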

\begin{proof}
We follow the proof of Lemma \ref{equiv2}. By pushing forward, we may write (\ref{density4}) as
\begin{equation}
H(t,y) - H(t,x) = \spint{x}{y-}{g(t,\Phi(t,a))}{ (\Psi(t,\cdot)_{\#}\nu)(a)}.
\end{equation}
Now we may replace the right hand side of equation (\ref{firstpartfirstproof}) by
\begin{equation}
\begin{split}
\spint{x}{y}{\spint{\mathbb{R}}{}{\spint{\mathbb{R}}{}{mn \, g(r,\Phi(t,q)) \, \rho(m(t-r)) \, \rho(n(p-q)) }{ (\Psi(t,\cdot)_{\#}\nu)(q) }}{r}}{p}.
\end{split}
\end{equation}
By pushing forward by $\Phi$, we see this becomes
\begin{equation}
\begin{split}
\spint{x}{y}{\spint{\mathbb{R}}{}{\spint{\mathbb{R}}{}{mn \, g(r,q) \, \rho(m(t-r)) \, \rho(n(p-\Psi(t,q))) }{ \nu(q) }}{r}}{p}.
\end{split}
\end{equation}
The proof continues until we take the limit as $n \to \infty$, where we have
\begin{equation} \label{convolved4}
\begin{split}
\Lambda(H^{n}) = - \spint{\mathbb{R}}{}{\spint{\mathbb{R}}{}{\spint{0}{T}{n \, g(s,q) \, \rho(n(p-\Psi(s,q))) }{_s \lt{p}{s}(X) }}{\nu(q)}}{p}.
\end{split}
\end{equation}
Now it remains to show that this is equal to
\begin{equation}
\spint{\mathbb{R}}{}{\spint{\mathbb{R}}{}{\spint{0}{T}{n \, g(s,q) \, \rho(np) }{_s \lt{p}{s}(X-\Psi(s,q)) }}{\nu(q)}}{p}.
\end{equation}
By expanding the local time, we see that (\ref{convolved4}) is equal to
\begin{equation}
\lim_{\epsilon \to 0} \frac{1}{\epsilon}\spint{\mathbb{R}}{}{\spint{\mathbb{R}}{}{\spint{0}{T}{n \, g(s,q) \, \rho(n(p-\Psi(s,q))) \indic{p \leq X_u \leq p + \epsilon} }{\!\left< X,X \right>_s }}{\nu(q)}}{p}.
\end{equation}
Now exchanging the order of integration and translating, then taking the limit back inside, after noting $\left< X,X \right>_s = \left< X - \Psi(\cdot,q),X - \Psi(\cdot,q) \right>_s$ for each fixed $q$, we obtain the right hand side. Finally we allow $n \to \infty$ which finishes the proof.
\end{proof}
The above representation is also useful in establishing the following `change of local time' result. This is a natural extension of the time homogeneous case (see \cite[Ex. VI 1.23]{revuz04})
\begin{lemma} \label{ltlemma}
Let $G: \mathbb{R}_+ \times \mathbb{R} \to \mathbb{R}$ be continuous, and strictly increasing in its latter argument when the former is fixed. Define
\begin{equation}
F(t,x) = \left[ G(t,\cdot) \right]^{-1} (x).
\end{equation} 
Assume $G$ obeys the conditions of Theorem \ref{seconditoform}, with the partial derivative $G_y$ satisfying Definition \ref{densityexp2}. Assume that $t \mapsto F(t,a)$ is of bounded variation for each fixed $a$. For a semimartingale $Y$, define $X_t = G(t,Y_t)$. Then we have
\begin{equation} \label{localtimeeqn}
\spint{0}{t}{G_y(s,F(s,a)+)}{_s\lt{F(\cdot,a)}{s}(Y)} = \lt{a}{s}(X),
\end{equation}
for each $a \in \mathbb{R}$.
\end{lemma}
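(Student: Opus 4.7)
The plan is to evaluate $|X_t - a| - |X_0 - a|$ in two different ways and equate the results, in the spirit of the derivation of the local-time-on-curves formula (Theorem \ref{ltcformula}).

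Since $X = G(\cdot, Y_\cdot)$ is a continuous semimartingale, Tanaka's formula gives
\begin{equation*}
|X_t - a| - |X_0 - a| = \int_0^t \text{sgn}(X_s - a) \, \mathrm{d}X_s + \lt{a}{t}(X).
\end{equation*}
Alternatively, $|X_t - a| = \Phi(t, Y_t)$ where $\Phi(t, y) = |G(t, y) - a|$, and I would apply Theorem \ref{seconditoform} to $\Phi$ and the semimartingale $Y$. Because $G(t, \cdot)$ is strictly increasing with inverse $F(t, \cdot)$, we have $G(t,y) \geq a$ iff $y \geq F(t,a)$, so $\Phi_y(t, y) = \text{sgn}(y - F(t, a)) \, G_y(t, y)$ away from the time-varying curve $y = F(t, a)$, where $\Phi_y$ has a sign-change jump. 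This structure falls precisely within Lemma \ref{densityexp4}: $\Phi_y$ admits a density representation in the space variable with respect to the measure $\nu$ (inherited from $G_y$, weighted by $\text{sgn}(\cdot - F(t,a))$) augmented by a Dirac point mass at the moving point $F(t,a)$ encoding the jump. Using representation (\ref{iterated4}), the local time-space term in Theorem \ref{seconditoform} decomposes into a ``smooth'' contribution coming from the $\nu$-component and a ``curve'' contribution equal to $\int_0^t G_y(s, F(s,a)+) \, \mathrm{d}_s \lt{F(\cdot,a)}{s}(Y)$.

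Next, I would expand $\mathrm{d}X_s = \mathrm{d}G(s, Y_s)$ using Theorem \ref{seconditoform} applied to $G$ and $Y$, multiply through by $\text{sgn}(X_s - a) = \text{sgn}(Y_s - F(s,a))$, and integrate. The resulting time-integral and stochastic-integral terms coincide with the corresponding terms in the expansion of $\Phi(t, Y_t)$ from the previous paragraph, and the ``smooth'' portion of the local time-space integral matches as well, by the same sign-times-density identities. Subtracting the two expressions for $|X_t - a| - |X_0 - a|$ then collapses the identity to
\begin{equation*}
\lt{a}{t}(X) = \int_0^t G_y(s, F(s,a)+) \, \mathrm{d}_s \lt{F(\cdot,a)}{s}(Y),
\end{equation*}
as required.

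The main obstacle is the decomposition step: verifying rigorously that $\Phi_y$ is local time-space integrable in the extended sense of Lemma \ref{densityexp4}, and isolating the Dirac contribution at the moving curve $F(\cdot, a)$ from the smooth contributions. Careful attention must be paid to the one-sided limits of $G_y$ at $F(s, a)$, since the right-continuity convention on the local time (and hence on the admissible integrands) is what selects the right-limit $G_y(s, F(s,a)+)$ in the final identity; the density interpretation must also be consistent with the conventions used in the inner Bouleau-Yor integrals.
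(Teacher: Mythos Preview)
Your approach is essentially the paper's: both compute $|X_t-a|$ via Tanaka on $X$ and via Theorem~\ref{seconditoform} on the composite $|G(\cdot,Y_\cdot)-a|$, then compare to isolate the local-time identity. The only difference is that the paper first shifts coordinates, applying $H(t,z)=|G(t,z+F(t,a))-a|$ to the semimartingale $Y-F(\cdot,a)$ so that the Dirac contribution sits at the fixed point $z=0$ and Definition~\ref{densityexp2} applies directly, whereas you keep $Y$ and invoke Lemma~\ref{densityexp4} to handle the moving curve $F(\cdot,a)$ --- both routes are valid and yield the same computation.
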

\begin{proof}
The proof follows by considering, for each fixed $a$, the maps
\begin{equation}
x \mapsto \left| x - a  \right|,
\end{equation}
\begin{equation} \label{absgbar}
H(t,z) = \left| G(t,z + F(t,a) ) - a \right| = \left[ G(t,z + F(t,a) ) - a \right]  \text{sgn}\left(z \right),
\end{equation}
applied to the semimartingales $X$ and $(t,Y - F(\cdot,a))$ respectively. We may then expand using the Tanaka formula and Theorem \ref{seconditoform}, where the local time-space integral of $G$ takes the form of Definition \ref{densityexp2}.

We expand the function $\bar{G}(t,z) = G(t,z + F(t,a))$. By mollifying, taking small increments, then using the Lebesgue-Stieltjes chain rule and taking limits, one can establish that
\begin{equation}
\bar{G}(t,z) - \bar{G}(s,z) = \spint{s}{t}{G_t(u,z + F(u,a))}{u} + \spint{s}{t}{G_y(u,z + F(u,a))}{_u F(u,a)}.
\end{equation}
We can establish likewise, with some technical manipulations, that
\begin{equation}
\begin{split}
H(t,z) - H(s,z) = &\spint{s}{t}{\text{sgn}(z) G_t(u,z + F(u,a))}{u} \\&+ \spint{s}{t}{\text{sgn}(z)G_y(u,z + F(u,a))}{_u F(u,a)}.
\end{split}
\end{equation}
Further, note that
\begin{equation}
\begin{split}
H_z(t,y) - H_z(t,x) & = \spint{x}{y}{\text{sgn}(z) G_{yy}(t,z + F(t,a)) \indic{z \neq 0}}{\nu(z)} \\
& \; \;\; + \left[ G_y(t,F(t,a)+) + G_y(t,F(t,a)-) \right] \delta_0(z) \\
& = \spint{x}{y}{\text{sgn}(z) G_{yy}(t,z + F(t,a))}{\nu(z)} \\
& \; \;\; + 2 \left[ G_y(t,F(t,a)+) \right] \delta_0(z).
\end{split}
\end{equation}
After using the Tanaka formula and expansion of $X_t = G(t,Y_t)$, the result follows.
\end{proof}
\begin{remark}
The above Lemma \ref{ltlemma} holds also when $G_y$ satisfies (\ref{density4}), with obvious modifications to the proof.
\end{remark}
Now that we have established the bijective correspondence of Lemma \ref{equivlemma}, we can prove the main existence and uniqueness result.
\begin{theorem} \label{existenceuniqueness}
Given the previous assumptions on $\sigma,h$ and $\nu$,  we have the existence of a unique strong solution for the equation (\ref{sdelt}).
\end{theorem}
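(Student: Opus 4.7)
The plan is to reduce the problem to a classical Itô SDE via Lemma \ref{equivlemma}, then combine weak existence, a Nakao-type pathwise uniqueness argument, and the Yamada-Watanabe theorem. Writing $\tilde{b}(s,y) := F_{t-}\circ \tilde{G}(s,y)$ and $\tilde{\sigma}(s,y) := (F_x \, \sigma)\circ \tilde{G}(s,y)$, Lemma \ref{equivlemma} tells us that $X$ solves (\ref{sdelt}) (with $b=0$) if and only if $Y = F(\cdot,X)$ solves
\begin{equation*}
Y_t = Y_0 + \int_0^t \tilde{b}(s,Y_s)\,\mathrm{d}s + \int_0^t \tilde{\sigma}(s,Y_s)\,\mathrm{d}B_s.
\end{equation*}
Since $\tilde{G}$ is a bijective time-dependent transformation with continuous inverse, it suffices to establish weak existence and pathwise uniqueness for this transformed equation; the unique strong solution obtained via Yamada-Watanabe can then be pushed back through $G$ to recover the unique strong solution of (\ref{sdelt}).

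For weak existence, I would use the factorisation $F_x = \exp(\zeta)\psi$. Boundedness of $h$ and finiteness of $\nu$ give a uniform bound on $\zeta$, while the assumption $|h(s,a)\nu(\{a\})| < 1/2$ shows that the infinite product defining $\psi$ converges and is bounded above and bounded away from zero on any compact set, uniformly in $t$. Combined with the uniform ellipticity $\sigma \geq \epsilon > 0$, this makes $\tilde{\sigma}$ bounded and bounded below by a positive constant on compacts. The drift $\tilde{b}$, obtained by differentiating $\zeta$ and $\psi$ in $t$ under the integral sign in (\ref{spacetransform}), is locally bounded and measurable. After a routine localisation argument to handle any linear growth of $\tilde{b}$, weak existence and uniqueness in law follow from Corollary IX.1.14 of \cite{revuz04}.

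The main obstacle is pathwise uniqueness for the transformed equation, which I would obtain by verifying a Nakao-type estimate for $\tilde{\sigma}$. Starting from (\ref{sigmanakao}) applied to $\sigma$, and using that $x \mapsto \psi(t,x)$ is of bounded variation (with total variation controlled uniformly in $t$ by $\exp(|\nu|(\mathbb{R}))$ thanks to the $1/2$-bound on $h\nu(\{a\})$) while $x \mapsto \zeta(t,x)$ is absolutely continuous with bounded density against $\nu^c$, a telescoping bound yields
\begin{equation*}
\bigl(\tilde{\sigma}(t,y) - \tilde{\sigma}(t,x)\bigr)^2 \leq \bigl|\tilde{\rho}(y) - \tilde{\rho}(x)\bigr|
\end{equation*}
for a strictly increasing $\tilde{\rho}$ assembled from $\rho \circ G(t,\cdot)$, the cumulative variation of $\psi \circ \tilde{G}$, and the spatial integral of $|h| \circ \tilde{G}$ against $\nu^c$, with a small multiple of the identity added to guarantee strict monotonicity. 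The delicate point, and the one requiring the most care, is the construction of $\tilde\rho$ independent of $t$, which relies on the uniform-in-$t$ bounds obtained above.

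Once this is in place, the Nakao-Le Gall pathwise uniqueness criterion, as proved in \cite{legall832} (with boundedness of $\tilde{b}$ absorbed by the standard comparison or Girsanov-type adaptation of Le Gall's argument), delivers pathwise uniqueness for (\ref{sdetransformed}). Weak existence plus pathwise uniqueness gives, by Yamada-Watanabe, a unique strong solution $Y$ of the transformed equation; Lemma \ref{equivlemma} then ensures that $X_t = G(t,Y_t)$ is the unique strong solution of (\ref{sdelt}), completing the proof.
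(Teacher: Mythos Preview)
Your overall architecture---reduction via Lemma~\ref{equivlemma}, weak existence from uniform ellipticity, Nakao--Le Gall pathwise uniqueness, then Yamada--Watanabe---matches the paper exactly. The divergence is in how pathwise uniqueness is verified.

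You aim to produce a \emph{time-independent} increasing function $\tilde\rho$ with $(\tilde\sigma(t,y)-\tilde\sigma(t,x))^2 \le |\tilde\rho(y)-\tilde\rho(x)|$, so that the classical Nakao criterion applies off the shelf. This is precisely the delicate point you flag, and it is not clear it can be done under the stated hypotheses. The obstruction is that atoms of $\nu$ produce jumps of $F_x(t,\cdot)$ at fixed spatial locations $a$, but in the $y$-coordinates these jumps sit at $F(t,a)$, which moves with $t$. Likewise the composition $\rho\circ G(t,\cdot)$ carries genuine $t$-dependence that uniform bi-Lipschitz bounds on $G$ do not remove, since $\rho$ is an arbitrary increasing function. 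So the ``assembled from $\rho\circ G(t,\cdot)$ \dots'' construction you sketch does not obviously yield a single increasing $\tilde\rho$.

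The paper sidesteps this by \emph{not} seeking a time-independent bound in the $y$-variable. It records only
\[
\bigl|(F_x\sigma)\circ\tilde G(t,y)-(F_x\sigma)\circ\tilde G(t,x)\bigr|^2 \le \bigl|\tilde\rho\circ G(t,y)-\tilde\rho\circ G(t,x)\bigr|,
\]
with $\tilde\rho$ fixed but the right-hand side still $t$-dependent through $G$, and then verifies Le Gall's local-time criterion $\mathbb E\!\int_0^t \mathbbm 1_{\{Z_s>0\}}Z_s^{-1}\,\mathrm d\langle Z,Z\rangle_s<\infty$ directly. The trick is to pass back to the $X$-variables $\bar X_s=G(s,\bar Y_s)$, $X_s=G(s,Y_s)$, interpolate $Z^u=X+u(\bar X-X)$, approximate $\tilde\rho$ by smooth $f_n$, and apply the occupation-time formula together with Lemma~\ref{ltlemma} to bound $\mathbb E[\ell^a_t(Z^u)]$ uniformly. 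This keeps the Nakao-type increment control in the natural $x$-coordinates where it is available, and absorbs the time dependence into the uniform local-time estimate. If you cannot make the time-independent $\tilde\rho$ explicit, this is the argument you need to supply.
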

\begin{proof}
By the one-to-one correspondence established in Lemma \ref{equivlemma}, this reduces to considering existence and uniqueness for the equation (\ref{sdetransformed}). We prove the existence of a weak solution and then pathwise uniqueness of solutions. Using the well-known argument of Yamada and Watanabe, these two things together imply the existence of a unique strong solution.

As there is some $\epsilon > 0$ such that $\sigma \geq \epsilon$, and $F_x$ is bounded below by a positive constant due to finiteness of $\nu$, we see that $F_{x } \sigma > \delta$ for some $\delta > 0$. Thus weak existence and uniqueness holds for (\ref{sdelt}).

By (\ref{sigmanakao}) and direct examination of the expression for $F_x$, we see there is some $\tilde{\rho}:\mathbb{R} \to \mathbb{R}$ such that
\begin{equation}
\left| (F_x \sigma) (t,y) -  (F_x \sigma) (t,x) \right|^2 \leq \left| \tilde{\rho} (y) - \tilde{\rho} (x) \right|
\end{equation}
for all $(x,y) \in  \mathbb{R}^2$. Then it follows that
\begin{equation}
\left| (F_x \sigma) \circ \tilde{G} (t,y) -  (F_x \sigma) \circ \tilde{G} (t,x) \right|^2 \leq \left| \tilde{\rho} \circ G(t,y) - \tilde{\rho} \circ G(t,x) \right|,
\end{equation}
for all $(t,x,y) \in \mathbb{R}_+ \times \mathbb{R}^2$. By the theory of \cite{legall832}, it suffices to show that
\begin{equation}
\mathbb{E} \left[ \spint{0}{t}{\indic{Z_s > 0} Z_s^{-1} }{\! \left< Z,Z \right>_s} \right] < \infty,
\end{equation}
where $Z = \bar{Y} - Y$ for any two solutions $\bar{Y},Y$ of (\ref{sdetransformed}). We see 
\begin{equation}
\begin{split}
\mathbb{E} \left[ \spint{0}{t}{\indic{Z_s > \delta} Z_s^{-1} }{\! \left< Z,Z \right>_s} \right]& \leq \\& \hspace{-20pt} \mathbb{E} \left[ \spint{0}{t}{\indic{Z_s > \delta} Z_s^{-1} \left(\tilde{\rho} \circ G (s,\bar{Y}_s) - \tilde{\rho} \circ G (s,Y_s) \right) }{s} \right].
\end{split}
\end{equation}
Define processes $\bar{X}$ and $X$ by $\bar{X}_s = G(s,\bar{Y}_s)$ and $X_s = G(s,Y_s)$. Take $Z^u = X + u (\bar{X} - X)$ and note that $Z^u$ is a semimartingale. Take a bounded sequence of increasing $C^\infty$ functions $f_n$ which converge pointwise to $f$ except possibly on the discontinuity set of $F$. Then note that
\begin{equation} \label{rabbit1}
\begin{split}
&\mathbb{E} \left[ \spint{0}{t}{\indic{Z_s > \delta} Z_s^{-1} \left( \tilde{\rho}(\bar{X}_s) - \tilde{\rho}(X_s) \right) }{s} \right] \leq K \mathbb{E} \left[ \spint{0}{t}{ \spint{0}{1}{\frac{\partial f_n}{\partial x} (Z^u_s)  }{u} }{s} \right],
\end{split}
\end{equation}
where $K$ is the Lipschitz constant of $G$ in the space variable. We employ the occupation time formula with respect to $Z^u$ to see
\begin{equation}
\mathbb{E} \left[ \spint{0}{1}{ \spint{0}{t}{\frac{\partial f_n}{\partial x} (Z^u_s)  }{s} }{u} \right] \leq \frac{1}{\epsilon^2} \spint{0}{1}{ \spint{\mathbb{R}}{}{ \frac{\partial f_n}{\partial x} (a)  \mathbb{E} \left[ \lt{a}{t}(Z^u) \right] }{a} }{u} .
\end{equation}
We note that, using Lemma \ref{ltlemma}, $\mathbb{E} \left[ \lt{a}{t}(Z^u) \right]$ is uniformly bounded in $(a,u)$ by some constant $C_t < \infty$, and thus the expression (\ref{rabbit1}) is bounded by
\begin{equation}
\frac{2KC}{\epsilon^2} \max_{a} \left| f(a) \right|.
\end{equation}
The conclusion follows after letting $n \to \infty$ and $\delta \to 0$, noting that the discontinuity set of $f$ is at most countable.
\end{proof}

The analogue of (\ref{sdelt}) obtained by replacing the left local time with symmetric local time appears in the literature, and may also produce a more realistic model. We employ the following trick, originally demonstrated in \cite{bass05}, to transfer our results to the symmetric case without repeating the original analysis.
\begin{lemma}
Assume that $X$ solves the SDE (\ref{sdelt}). Then the symmetric local time $\tilde{\lt{}{}}$ is related to the right local time by the relation
\begin{equation}
\tilde{\lt{a}{t}} = \spint{0}{t}{\Big[ 1 - h(s,a) \nu(\left\{ a \right\}) \Big]}{_s \lt{a}{s}(X)}.
\end{equation}
\end{lemma}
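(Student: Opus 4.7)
The plan is to derive the identity by comparing Tanaka's formula for $\lt{a}{t}(X)$ against its symmetric analogue, then substituting the bounded variation decomposition of $X$ given by (\ref{sdelt}). First, writing Tanaka in the paper's convention ($\text{sgn}(0) = -1$) and in the symmetric form (with $\text{sgn}_s(0) = 0$), subtraction yields
\begin{equation*}
\tilde{\lt{a}{t}} - \lt{a}{t}(X) = \spint{0}{t}{\bigl[ \text{sgn}(X_s-a) - \text{sgn}_s(X_s-a) \bigr]}{X_s} = - \spint{0}{t}{\indic{X_s = a}}{X_s}.
\end{equation*}
The stochastic integral against the local martingale part $M$ of $X$ vanishes, since $\int_0^t \indic{X_s = a}\, \mathrm{d} \! \left< M,M\right>_s = \int_{\{a\}} \lt{b}{t}(X)\, \mathrm{d}b = 0$ by the occupation time formula (the singleton has zero Lebesgue measure, and $\left< M,M\right> = \left< X,X\right>$). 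Hence only the bounded variation part $A$ of $X$ contributes, giving $\tilde{\lt{a}{t}} = \lt{a}{t}(X) - \spint{0}{t}{\indic{X_s = a}}{A_s}$.

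Next I would substitute the decomposition from (\ref{sdelt}), where $A$ is the sum of a drift $\int_0^\cdot b(s,X_s)\,\mathrm{d}s$ and a local-time term $\spint{\mathbb{R}}{}{\,\spint{0}{\cdot}{h(s,b)}{_s \lt{b}{s}(X)}}{\nu(b)}$. The drift contribution vanishes because $\sigma \geq \epsilon > 0$ together with the occupation time formula gives $\spint{0}{t}{\indic{X_s = a}}{s} \leq \epsilon^{-2} \spint{0}{t}{\indic{X_s = a}}{\! \left< X,X \right>_s} = 0$. For the remaining term, Fubini yields
\begin{equation*}
\spint{0}{t}{\indic{X_s = a}}{A_s} = \spint{\mathbb{R}}{}{\spint{0}{t}{\indic{X_s = a} h(s,b)}{_s \lt{b}{s}(X)}}{\nu(b)},
\end{equation*}
and the crucial observation is that $\mathrm{d}_s \lt{b}{s}(X)$ is concentrated on $\{s : X_s = b\}$; thus the inner integral vanishes unless $b = a$, while at $b = a$ the indicator equals $1$ on the support of $\mathrm{d}_s \lt{a}{s}(X)$, leaving $\spint{0}{t}{h(s,a)}{_s \lt{a}{s}(X)}$. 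Hence the outer $\nu$-integral collapses to $\nu(\{a\})\spint{0}{t}{h(s,a)}{_s \lt{a}{s}(X)}$, and assembling everything produces
\begin{equation*}
\tilde{\lt{a}{t}} = \lt{a}{t}(X) - \nu(\{a\})\spint{0}{t}{h(s,a)}{_s \lt{a}{s}(X)} = \spint{0}{t}{\bigl[1 - h(s,a)\nu(\{a\})\bigr]}{_s \lt{a}{s}(X)},
\end{equation*}
as claimed.

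The main obstacle I anticipate is rigorously justifying the collapse of the double integral above: one needs Fubini for the interaction of the deterministic Radon measure $\nu$ with the pathwise Stieltjes local-time measures $\mathrm{d}_s \lt{b}{s}$, together with a careful formalisation of the support property of local time uniformly across values of $b$. Splitting $\nu$ into its atomic and continuous parts should handle this cleanly, since the continuous part cannot see the measure-zero fibre $\{b = a\}$ while the atomic part reduces to a (countable) sum over atoms of $\nu$, only the one at $a$ contributing.
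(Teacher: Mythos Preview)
Your argument is correct and follows essentially the same route as the paper: both compute $\tilde{\lt{a}{t}} - \lt{a}{t}(X) = -\spint{0}{t}{\indic{X_s = a}}{A_s}$ (the paper obtains this by citing the Revuz--Yor identity $\lt{a}{t} - \lt{a-}{t} = 2\spint{0}{t}{\indic{X_s = a}}{V_s}$ directly, whereas you re-derive it by subtracting two Tanaka formulae), then substitute the SDE and use the support property of $\mathrm{d}_s\lt{b}{s}$ to collapse the $\nu$-integral. Your inclusion of the drift term is a small addition; the paper omits it, consistent with the $b=0$ setting in force at that point.
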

\begin{proof}
By definition, the symmetric local time is
\begin{equation} \label{symdef}
\tilde{\lt{a}{t}} = \frac{1}{2} \left( \lt{a}{t} + \lt{a-}{t} \right).
\end{equation}
For any semimartingale $Z$, with canonical decomposition $V + M$ into an adapted process of bounded variation and a local martingale respectively, we see from \cite[Thm. VI 1.7]{revuz04} that
\begin{equation}
\lt{a}{t} - \lt{a-}{t} = 2 \spint{0}{t}{\indic{Z_s = a}}{V_s}.
\end{equation}
Using the SDE (\ref{sdelt}), we see this is
\begin{equation}
\begin{split}
\lt{a}{t} - \lt{a-}{t} &= 2\spint{\mathbb{R}}{}{\spint{0}{t}{h(s,z)\indic{X_s = a}}{_s \lt{z}{s}(X)}}{\nu(z)} \\ &= 2\spint{\{a\}}{}{\spint{0}{t}{h(s,z)}{_s \lt{z}{s}(X)}}{ \nu(z)} = 2\spint{0}{t}{h(s,a) \nu(a)}{_s \lt{a}{s}(X)}.
\end{split}
\end{equation}
The conclusion then follows by substituting this expression into (\ref{symdef}).
\end{proof}

We conclude by including a classical drift term. Assume now that $\sigma, \nu$ and $h$ obey all the previous assumptions. We deal with the SDE (\ref{sdelt}). Assume that $b$ and $\sigma$ satisfy the following:
\begin{enumerate}
\item $\frac{b}{\sigma^2}(t,\cdot) \in L^1$ for each $t$;
\item $t \mapsto b(t,a)$ is $C^1$ for each fixed $a$, with $b_t(t,a)$ left continuous and admitting left limits in space;
\item $t \mapsto \sigma(t,a)$ is $C^1$ for each fixed $a$, with $\sigma_t(t,a)$ left continuous and admitting left limits in space;
\end{enumerate}

\begin{theorem} \label{driftexistenceuniqueness}
Given the previous assumptions on $\sigma,b,h$ and $\nu$, we have existence and uniqueness for the equation (\ref{sdelt}).
\end{theorem}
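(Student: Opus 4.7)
The plan is to extend the bijective transformation of Lemma \ref{equivlemma} to accommodate the drift, reducing (\ref{sdelt}) to a standard one-dimensional SDE whose drift is bounded and whose diffusion satisfies the same Nakao-type inequality already verified in the proof of Theorem \ref{existenceuniqueness}. Set $Y_t = F(t,X_t)$ with $F$ as in (\ref{spacetransform}) and apply Theorem \ref{seconditoform} using representation (\ref{iterated2}). The cancellation between the local-time-space integral of $F_x$ and the local-time component of $\spint{0}{t}{F_x(s,X_s)}{X_s}$ depends only on the structure of $F$ and is therefore unaffected by the drift, exactly as in the forward direction of Lemma \ref{equivlemma}. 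The drift of $X$ contributes the additional term $\spint{0}{t}{F_x(s,X_s)\,b(s,X_s)}{s}$, so $Y$ satisfies
\begin{equation} \label{newsde}
Y_t = Y_0 + \spint{0}{t}{[(F_{t-}+F_x b)\circ\tilde G](s,Y_s)}{s} + \spint{0}{t}{[(F_x\sigma)\circ\tilde G](s,Y_s)}{B_s}.
\end{equation}
The hypothesis that $b_t$ be left continuous in time with left limits in space, together with the earlier hypotheses on $h$, supplies the regularity needed for Theorem \ref{seconditoform} to apply. The converse direction, that any solution of (\ref{newsde}) pulls back to a solution of (\ref{sdelt}) via $X = G(\cdot,Y)$, is handled exactly as in Lemma \ref{equivlemma} using Lemma \ref{ltlemma}.

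Write $\tilde b := (F_t+F_x b)\circ\tilde G$ and $\tilde\sigma := (F_x\sigma)\circ\tilde G$. Then $\tilde b$ is bounded (each factor is) while $\tilde\sigma \geq \delta > 0$ and $\tilde\sigma$ inherits the Nakao inequality from the proof of Theorem \ref{existenceuniqueness}. Weak existence for (\ref{newsde}) follows by taking a weak solution of its driftless version, which is supplied by Theorem \ref{existenceuniqueness} transported through $F$, and applying a Girsanov change of measure with the bounded exponent $(\tilde b/\tilde\sigma)(s,Y_s)$; the Novikov condition holds trivially by boundedness. For pathwise uniqueness, given two solutions $Y^1,Y^2$ driven by the same Brownian motion, the difference $Z = Y^2 - Y^1$ has quadratic variation $\spint{0}{t}{|\tilde\sigma(s,Y^2_s) - \tilde\sigma(s,Y^1_s)|^2}{s}$, which by the Nakao bound is controlled exactly as in the proof of Theorem \ref{existenceuniqueness}, giving $\lt{0}{t}(Z) = 0$. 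The bounded drift enters only through the absolutely continuous finite-variation part of $Z$ and does not affect this step, so the one-dimensional Le Gall--Nakao scheme of \cite{legall832} closes to force $Z \equiv 0$. Yamada--Watanabe then upgrades weak existence and pathwise uniqueness for (\ref{newsde}) to a unique strong solution, which transfers back to (\ref{sdelt}) through the bijection.

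The main obstacle is closing pathwise uniqueness when $\tilde b$ is only bounded and measurable; no Lipschitz estimate is available to invoke Gronwall's inequality directly on $\mathbb{E}[|Z_t|]$ after a Tanaka expansion. The resolution rests on two observations: first, the drift lives in the finite-variation part of $Z$ and therefore does not enter the quadratic-variation-based proof that $\lt{0}{t}(Z) = 0$; second, once the local time at $0$ of the difference vanishes, the remaining comparison can be handled either via a Zvonkin-type substitution or directly through the one-dimensional techniques of \cite{legall832}, reducing to the driftless Nakao framework already employed in Theorem \ref{existenceuniqueness}. The $C^1$-in-time hypothesis on $b$ is precisely what is needed for Theorem \ref{seconditoform} to produce the clean transformed equation (\ref{newsde}) in the first place.
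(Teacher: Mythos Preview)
Your approach differs substantially from the paper's, and the paper's route is considerably shorter. The paper does not carry the drift through the transformation at all. Instead, since $\sigma\ge\epsilon>0$ it rewrites the drift via the occupation time formula (\ref{otf}):
\[
\spint{0}{t}{b(s,X_s)}{s}=\spint{0}{t}{\frac{b}{\sigma^2}(s,X_s)\,\sigma^2(s,X_s)}{s}=\spint{\mathbb{R}}{}{\spint{0}{t}{\frac{b}{\sigma^2}(s,a)}{_s\lt{a}{s}}}{a},
\]
so the drift is absorbed into the local-time term by replacing $h$ with $h+b/\sigma^2$ (and augmenting $\nu$ by Lebesgue measure). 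One then invokes Theorem \ref{existenceuniqueness} directly, with no new analysis. This is precisely why the hypotheses include $b/\sigma^2(t,\cdot)\in L^1$ and $C^1$ time-regularity of both $b$ and $\sigma$: they are exactly what is needed for the modified $h$ to satisfy the standing assumptions feeding into Theorem \ref{existenceuniqueness}. Your argument never uses the $L^1$ condition on $b/\sigma^2$ or the time-regularity of $\sigma$, which is a signal that you have missed the intended reduction.

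Your route can be made to work, but the pathwise uniqueness step is not closed as written. Showing $\lt{0}{t}(Z)=0$ from the Nakao bound on $\tilde\sigma$ is fine and is indeed drift-independent. The issue is what comes next: with only a bounded measurable drift you cannot run Gronwall after Tanaka, and ``the one-dimensional techniques of \cite{legall832}'' do not by themselves force $Z\equiv0$ from $\lt{0}{t}(Z)=0$. What Le Gall's method actually uses at this point is \emph{weak uniqueness}: once $\lt{0}{t}(Y^2-Y^1)=0$, the processes $Y^1\vee Y^2$ and $Y^1\wedge Y^2$ are again solutions of (\ref{newsde}); weak uniqueness (which does hold here since $\tilde b$ is bounded and $\tilde\sigma$ is bounded and uniformly elliptic, cf.\ \cite[Cor.~IX 1.14]{revuz04}) then forces all four to share a law, whence $\mathbb{E}[Y^1\vee Y^2-Y^1\wedge Y^2]=0$ and $Y^1=Y^2$. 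You should state this explicitly rather than gesturing at a Zvonkin substitution or a ``driftless Nakao framework''. Even so, the paper's occupation-time trick is the cleaner argument and renders all of this unnecessary.
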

\begin{proof}
As $ \sigma \geq \epsilon$, we note that 
\begin{equation}
\spint{0}{t}{b(s,X_s) }{s} = \spint{0}{t}{\frac{b}{\sigma^2}(s,X_s) \cdot \sigma^2(s,X_s)}{s} = \spint{\mathbb{R}}{}{\spint{0}{t}{\frac{b}{\sigma^2}(s,a)}{_s \lt{a}{s}}}{a},
\end{equation}
by the occupation time formula. We replace $h$ by $h + b/\sigma^2$, except possibly on some set of Lebesgue measure zero in the $a$-variable, and then apply Theorem \ref{existenceuniqueness}.
\end{proof}

%Bibliography
%%%%%%%%%%%%%%%%%%%%%%%%%%%%%%%%%%%%

\bibliographystyle{../includes/napalike}
\bibliography{../includes/bib}

%%%%%%%%%%%%%%%%%%%%%%%%%%%%%%%%%%%%%%%%%%%%%%%%%%%%%%%%%%%%%%%%%%%%%%%%%%%%%%%
%%% Affiliations %%%
%%%%%%%%%%%%%%%%%%%%%%%%%%%%%%%%%%%%%%%%%%%%%%%%%%%%%%%%%%%%%%%%%%%%%%%%%%%%%%%

\par \leftskip=24pt

\vspace{12pt}

\ni Daniel Wilson \\
School of Mathematics \\
The University of Manchester \\
Oxford Road \\
Manchester M13 9PL \\
United Kingdom \\
\texttt{daniel.wilson-2@manchester.ac.uk}

\par

\end{document}